\newcommand*{\mailto}[1]{\href{mailto:#1}{\nolinkurl{#1}}}
\newcommand{\bbN}{{\mathbb{N}}}
\newcommand{\bbR}{{\mathbb{R}}}
\newcommand{\bbC}{{\mathbb{C}}}
\newcommand{\bbZ}{{\mathbb{Z}}}
\newcommand{\cB}{{\mathcal B}}
\newcommand{\cH}{{\mathcal H}}
\newcommand{\tr}{\mbox{tr}}
\newcommand{\hil}{\mathcal{H}}
\newcommand{\free}{{(0)}}
\DeclareMathOperator{\sech}{sech}
\DeclareMathOperator{\csch}{csch}
\DeclareMathOperator{\sgn}{sgn}
\DeclareMathOperator{\dist}{dist}
 \newcommand{\Rpz}{\big(H_{\ell,\varphi,R} - zI_{L^2((-\ell,\ell))}\big)^{-1}}
 \newcommand{\Rdz}{\big(H_{\ell,D} - zI_{L^2((-\ell,\ell))}\big)^{-1}}
 \newcommand{\Rpzo}{\big(H_{\ell,\varphi,R}^{(0)} - zI_{L^2((-\ell,\ell))}\big)^{-1}}
 \newcommand{\Rdzo}{\big(H_{\ell,D}^{(0)} - zI_{L^2((-\ell,\ell))}\big)^{-1}}
 \newcommand{\Rpko}{\big(H_{\ell,\varphi,R}^{(0)} +k^2I_{L^2((-\ell,\ell))}\big)^{-1}}
 \newcommand{\Rdko}{\big(H_{\ell,D}^{(0)} +k^2I_{L^2((-\ell,\ell))}\big)^{-1}}
 \newcommand{\Rrzo}{\big(H^{(0)}-zI_{L^2(\mathbb{R})} \big)^{-1}}
 \newcommand{\Rrko}{\big(H^{(0)}+k^2I_{L^2(\mathbb{R})} \big)^{-1}}
 \newcommand{\Rpio}{\big(H_{\ell,\varphi,R}^{(0)} - iI_{L^2((-\ell,\ell))}\big)^{-1}}
 \newcommand{\Rdio}{\big(H_{\ell,D}^{(0)} - iI_{L^2((-\ell,\ell))}\big)^{-1}}
  \newcommand{\Rpzbo}{\big(H_{\ell,\varphi,R}^{(0)} - \overline{z}I_{L^2((-\ell,\ell))}\big)^{-1}}
   \newcommand{\Rrzbo}{\big(H^{(0)}-\overline{z}I_{L^2(\mathbb{R})} \big)^{-1}}
\DeclareMathOperator{\dom}{dom}
\DeclareMathOperator*{\stlim}{s-lim}
\newcommand{\dott}{\,\cdot\,}
\renewcommand{\backslash}{\setminus}
\newcommand{\loc}{\operatorname{loc}}
\newcommand{\AC}{\text{\textnormal{AC}}}
\newcommand{\SL}{\text{\textnormal{SL}}}
\newcommand{\no}{\notag}
\newcommand{\lb}{\label}
\newcommand{\ol}{\overline}
\newcommand{\bi}{\bibitem}
\newcommand{\opl}{\oplus_{\ell}}
\def\theequation{\@arabic\c@equation}
\numberwithin{equation}{section}
\newtheorem{theorem}{Theorem}[section]
\newtheorem{proposition}[theorem]{Proposition}
\newtheorem{lemma}[theorem]{Lemma}
\newtheorem{corollary}[theorem]{Corollary}
\newtheorem{hypothesis}[theorem]{Hypothesis}
\newtheorem{example}[theorem]{Example}
\theoremstyle{definition}
\theoremstyle{remark}
\newtheorem{remark}[theorem]{Remark}
\begin{document}
 
\title[Weak Convergence of Spectral Shift Functions Revisted]{Weak Convergence of Spectral Shift Functions Revisited}
 
%%%%%%%%%%%%%%%%%%%%%%%%%%%%%%%%%%%%%%%%%%
%%%%%%%%%%%%%%%%%%%%%%%%%%%%%%%%%%%%%%%%%%
% Author Information
%%%%%%%%%%%%%%%%%%%%%%%%%%%%%%%%%%%%%%%%%%
%%%%%%%%%%%%%%%%%%%%%%%%%%%%%%%%%%%%%%%%%%

\author[C.\ Connard]{Carson Connard}
\address{Department of Mathematics,
Kansas State University, 
1228 N. Martin Luther King Jr Dr., Manhattan, KS 66506, USA}
\email{\mailto{carson30@ksu.edu}}
\author[B.\ Ingimarson]{Benjamin Ingimarson}
\address{Department of Mathematical Sciences,
Carnegie Mellon University, 5000 Forbes Avenue,
Pittsburgh, PA 15289, USA}
\email{\mailto{bwi@andrew.cmu.edu}}
\author[R.\ Nichols]{Roger Nichols}
\address{Department of Mathematics (Dept.~6956), The University of Tennessee at Chattanooga, 615 McCallie Ave., Chattanooga, TN 37403, USA}
\email{\mailto{Roger-Nichols@utc.edu}}
%\email{Roger-Nichols@utc.edu}
\urladdr{\url{https://sites.google.com/mocs.utc.edu/rogernicholshomepage/home}}
\author[A.\ Paul]{Andrew Paul}
\address{Department of Mathematics;
University of California, San Diego;
9500 Gilman Drive, La Jolla, CA 92093-0112, USA}
\email{\mailto{anp004@ucsd.edu}}
\urladdr{\url{https://anpaul.weebly.com}}
 
%%%%%%%%%%%%%%%%%%%%%%%%%%%%%%%%%%%%%%%%%%
%%%%%%%%%%%%%%%%%%%%%%%%%%%%%%%%%%%%%%%%%%
%%%%%%%%%%%%%%%%%%%%%%%%%%%%%%%%%%%%%%%%%%
%%%%%%%%%%%%%%%%%%%%%%%%%%%%%%%%%%%%%%%%%%
 
\date{\today}
 
%\dedicatory{Dedicated to...}
%\thanks{To appear in {\it Journal Name Here}.}
\@namedef{subjclassname@2020}{\textup{2020} Mathematics Subject Classification}
\subjclass[2020]{Primary 34L40, 35J10; Secondary 34B24, 47B25, 47E05.}
\keywords{Spectral shift function, Schr\"odinger operator, Sturm--Liouville operator, coupled boundary conditions, weak convergence.}

%%%%%%%%%%%%%%%%%%%%%%%%%%%%%%%%%%%%%%%%%%
%%%%%%%%%%%%%%%%%%%%%%%%%%%%%%%%%%%%%%%%%%
\begin{abstract}
We study convergence of the spectral shift function for the finite interval restrictions of a pair of full-line Schr\"odinger operators to an interval of the form $(-\ell,\ell)$ with coupled boundary conditions at the endpoints as $\ell\to \infty$ in the case when the finite interval restrictions are relatively prime to those with Dirichlet boundary conditions.  Using a Krein-type resolvent identity we show that the spectral shift function for the finite interval restrictions converges weakly to that for the pair of full-line Schr\"odinger operators as the length of the interval tends to infinity.
\end{abstract}
%%%%%%%%%%%%%%%%%%%%%%%%%%%%%%%%%%%%%%%%%%
%%%%%%%%%%%%%%%%%%%%%%%%%%%%%%%%%%%%%%%%%%

\maketitle

%%%%%%%%%%%%%%%%%%%%%%%%%%%%%%%
%%%%%%%%%%%%%%%%%%%%%%%%%%%%%%%
\section{Introduction} \lb{s1}
%%%%%%%%%%%%%%%%%%%%%%%%%%%%%%%
%%%%%%%%%%%%%%%%%%%%%%%%%%%%%%%

It is a classical fact that the one-dimensional self-adjoint Schr\"odinger operators $H_0=-d^2/dx^2$ and $H=-d^2/dx^2 + V$, with $V$ real-valued and Lebesgue integrable on $\bbR$, are resolvent comparable in the sense that their resolvent difference
\begin{equation}
\big(H-zI_{L^2(\bbR)}\big)^{-1} - \big(H_0-zI_{L^2(\bbR)}\big)^{-1}
\end{equation}
belongs to the trace class for all $z\in \bbC\setminus \bbR$.  Consequently, there is a corresponding unique real-valued spectral shift function $\xi(\dott;H,H_0)$ defined a.e.~on $\bbR$ that vanishes in a neighborhood of $-\infty$ and satisfies
\begin{equation}
\int_{-\infty}^{\infty}\frac{|\xi(\lambda;H,H_0)|}{1+\lambda^2}\,\rm{d}\lambda < \infty.
\end{equation}
Moreover, for a wide class of functions $f$ (see \eqref{A.24}--\eqref{A.25}), Krein's trace formula holds:
\begin{equation}
\tr_{L^2(\bbR)}{(f(H)-f(H_0))} = \int_{-\infty}^{\infty}f'(\lambda)\xi(\lambda;H,H_0)\,\rm{d}\lambda,
\end{equation}
where $\tr_{L^2(\bbR)}{(\dott)}$ denotes the trace functional.

When $H_0$ and $H$ are restricted to a finite interval, say $(-\ell,\ell)$ with $\ell\in \bbN$, to obtain the finite interval Schr\"odinger operators $H_{0,\ell}$ and $H_{\ell}$ (imposing a fixed common choice of self-adjoint boundary conditions at the endpoints $\pm \ell$, thereby ensuring the self-adjointness of $H_{0,\ell}$ and $H_{\ell}$), the finite interval restrictions $H_{0,\ell}$ and $H_{\ell}$ are also resolvent comparable, so a unique real-valued spectral shift function $\xi(\dott;H_{\ell},H_{\ell,0})$ with properties analogous to those of $\xi(\dott;H,H_0)$ exists.  A problem of interest, dating back at least to the work of Borovyk and Makarov \cite{BM12} (see also \cite{Bo08} and the much earlier, related work of Kirsch \cite{Ki87}) has been to study the modes of convergence of $\xi(\dott;H_{\ell},H_{0,\ell})$ to $\xi(\dott;H,H_0)$ as $\ell\to \infty$.  Since $\xi(\dott;H_{\ell},H_{0,\ell})$ is integer-valued, owing to the discrete nature of the spectra of $H_{\ell}$ and $H_{0,\ell}$, and $\xi(\dott;H,H_0)$ is continuous and nonconstant on $(0,\infty)$, pointwise convergence of $\xi(\dott;H_{\ell},H_{0,\ell})$ to $\xi(\dott;H,H_0)$ as $\ell\to \infty$ is immediately dismissed as impossible.  Borovyk and Makarov showed that a certain averaging is needed in order to obtain convergence.  More specifically, in \cite{BM12},  Borovyk and Makarov proved that when Dirichlet boundary conditions are imposed at the endpoints of the finite intervals, the sequence $\xi(\dott;H_{\ell},H_{0,\ell})$ converges to $\xi(\dott;H,H_0)$ weakly in the sense that
\begin{equation}\lb{1.4}
\lim_{\ell\to \infty}\int_{-\infty}^{\infty} g(\lambda)\xi(\lambda;H_{\ell},H_{0,\ell})\,\textrm{d}\lambda = \int_{-\infty}^{\infty}g(\lambda)\xi(\lambda;H,H_0)\,\textrm{d}\lambda,\quad g\in C_c(\bbR),
\end{equation}
where $C_c(\bbR)$ denotes the set of compactly supported continuous complex-valued functions on $\bbR$.  To be as precise as possible, the finite intervals considered in \cite{BM12} are of the form $(0,\ell)$ and the infinite interval is the half-line $(0,\infty)$, but the arguments given in \cite{BM12} may be modified to treat the symmetric intervals $(-\ell,\ell)$ and the line $\bbR$.  Using an abstract approach based on modified Fredholm determinants, \eqref{1.4} was extended to all separated self-adjoint boundary conditions in \cite{GN12} (the abstract approach is separately developed in \cite{GN12b}).  Actually, in \cite{GN12}, \eqref{1.4} was improved to
\begin{equation}\lb{1.5}
\lim_{\ell\to \infty}\int_{-\infty}^{\infty}h(\lambda)\frac{\xi(\lambda;H_{\ell},H_{0,\ell})}{1+\lambda^2}\,\textrm{d}\lambda = \int_{-\infty}^{\infty}h(\lambda)\frac{\xi(\lambda;H,H_0)}{1+\lambda^2}\,\textrm{d}\lambda,\quad h\in C_b(\bbR),
\end{equation}
where $C_b(\bbR)$ denotes the set of bounded continuous complex-valued functions on $\bbR$.  However, \cite{GN12} does not discuss coupled self-adjoint boundary conditions at $\pm \ell$, which are of the form
\begin{equation}\lb{1.6}
\begin{pmatrix}
y(\ell)\\
y'(\ell)
\end{pmatrix}=
e^{i\varphi}R
\begin{pmatrix}
y(-\ell)\\
y'(-\ell)
\end{pmatrix}
\end{equation}
for a fixed parameter $\varphi\in[0,\pi)$ and a fixed matrix $R = (R_{j,k})_{1\leq j,k\leq 2}\in \bbR^{2\times 2}$ with $\det{(R)}=1$.  In \cite{MN17}, \eqref{1.5} was shown to hold for a special subclass of the coupled boundary conditions \eqref{1.6}, namely those for which $R_{1,2}=0$, so that $R$ has the special form
\begin{equation}\lb{1.7}
R=
\begin{pmatrix}
a & 0\\
b & a^{-1}
\end{pmatrix}
\end{equation}
for some $a,b\in \bbR$ with $a\neq 0$.  The approach taken in \cite{MN17} is based on an analysis of the coefficient in the Krein identity connecting the resolvent of the finite interval restriction with the boundary conditions \eqref{1.6}--\eqref{1.7} to the resolvent of the finite interval restriction with Dirichlet boundary conditions.  In the case of \eqref{1.7}, the difference of the resolvents of the coupled and Dirichlet restrictions is a rank one operator, so the Krein identity contains exactly one coefficient.  The work in \cite{MN17} leaves open the question of convergence---in particular, the validity of \eqref{1.5}---when $R_{1,2}\neq 0$.

For $R_{1,2}\neq 0$, the situation is slightly more delicate as the resolvent difference is then a rank two operator and the corresponding Krein identity is more easily understood in terms of four coefficients, see Theorem \ref{t2.6} below and \cite[Theorem 3.2 $(i)$]{CGNZ14}.  Our main aim here is to extend \eqref{1.5} to the case $R_{1,2}\neq 0$.  To do this, we carefully analyze the $\ell\to \infty$ behavior of the four coefficients in the Krein identity and use the abstract criteria for convergence of spectral shift functions developed in \cite{GN12b} to extend \eqref{1.5} to the case $R_{1,2}\neq 0$.  When combined with \cite{GN12} and \cite{MN17}, the results we obtain here show that \eqref{1.5} holds for all self-adjoint boundary conditions.

In Section \ref{s2}, we rigorously define the full-line Schr\"odinger operators $H$ and $H_0$ and their finite interval counterparts and recall some of their basic properties.  We also recall Krein's resolvent identity and separately investigate the behavior of its coefficients with respect to both the finite interval length and the spectral parameter.  The key results needed to invoke the abstract criteria from \cite{GN12} for convergence of spectral shift functions---in particular, convergence of various Birman--Schwinger-type operators in the trace and Hilbert--Schmidt classes---are established in Section \ref{s3}.  Finally, in Section \ref{s4}, we combine the results from Section \ref{s3} with the abstract convergence criteria from \cite{GN12b} to obtain our main result on convergence of spectral shift functions corresponding to coupled boundary conditions with $R_{1,2}\neq 0$.  For completeness, Appendix \ref{sA} recalls the convergence criteria from \cite{GN12b} in a form that is suitably tailored to the application required in Section 4.

\medskip

\noindent
\textbf{Notation:}  If $A \in \bbC^{m \times n}$ for some $m,n\in \bbN$, then the $(j,k)$-entry of $A$ is denoted by $A_{j,k}$. $\SL_2{(\bbR)}$ denotes the special linear group of order $2$ (i.e., the set of all matrices in $\bbR^{2\times2}$ with determinant equal to one).  If $[a,b]\subseteq \bbR$, then $\AC([a,b])$ denotes the set of all functions $f\colon [a,b]\to \bbR$ that are absolutely continuous.  If $(a,b)\subseteq \bbR$, then $\AC_{\loc}{((a,b))}$ denotes the set of all $f\colon (a,b)\to \bbC$ that are locally absolutely continuous on $(a,b)$ (i.e., absolutely continuous on every compact subinterval of $(a,b)$).  The symbol ``a.e.'' abbreviates the phrase ``almost everywhere (with respect to Lebesgue measure on $\bbR$),''  and the symbol ``$\coloneqq$'' means ``defined to be equal to.'' If $f\colon (a,b)\to\bbR$, then $f_{\pm} = (|f| \pm f)/2$ denote the positive and negative parts of $f$, respectively.  If $T\colon \dom{(T)}\subseteq \cH\to \cH$ is a closable linear operator in the Hilbert space $\cH$, then $\ol{T}$ denotes the closure of $T$.  If $T$ is a closed linear operator in a Hilbert space, then $\rho(T)$ and $\sigma(T)$ denote the resolvent set and spectrum of $T$, respectively.  If $J\subseteq \bbR$ is an open interval, then $H^k(J)=W^{k,2}(J)$ denotes the Sobolev space of order $k\in \bbN$.  If $\cH$ is a separable complex Hilbert space, then $\cB(\cH)$ and $\cB_p(\cH)$, $p\in [1,\infty)$, denote the set of bounded linear operators and the $\ell^p$-based Schatten--von Neumann trace ideals on $\cH$, respectively.  $C_c(\bbR)$, $C_b(\bbR)$, and $C_{\infty}(\bbR)$ denote the sets of continuous complex-valued functions on $\bbR$ that are compactly supported, bounded, and converge to zero at $\pm \infty$, respectively.

%%%%%%%%%%%%%%%%%%%%%%%%%%%%%%%
%%%%%%%%%%%%%%%%%%%%%%%%%%%%%%%
\section{One-Dimensional Schr\"odinger Operators} \lb{s2}
%%%%%%%%%%%%%%%%%%%%%%%%%%%%%%%
%%%%%%%%%%%%%%%%%%%%%%%%%%%%%%%

We begin by introducing the following hypothesis which shall be assumed throughout.

%%%%%%%%%%%
\begin{hypothesis}\lb{h2.1}
$V \in L^1(\bbR)$ is real-valued a.e., $\varphi \in [0,\pi)$, and $R\in \SL_2{(\bbR)}$ with $R_{1,2} \neq 0$.
\end{hypothesis}
%%%%%%%%%%%

In turn, the full-line Schr\"odinger operator with potential $V$ is defined by
\begin{equation}\lb{2.1}
Hf \coloneqq -f'' + Vf,\quad f\in\dom{(H)} \coloneqq \big\{ g\in L^2(\bbR)\,\big|\,g,g'\in\AC_{\loc}{(\bbR)},\, -g'' + Vg \in L^2(\bbR)\big\}.
\end{equation}
Since the condition $V \in L^1(\bbR)$ implies that the differential expression $\tau \coloneqq -d^2/dx^2 + V(x)$ is in the limit point case at $\pm \infty$, the operator $H$ is self-adjoint in the Hilbert space $L^2(\bbR)$ equipped with the inner product
\begin{equation}\lb{2.2}
\langle f,g\rangle_{L^2(\bbR)} \coloneqq \int_{-\infty}^{\infty}{\overline{f(x)}g(x)\, \mathrm{d}x},\quad f,g \in L^2(\bbR).
\end{equation}
Moreover, the sesquilinear form uniquely associated to $H$ is
\begin{equation}\lb{2.3}
\mathfrak{Q}[f,g] \coloneqq \int_{-\infty}^\infty \Big[ \overline{f'(x)}g'(x) + \overline{f(x)}V(x)g(x)\Big]\, \mathrm{d}x,\quad f,g \in \dom{(\mathfrak{Q})} \coloneqq H^1(\mathbb{R}).
\end{equation}

%%%%%%%%%%%
\begin{remark}\lb{r2.2}
In the case when $V=0$ a.e.~on $\bbR$, the corresponding ``free'' Schr\"odinger operator and its sesquilinear form are denoted by $H^{\free}$ and $\mathfrak{Q}^{\free}$, respectively.\hfill $\diamond$
\end{remark}
%%%%%%%%%%%

The following fixed quantities will play an important role:
\begin{equation}
M_V\coloneqq \int_{-\infty}^{\infty}V_-(x)\,\mathrm{d}x,\quad N_R\coloneqq \frac{|R_{1,1}| + |R_{2,2}| + 2}{|R_{1,2}|}.
\end{equation}
In fact, standard estimates imply that $H$ is lower semibounded with a lower bound expressed in terms of the quantity $M_V$.

%%%%%%%%%%%
\begin{lemma}\lb{l2.3}
If Hypothesis \ref{h2.1} holds, then
\begin{equation}\lb{2.4}
\mathfrak{Q}[f,f] \geq -M_V(1+M_V)\|f\|_{L^2(\bbR)}^2,\quad f \in H^1(\bbR).
\end{equation}
In particular, if $V_-=0$ a.e.~on $\bbR$, then $H$ is nonnegative.
\end{lemma}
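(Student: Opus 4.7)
The plan is to prove the lower bound by combining the one-dimensional Sobolev embedding $H^1(\bbR)\hookrightarrow L^\infty(\bbR)$ with Young's inequality, so as to absorb the negative part of the potential energy into the kinetic term. Since $V_+\geq 0$ a.e., the assertion reduces to controlling $\int_{\bbR}V_-|f|^2\,\mathrm{d}x$ by $\|f'\|_{L^2(\bbR)}^2+C\|f\|_{L^2(\bbR)}^2$ for a suitable constant $C$.

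First I would establish the uniform pointwise estimate $|f(x)|^2 \leq 2\|f\|_{L^2(\bbR)}\|f'\|_{L^2(\bbR)}$ for every $x\in\bbR$ and every $f\in H^1(\bbR)$, by writing
$$|f(x)|^2 = \int_{-\infty}^x 2\Re\bigl(\overline{f(t)}f'(t)\bigr)\,\mathrm{d}t$$
(legitimate because the absolutely continuous $H^1$-representative of $f$ decays at $-\infty$) and then applying Cauchy--Schwarz, so that in particular $\|f\|_{L^\infty(\bbR)}^2\leq 2\|f\|_{L^2(\bbR)}\|f'\|_{L^2(\bbR)}$.

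Second, I would integrate the uniform bound against $V_-\in L^1(\bbR)$ and apply the Young inequality $2ab\leq a^2+b^2$ with $a=\|f'\|_{L^2(\bbR)}$ and $b=M_V\|f\|_{L^2(\bbR)}$:
\begin{align*}
\int_{-\infty}^\infty V_-(x)|f(x)|^2\,\mathrm{d}x &\leq M_V\|f\|_{L^\infty(\bbR)}^2 \leq 2M_V\|f\|_{L^2(\bbR)}\|f'\|_{L^2(\bbR)}\\
&\leq \|f'\|_{L^2(\bbR)}^2 + M_V^2\|f\|_{L^2(\bbR)}^2.
\end{align*}
Dropping the nonnegative $\int_{\bbR}V_+|f|^2\,\mathrm{d}x$-contribution from $\mathfrak{Q}[f,f]$ and inserting this bound yields
\begin{align*}
\mathfrak{Q}[f,f] &\geq \|f'\|_{L^2(\bbR)}^2 - \int_{-\infty}^\infty V_-(x)|f(x)|^2\,\mathrm{d}x\\
&\geq -M_V^2\|f\|_{L^2(\bbR)}^2 \geq -M_V(1+M_V)\|f\|_{L^2(\bbR)}^2,
\end{align*}
the last inequality using $M_V\geq 0$. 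If in addition $V_-=0$ a.e., then $M_V=0$ and the same chain gives $\mathfrak{Q}[f,f]\geq 0$, whence $H\geq 0$.

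There is no substantive obstacle; the whole argument rests on the one-dimensional Sobolev embedding together with an elementary Young inequality. The only point requiring a little care is the justification of the integral representation of $|f(x)|^2$ in the first step, which uses that for $f\in H^1(\bbR)$ the absolutely continuous representative of $|f|^2$ is integrable with integrable derivative, hence tends to zero at $\pm\infty$.
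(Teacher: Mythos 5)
Your argument is correct, and it even yields the slightly sharper bound $\mathfrak{Q}[f,f]\geq -M_V^2\|f\|_{L^2(\bbR)}^2$, which implies \eqref{2.4} since $M_V^2\leq M_V(1+M_V)$; the justification of the representation $|f(x)|^2=\int_{-\infty}^x 2\Re\bigl(\overline{f(t)}f'(t)\bigr)\,\mathrm{d}t$ via integrability of $|f|^2$ and of its derivative is exactly the point that needs care, and you handle it. The route differs from the paper's in its key estimate: the paper invokes the local bound $\sup_{[n,n+1]}|f|^2\leq \varepsilon\int_n^{n+1}|f'|^2+(1+\varepsilon^{-1})\int_n^{n+1}|f|^2$ (cited from Teschl), sums it over $n\in\bbZ$ against $\int_n^{n+1}V_-$, and then optimizes by choosing $\varepsilon=1/M_V$, which produces the constant $M_V(1+M_V)$; you instead use the global one-dimensional embedding $\|f\|_{L^\infty(\bbR)}^2\leq 2\|f\|_{L^2(\bbR)}\|f'\|_{L^2(\bbR)}$ together with Young's inequality to absorb $\int V_-|f|^2$ into the kinetic term. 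Your version is more elementary on the full line and gives a marginally better constant; the paper's unit-interval method has the advantage that it transfers essentially verbatim to the finite-interval forms $\mathfrak{Q}_{\ell,\varphi,R}$ in Lemma \ref{l2.4}, where decay at infinity is unavailable and the same Teschl estimate (applied to a single unit subinterval containing the maximum) again supplies the $\varepsilon$-absorption uniformly in $\ell$.
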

%%%%%%%%%%
\begin{proof}
Let $f\in H^1(\bbR)$.  Writing $V = V_+ - V_-$, one obtains
\begin{align}\lb{2.5}
\mathfrak{Q}[f,f] &= \int_{-\infty}^\infty \Big[ |f'(x)|^2 + V(x)|f(x)|^2\Big]\, \mathrm{d}x \geq \|f'\|_{L^2(\bbR)}^2 - \int_{-\infty}^{\infty}V_-(x)|f(x)|^2\, \mathrm{d}x.
\end{align}
If $M_V=0$, then $V_-=0$ a.e.~on $\bbR$ and \eqref{2.5} implies $\mathfrak{Q}[f,f] \geq 0$, which is \eqref{2.4} for $M_V=0$.  Thus, $H$ is nonnegative in this case.

If $M_V>0$, then one bounds the second integral in \eqref{2.5} from above in terms of $\|f'\|_{L^2(\bbR)}^2$ as follows.  By \cite[Lemma 9.32]{Te14}, for every $\varepsilon > 0$ and every $n \in \bbZ$,
\begin{equation}\lb{2.6}
\sup_{x\in[n,n+1]} |f(x)|^2 \leq \varepsilon\int_n^{n+1} |f'(x)|^2\, \mathrm{d}x + \left(1+\frac{1}{\varepsilon}\right)\int_n^{n+1} |f(x)|^2\, \mathrm{d}x \eqqcolon B_f(\varepsilon,n),
\end{equation}
and the scalars $B_f(\varepsilon,n)$ satisfy
\begin{align}\lb{2.7}
    \sum_{n\in\mathbb{Z}} B_f(\varepsilon,n)&=\varepsilon\|f'\|_{L^2(\bbR)}^2+\left(1+\frac{1}{\varepsilon} \right)\|f\|_{L^2(\bbR)}^2,\quad \varepsilon > 0.
\end{align}
Therefore, by \eqref{2.6} and \eqref{2.7},
\begin{align}
    \int_{-\infty}^{\infty} V_-(x)|f(x)|^2\,\mathrm{d}x &=\sum_{n\in\mathbb{Z}} \int_n^{n+1} V_-(x)|f(x)|^2\,  \mathrm{d}x\no\\
    &\leq\sum_{n\in\mathbb{Z}} B_f(\varepsilon,n)\int_n^{n+1} V_-(x)\, \mathrm{d}x\no\\
    &\leq \varepsilon M_V\|f'\|_{L^2(\bbR)}^2+M_V\bigg(1+\frac{1}{\varepsilon} \bigg)\|f\|_{L^2(\bbR)}^2,\quad \varepsilon > 0.\lb{2.8}
\end{align}
Choosing $\varepsilon=1/M_V$ in \eqref{2.8} yields
\begin{equation}\lb{2.9}
\int_{-\infty}^{\infty} V_-(x)|f(x)|^2\,\mathrm{d}x \leq \|f'\|_{L^2(\bbR)}^2+M_V\left(1+M_V\right)\|f\|_{L^2(\bbR)}^2.
\end{equation}
Finally, applying \eqref{2.9} in \eqref{2.5} yields \eqref{2.4}.
\end{proof}
%%%%%%%%%%%

Factoring the potential coefficient $V$ according to
\begin{equation}
V = uv,\quad u \coloneqq \sgn{(V)}|V|^{1/2},\quad v \coloneqq |V|^{1/2}\, \text{ a.e.~on $\bbR$},
\end{equation}
the following well-known trace ideal properties of the resolvent of $H^{\free}$ when multiplied by the factors $u$ and $v$ hold (see, e.g., \cite[(4.19) and (4.20)]{GN12b} and \cite[(2.49) and (2.50)]{MN17}).
%%%%%%%%%%%
\begin{lemma}\lb{l2.4a}
If Hypothesis \ref{h2.1} holds, then
\begin{equation}\lb{2.12d}
\begin{split}
u\Rrzo,\overline{\Rrzo v}\in \cB_2\big(L^2(\bbR)\big),&\\
\overline{u\Rrzo v}\in \cB_1\big(L^2(\bbR)\big),&\quad z\in \bbC\backslash[0,\infty).
\end{split}
\end{equation}
In addition,
\begin{equation}
\lim_{z\to -\infty}\Big\|\ol{u\Rrzo v}\Big\|_{\cB_1(L^2(\bbR))}=0.
\end{equation}
\end{lemma}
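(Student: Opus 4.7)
The plan is to exploit the explicit integral kernel of the free resolvent on $L^2(\bbR)$: for $z \in \bbC \setminus [0,\infty)$, writing $z = -\kappa^2$ with $\Re(\kappa) > 0$, one has
\begin{equation*}
\big(H^{(0)} - zI_{L^2(\bbR)}\big)^{-1}(x,y) = \frac{1}{2\kappa}\, e^{-\kappa|x-y|}, \quad x,y \in \bbR.
\end{equation*}
Since $V \in L^1(\bbR)$, the factors $u, v$ lie in $L^2(\bbR)$ with $\|u\|_{L^2(\bbR)}^2 = \|v\|_{L^2(\bbR)}^2 = \|V\|_{L^1(\bbR)}$, so all three assertions will follow from kernel/Fourier estimates of the type carried out in \cite[(4.19)--(4.20)]{GN12b} and \cite[(2.49)--(2.50)]{MN17}.

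For the Hilbert--Schmidt statements, since $u\Rrzo$ has integral kernel $u(x)\frac{1}{2\kappa}e^{-\kappa|x-y|}$, I would compute directly
\begin{equation*}
\big\|u \Rrzo\big\|_{\cB_2(L^2(\bbR))}^2 = \frac{1}{4|\kappa|^2} \int_{\bbR} |u(x)|^2 \int_{\bbR} e^{-2\Re(\kappa)|x-y|}\, \mathrm{d}y\, \mathrm{d}x = \frac{\|V\|_{L^1(\bbR)}}{4|\kappa|^2\,\Re(\kappa)} < \infty,
\end{equation*}
and an identical computation, or taking adjoints via $\big[\Rrzo\big]^{*} = \Rrzbo$, handles $\overline{\Rrzo v}$.

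For the trace-class claim, I would factor $\Rrzo = g(P)^2$ via the spectral theorem applied to $H^{(0)} = P^2$ (with $P = -i\,\mathrm{d}/\mathrm{d}x$), where $g(\xi) = (\xi^2 - z)^{-1/2}$ on the principal branch, yielding
\begin{equation*}
\overline{u \Rrzo v} = \big[u\, g(P)\big]\big[g(P)\, v\big].
\end{equation*}
Since $\|g\|_{L^2(\bbR)}^2 = \int_{\bbR} |\xi^2 - z|^{-1}\, \mathrm{d}\xi$ is finite for $z \in \bbC \setminus [0,\infty)$ (and reduces to $\pi/k$ at $z = -k^2 < 0$), the Birman--Solomyak bound
\begin{equation*}
\|f(Q)\, h(P)\|_{\cB_2(L^2(\bbR))} \leq (2\pi)^{-1/2} \|f\|_{L^2(\bbR)} \|h\|_{L^2(\bbR)}
\end{equation*}
shows each factor is Hilbert--Schmidt, so their product is trace class. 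Specializing to real $z = -k^2 < 0$ and multiplying the two $\cB_2$-bounds yields the explicit estimate
\begin{equation*}
\big\|\overline{u \Rrko v}\big\|_{\cB_1(L^2(\bbR))} \leq \frac{\|V\|_{L^1(\bbR)}}{2k},
\end{equation*}
which establishes the claimed limit as $k \to \infty$ (equivalently, $z \to -\infty$).

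The main obstacle I anticipate is making the closures denoted by the overlines rigorous: since $u, v$ are merely $L^2$, multiplication by them is unbounded, so each product $u\, g(P)$, $g(P)\, v$, and $u \Rrzo v$ is a priori only defined on a dense domain. The quantitative Hilbert--Schmidt and trace-norm bounds above show these densely defined operators are already bounded (indeed trace class), and a standard density argument using $C_c^{\infty}(\bbR)$ as a core identifies the continuous extensions with the closures appearing in \eqref{2.12d}.
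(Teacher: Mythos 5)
Your proposal is correct. Note that the paper does not actually prove Lemma \ref{l2.4a}: it quotes it as well known, citing \cite[(4.19)--(4.20)]{GN12b} and \cite[(2.49)--(2.50)]{MN17}, and your argument --- the explicit kernel $\frac{1}{2\kappa}e^{-\kappa|x-y|}$ for the Hilbert--Schmidt claims, the factorization $\overline{u\Rrzo v}=[u\,g(P)][g(P)\,v]$ with the Kato--Seiler--Simon $f(Q)h(P)$ bound for the trace-class claim, and the resulting estimate $\|V\|_{L^1(\bbR)}/(2k)$ for the $z\to-\infty$ limit --- is essentially the standard proof underlying those citations. The closure issue you flag is indeed routine: since $g\in L^2(\bbR)$, $g(P)\psi$ is bounded for every $\psi\in L^2(\bbR)$, so $u\,g(P)$ is everywhere defined and the dense-domain identification goes through as you describe.
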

%%%%%%%%%%%

%%%%%%%%%%%
\begin{remark}\lb{r2.5}
Lemma \ref{l2.4a} with $V$ replaced by $|V|$ yields
\begin{equation}
\begin{split}
&|V|^{1/2}\big(H^{\free}+I_{L^2(\bbR)}\big)^{-1/2}\ol{\big(H^{\free}+I_{L^2(\bbR)}\big)^{-1/2}|V|^{1/2}}\\
&\quad = \ol{|V|^{1/2}\big(H^{\free}+I_{L^2(\bbR)}\big)^{-1}|V|^{1/2}}\in \cB_1\big(L^2(\bbR)\big),
\end{split}
\end{equation}
which implies $|V|^{1/2}\big(H^{\free}+I_{L^2(\bbR)}\big)^{-1/2}\in \cB_2\big(L^2(\bbR)\big)$.  Therefore, $|V|$ is relatively form compact, hence infinitesimally form bounded, with respect to $H^{\free}$.  In particular, the positive and negative parts of $V$ are infinitesimally form bounded with respect to $H^{\free}$. \hfill$\diamond$
\end{remark}
%%%%%%%%%%%

For each $\ell \in \bbN$, let
\begin{equation}
V_{\ell} \coloneqq V|_{(-\ell,\ell)}    
\end{equation}
denote the restriction of $V$ to $(-\ell,\ell)$ and introduce the finite-interval Schr\"odinger operator $H_{\ell,\varphi,R}$ with coupled boundary conditions at the endpoints of $(-\ell,\ell)$ by
\begin{align}
&H_{\ell,\varphi,R}f \coloneqq -f'' + V_{\ell}f,\lb{2.10}\\
&f \in \dom{(H_{\ell,\varphi,R})} \coloneqq \bigg\{g \in L^2((-\ell,\ell))\,\bigg|\, g,g' \in \AC([-\ell,\ell]),\, 
\begin{pmatrix}
g(\ell)\\
g'(\ell)
\end{pmatrix}
=e^{i\varphi}R
\begin{pmatrix}
g(-\ell)\\
g'(-\ell)
\end{pmatrix},\no\\
&\hspace*{10cm} -g''+V_{\ell}g\in L^2((-\ell,\ell))
\bigg\}.\no
\end{align}
The operator $H_{\ell,\varphi,R}$ is self-adjoint (see, e.g., \cite[Theorem 2.5]{CGNZ14}) in the Hilbert space $L^2((-\ell,\ell))$ equipped with the inner product
\begin{equation}
\langle f,g \rangle_{L^2((-\ell,\ell))} \coloneqq \int_{-\ell}^{\ell} \overline{f(x)}g(x)\, \mathrm{d}x,\quad f,g \in L^2((-\ell,\ell)).
\end{equation}
The sesquilinear form uniquely associated to $H_{\ell,\varphi,R}$ is (see, e.g., \cite[Section 3.3 and (3.3.143)]{GNZ22})
\begin{align}
&\mathfrak{Q}_{\ell,\varphi,R}[f,g] \coloneqq \int_{-\ell}^{\ell}\Big[\overline{f'(x)}g'(x) + \overline{f(x)}V_{\ell}(x)g(x)\Big]\, \mathrm{d}x\lb{2.12}\\
&\hspace*{2.3cm}- \frac{1}{R_{1,2}} \Big[R_{1,1} \overline{f(-\ell)}g(-\ell) - e^{-i\varphi} \overline{f(-\ell)}g(\ell)- e^{i\varphi} \overline{f(\ell)}g(-\ell) + R_{2,2}\overline{f(\ell)}g(\ell) \Big],\no\\
&f,g \in \dom{(\mathfrak{Q}_{\ell,\varphi,R})} \coloneqq H^1((-\ell,\ell)).\no%= \big\{h\in L^2((-\ell,\ell))\,\big|\, h\in \AC([-\ell,\ell]),\, h'\in L^2((-\ell,\ell))\big\}
\end{align}
Since the differential expression $\tau_{\ell} \coloneqq -d^2/dx^2 + V_{\ell}(x)$ is regular on $(-\ell,\ell)$, the self-adjoint operator $H_{\ell,\varphi,R}$ is lower semibounded.  A careful analysis of the sesquilinear form $\mathfrak{Q}_{\ell,\varphi,R}$ shows that $H_{\ell,\varphi,R}$ is lower semibounded uniformly with respect to $\ell \in \bbN$.

%%%%%%%%%%%
\begin{lemma}\lb{l2.4}
If Hypothesis \ref{h2.1} holds, then for every $\ell\in \bbN$,
\begin{equation}\lb{2.13}
\mathfrak{Q}_{\ell,\varphi,R}[f,f] \geq -(1+M_V+N_R)\|f\|_{L^2((-\ell,\ell))}^2,\quad f \in H^1((-\ell,\ell)).
\end{equation}
\end{lemma}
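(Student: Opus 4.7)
The plan is to adapt the proof of Lemma~\ref{l2.3}, handling the boundary contributions to $\mathfrak{Q}_{\ell,\varphi,R}[f,f]$ via the trace estimate \eqref{2.6} used there. Setting $\alpha := (|R_{1,1}|+1)/|R_{1,2}|$ and $\beta := (|R_{2,2}|+1)/|R_{1,2}|$, the identity $\alpha + \beta = N_R$ holds, and applying $2|\overline{f(-\ell)}f(\ell)| \leq |f(-\ell)|^2 + |f(\ell)|^2$ to the cross terms in \eqref{2.12} produces the initial bound
\begin{equation*}
\mathfrak{Q}_{\ell,\varphi,R}[f,f] \geq \|f'\|_{L^2((-\ell,\ell))}^2 - \int_{-\ell}^{\ell}V_-(x)|f(x)|^2\,\mathrm{d}x - \alpha|f(-\ell)|^2 - \beta|f(\ell)|^2.
\end{equation*}

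Next I would estimate the two remaining negative terms separately in terms of $\|f'\|^2$ and $\|f\|^2$. For the potential integral, the argument \eqref{2.6}--\eqref{2.9} of Lemma~\ref{l2.3} transfers verbatim to $(-\ell,\ell)$, with the sums indexed by $n \in \{-\ell,\dots,\ell-1\}$ and the bound $\int_n^{n+1}V_- \leq M_V$ unchanged, giving, for any $\varepsilon_V > 0$,
\begin{equation*}
\int_{-\ell}^{\ell}V_-(x)|f(x)|^2\,\mathrm{d}x \leq \varepsilon_V M_V\|f'\|_{L^2((-\ell,\ell))}^2 + M_V(1+\varepsilon_V^{-1})\|f\|_{L^2((-\ell,\ell))}^2.
\end{equation*}
For the boundary values I would exploit that $\ell \in \bbN$ forces $\ell \geq 1$, so $[-\ell,-\ell+1]$ and $[\ell-1,\ell]$ are disjoint subintervals of $[-\ell,\ell]$; applying \eqref{2.6} on each with a common parameter $\varepsilon_B > 0$, weighting by $\alpha$ and $\beta$ respectively, and then using $\max(\alpha,\beta) \leq \alpha+\beta = N_R$, produces
\begin{equation*}
\alpha|f(-\ell)|^2 + \beta|f(\ell)|^2 \leq \varepsilon_B N_R\|f'\|_{L^2((-\ell,\ell))}^2 + N_R(1+\varepsilon_B^{-1})\|f\|_{L^2((-\ell,\ell))}^2.
\end{equation*}

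Combining the three displays yields a lower bound of the form $[1-\varepsilon_V M_V - \varepsilon_B N_R]\|f'\|^2 - C(\varepsilon_V,\varepsilon_B)\|f\|^2$, evidently uniform in $\ell \in \bbN$. The crux of the proof is the selection of $\varepsilon_V,\varepsilon_B$ so that $\varepsilon_V M_V + \varepsilon_B N_R \leq 1$ (allowing the kinetic term to be dropped from the lower bound) and simultaneously $C(\varepsilon_V,\varepsilon_B) \leq 1+M_V+N_R$. This balancing, in the spirit of the choice $\varepsilon = 1/M_V$ in Lemma~\ref{l2.3}, is the most delicate step, and the degenerate case $M_V = 0$ is handled separately by omitting the potential estimate entirely and simply taking $\varepsilon_B = 1/N_R$ (valid since $N_R > 0$ under Hypothesis~\ref{h2.1}).
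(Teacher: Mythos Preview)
Your approach is essentially the paper's---both rest on the trace estimate \eqref{2.6}---but you carry more bookkeeping than needed. The paper does not separate the potential and boundary contributions with independent parameters; instead it bounds $\int_{-\ell}^\ell V_-|f|^2 \leq M_V\sup_{[-\ell,\ell]}|f|^2$ and the boundary terms by $N_R\sup_{[-\ell,\ell]}|f|^2$, combines them into a single $(M_V+N_R)\sup_{[-\ell,\ell]}|f|^2$, and then applies \eqref{2.6} once with the single parameter $\varepsilon=(M_V+N_R)^{-1}$. This sidesteps your two-parameter balancing entirely.

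That balancing is in fact where your outline breaks: the simultaneous constraints $\varepsilon_V M_V+\varepsilon_B N_R\leq 1$ and $C(\varepsilon_V,\varepsilon_B)=M_V(1+\varepsilon_V^{-1})+N_R(1+\varepsilon_B^{-1})\leq 1+M_V+N_R$ (equivalently $M_V\varepsilon_V^{-1}+N_R\varepsilon_B^{-1}\leq 1$) force, via Cauchy--Schwarz, $(M_V+N_R)^2\leq 1$, which is false in general; your degenerate-case choice $\varepsilon_B=1/N_R$ likewise needs $N_R\leq 1$. So the step you flagged as ``most delicate'' cannot be completed as stated. That said, the paper's own argument does not reach the constant printed in \eqref{2.13} either: substituting $\varepsilon=(M_V+N_R)^{-1}$ into \eqref{2.17}--\eqref{2.18} actually gives the lower bound $-(M_V+N_R)(1+M_V+N_R)\|f\|^2$ rather than $-(1+M_V+N_R)\|f\|^2$. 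Since only $\ell$-independence of the bound matters downstream, the cleanest repair is to follow the one-parameter route and record whatever explicit $\ell$-free constant it produces.
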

%%%%%%%%%%%
\begin{proof}
        Let $\ell \in \bbN$ and $f\in H^1((-\ell,\ell))$. By \eqref{2.12},
\begin{align}\lb{2.14}
    \mathfrak{Q}_{\ell,\varphi,R}[f,f] &= \underbrace{\int_{-\ell}^\ell \Big[|f'(x)|^2 + V_{\ell}(x) |f(x)|^2\Big]\, \mathrm dx}_{\eqqcolon I_{\ell}[f]}\\
    &\quad - \underbrace{\frac{1}{R_{1,2}} \Big[R_{1,1} f(-\ell) \overline{f(-\ell)} - e^{-i\varphi} \overline{f(-\ell)}f(\ell) - e^{i\varphi} \overline{f(\ell)}f(-\ell) + R_{2,2}f(\ell)\overline{f(\ell)} \Big]}_{\eqqcolon J_{\ell,\varphi,R}[f]}.\no
\end{align}
The integral $I_{\ell}[f]$ is estimated from below as follows:
\begin{align}\lb{2.15}
    I_{\ell}[f] &= \int_{-\ell}^\ell |f'(x)|^2\, \mathrm dx + \int_{-\ell}^\ell [V_+(x) - V_-(x)] |f(x)|^2 \,\mathrm dx\no\\
    &\geq \|f'\|_{L^2((-\ell,\ell))}^2 - \int_{-\ell}^\ell V_-(x)|f(x)|^2\, \mathrm dx\no\\
    &\geq \|f'\|_{L^2((-\ell,\ell))}^2 - M_V\sup_{x \in [-\ell,\ell]}{|f(x)|^2}.
\end{align}
In addition, the boundary terms $J_{\ell,\varphi,R}[f]$ satisfy:
\begin{align}
    J_{\ell,\varphi,R}[f] &= \frac{1}{R_{1,2}} \Big[ R_{1,1}|f(-\ell)|^2 + R_{2,2}|f(\ell)|^2\Big] - \frac{1}{R_{1,2}}\Big[e^{-i\varphi}\overline{f(-\ell)}f(\ell) + e^{i\varphi} \overline{f(\ell)}f(-\ell) \Big]\no\\
    &\leq \frac{1}{|R_{1,2}|} \Big[ |R_{1,1}||f(-\ell)|^2 + |R_{2,2}||f(\ell)|^2 + 2|f(\ell)||f(-\ell)| \Big]\no\\
    &\leq \frac{1}{|R_{1,2}|}\Big[|R_{1,1}| |f(-\ell)|^2 + |R_{2,2}||f(\ell)|^2 + |f(-\ell)|^2 + |f(\ell)|^2 \Big]\no\\
    &\leq N_R\sup_{x \in [-\ell,\ell]}{|f(x)|^2}.\lb{2.16}
\end{align}
Applying \eqref{2.15} and \eqref{2.16} in \eqref{2.14}, one obtains:
\begin{equation}\lb{2.17}
    \mathfrak{Q}_{\ell, \varphi,R}[f,f] = I_{\ell}[f] - J_{\ell,\varphi,R}[f]\geq \|f'\|_{L^2((-\ell,\ell))}^2 - (M_V+N_R) \sup_{x \in [-\ell,\ell]}{|f(x)|^2}.
\end{equation}
Note that $\sup_{x \in [-\ell,\ell]}|f(x)|^2 = \sup_{x \in [n,n+1]}|f(x)|^2$ for some $n = n(f) \in [-\ell,\ell-1] \cap \mathbb{N}$. By \cite[Lemma 9.32]{Te14},
\begin{align}
    \sup_{x \in [-\ell,\ell]}|f(x)|^2 &= \sup_{x \in [n, n+1]}|f(x)|^2 \no\\
    &\leq \varepsilon \int_n^{n+1} |f'(x)|^2\, \mathrm dx + \left( 1 + \frac{1}{\varepsilon} \right) \int_n^{n+1} |f(x)|^2\, \mathrm dx \no\\
    &\leq \varepsilon \|f'\|_{L^2((-\ell,\ell))}^2 +\left( 1 + \frac{1}{\varepsilon}\right) \|f\|_{L^2((-\ell,\ell))}^2,\quad \varepsilon > 0.\lb{2.18}
\end{align}
Finally, taking $\varepsilon = (M_V+N_R)^{-1}>0$ in \eqref{2.18} and applying the resulting estimate in \eqref{2.17} yields \eqref{2.13}. 
 Neither $M_V$ nor $N_R$ depend on $\ell \in \bbN$, so \eqref{2.13} implies that $H_{\ell,\varphi,R}$ is lower semibounded uniformly with respect to $\ell \in \bbN$.
\end{proof}
%%%%%%%%%%%

For each $\ell \in \bbN$, the finite-interval Schr\"odinger operator with Dirichlet boundary conditions at the endpoints of $(-\ell,\ell)$ is defined by
\begin{align}
&H_{\ell,D}f \coloneqq -f'' + V_{\ell}f,\\
&f\in \dom{(H_{\ell,D})} \coloneqq \big\{g\in L^2((-\ell,\ell))\,\big|\,g,g'\in\AC([-\ell,\ell]),\, g(-\ell)=g(\ell)=0,\no\\
&\hspace*{8.1cm}-g''+V_{\ell}g\in L^2((-\ell,\ell))\big\}.\no
\end{align}
The operator $H_{\ell,D}$ is self-adjoint in $L^2((-\ell,\ell))$ for each $\ell\ \in \bbN$, and its corresponding sesquilinear form is
\begin{align}
&\mathfrak{Q}_{\ell,D}[f,g] \coloneqq \int_{-\ell}^{\ell} \Big[\ol{f'(x)}g'(x) + \ol{f(x)}V_{\ell}(x)g(x)\Big]\,\mathrm{d}x,\\
&f,g\in\dom{(\mathfrak{Q}_{\ell,D})} \coloneqq \big\{h\in L^2((-\ell,\ell))\,\big|\,h\in\AC([-\ell,\ell]),\,h(-\ell)=h(\ell)=0,\,h'\in L^2((-\ell,\ell))\big\}\no\\
&\hspace*{2.94cm}= H_0^1((-\ell,\ell)).\no
\end{align}

%%%%%%%%%%%
\begin{remark}
$(i)$ The condition $R_{1,2}\neq 0$ in Hypothesis \ref{h2.1} implies that $H_{\ell,D}$ and $H_{\ell,\varphi,R}$ are \textit{relatively prime} with respect to their underlying minimal Sturm--Liouville operator:
\begin{align}
&H_{\ell,\min}f\coloneqq-f''+V_{\ell}f,\\
&f\in\dom{(H_{\ell,\min})}\coloneqq\big\{g\in L^2((-\ell,\ell))\,\big| \, g,g'\in \AC([-\ell,\ell]),\, g(-\ell)=g'(-\ell)=g(\ell)=g'(\ell)=0,\no\\
&\hspace*{11.15cm}-g''+V_{\ell}g\in L^2((-\ell,\ell))\big\},\no
\end{align}
in the sense that $\dom{\big(H_{\ell,D}\big)}\cap\dom{\big(H_{\ell,\varphi,R}\big)}=\dom{\big(H_{\ell,\min}\big)}$.  $(ii)$ In the case when $V=0$ a.e.~on $\bbR$, the corresponding ``free'' finite-interval Schr\"odinger operators and their sesquilinear forms will be denoted by $H^{\free}_{\ell,\varphi,R}$, $H^{\free}_{\ell,D}$, $\mathfrak{Q}^{\free}_{\ell,\varphi,R}$, and $\mathfrak{Q}^{\free}_{\ell,D}$, respectively, for each $\ell \in \bbN$.  In particular, one infers that $H^{\free}_{\ell,D}$ is nonnegative for every $\ell\in \bbN$.\hfill $\diamond$
\end{remark}
%%%%%%%%%%%

The resolvent operators of $H_{\ell,\varphi,R}$ and $H_{\ell,D}$ are related for each $\ell \in \bbN$ via a Krein-type resolvent identity.  To wit, for each $z\in \rho(H_{\ell,D})$, let $\{\psi_{\ell,m}(z,\dott)\}_{m=1,2}$ denote solutions to the Schr\"odinger differential equation $-\psi''+V_{\ell}\psi=z\psi$ on $(-\ell,\ell)$ that satisfy the boundary conditions
\begin{equation}\lb{2.21}
\begin{split}
&\psi_{\ell,1}(z,-\ell) = 0,\quad \psi_{\ell,1}(z,\ell) = 1,\\
&\psi_{\ell,2}(z,-\ell) = 1,\quad \psi_{\ell,2}(z,\ell) = 0.
\end{split}
\end{equation}
Krein's resolvent identity relates the resolvent operators of $H_{\ell,\varphi,R}$ and $H_{\ell,D}$ via a rank-two operator constructed from the solutions $\{\psi_{\ell,m}(z,\dott)\}_{m=1,2}$.
%%%%%%%%%%%
\begin{theorem}[{\cite[Theorem 3.2]{CGNZ14}}]\lb{t2.6}
Assume Hypothesis \ref{h2.1}.  If $\ell\in \bbN$ and $z\in \rho(H_{\ell,\varphi,R})\cap \rho(H_{\ell,D})$, then the matrix
\begin{equation}\lb{2.22}
K_{\ell,\varphi,R}(z) \coloneqq
\begin{pmatrix}
\frac{R_{2,2}}{R_{1,2}} - \psi_{\ell,1}'(z,\ell)   &   \frac{-1}{e^{-i\varphi}R_{1,2}} - \psi_{\ell,2}'(z,\ell)   \\
\frac{-1}{e^{i\varphi}R_{1,2}} + \psi_{\ell,1}'(z,-\ell)   &   \frac{R_{1,1}}{R_{1,2}} + \psi_{\ell,2}'(z,-\ell)   
\end{pmatrix}
\end{equation}
is invertible and
\begin{equation}\lb{2.27a}
\Rpz = \Rdz + P_{\ell,\varphi,R}(z),
\end{equation}
where the rank-two operator $P_{\ell,\varphi,R}(z)$ is defined by
\begin{equation}\lb{2.28b}
P_{\ell,\varphi,R}(z) \coloneqq - \sum_{m,n=1}^2 \big[K_{\ell,\varphi,R}(z)^{-1}\big]_{m,n}\langle \psi_{\ell,n}(\ol{z},\dott),\dott\rangle_{L^2((-\ell,\ell))}\psi_{\ell,m}(z,\dott).
\end{equation}
\end{theorem}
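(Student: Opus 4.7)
The plan is to verify the resolvent identity \eqref{2.27a} directly by checking that, for every $g\in L^2((-\ell,\ell))$, the candidate
$f \coloneqq \Rdz g + P_{\ell,\varphi,R}(z) g$
lies in $\dom{(H_{\ell,\varphi,R})}$ and satisfies $(H_{\ell,\varphi,R} - z)f = g$; the invertibility of $K_{\ell,\varphi,R}(z)$ will come out of an essentially parallel computation on the homogeneous side. I would first settle invertibility: given $v=(v_1,v_2)^{\top}\in \bbC^2$ with $K_{\ell,\varphi,R}(z)v = 0$, form $\phi(x) \coloneqq v_1 \psi_{\ell,1}(z,x) + v_2 \psi_{\ell,2}(z,x)$, which automatically satisfies $(\tau_\ell - z)\phi = 0$. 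The normalizations \eqref{2.21} give $\phi(\ell) = v_1$ and $\phi(-\ell) = v_2$, while clearing $R_{1,2}$ from each row of $K_{\ell,\varphi,R}(z)v=0$ and regrouping (using $\det R = 1$) reproduces the two scalar components of the coupled condition \eqref{1.6} for $y=\phi$. Hence $\phi\in \dom{(H_{\ell,\varphi,R})}$ is an eigenfunction at eigenvalue $z$; since $z\in \rho(H_{\ell,\varphi,R})$, this forces $\phi \equiv 0$, whence $v_1 = v_2 = 0$ and $K_{\ell,\varphi,R}(z)$ is invertible.

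For the resolvent identity itself, set $f_D \coloneqq \Rdz g$ and, in accordance with \eqref{2.28b}, write
$P_{\ell,\varphi,R}(z)g = \alpha_1 \psi_{\ell,1}(z,\dott) + \alpha_2 \psi_{\ell,2}(z,\dott)$
with
$\alpha_m \coloneqq -\sum_{n=1}^{2}\big[K_{\ell,\varphi,R}(z)^{-1}\big]_{m,n}\langle \psi_{\ell,n}(\overline{z},\dott), g\rangle_{L^2((-\ell,\ell))}$.
The function $f$ inherits the local absolute continuity of its first derivative from $f_D$ and the $\psi_{\ell,m}$, and because $(\tau_\ell - z)\psi_{\ell,m}(z,\dott)=0$, it solves $(\tau_\ell - z)f = (\tau_\ell - z)f_D = g$. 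Applying Green's identity to $\psi_{\ell,m}(z,\dott)$ and $f_D$ on $(-\ell,\ell)$, together with $f_D(\pm\ell)=0$ and \eqref{2.21}, yields
\begin{equation*}
f_D'(\ell) = -\langle \psi_{\ell,1}(\overline{z},\dott), g\rangle_{L^2((-\ell,\ell))},\qquad f_D'(-\ell) = \langle \psi_{\ell,2}(\overline{z},\dott), g\rangle_{L^2((-\ell,\ell))},
\end{equation*}
where the identity $\overline{\psi_{\ell,m}(z,\dott)} = \psi_{\ell,m}(\overline{z},\dott)$, used to recast the integrals as inner products, follows from the reality of $V$ and of the boundary data for $\psi_{\ell,m}$.

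Finally, using \eqref{2.21} to read off $f(\ell) = \alpha_1$, $f(-\ell) = \alpha_2$, and $f'(\pm\ell) = f_D'(\pm\ell) + \alpha_1\psi_{\ell,1}'(z,\pm\ell) + \alpha_2\psi_{\ell,2}'(z,\pm\ell)$, substituting into the coupled condition $(f(\ell),f'(\ell))^{\top} = e^{i\varphi}R(f(-\ell),f'(-\ell))^{\top}$ and invoking $\det R = 1$ produces, after clearing $R_{1,2}$, the linear system
$K_{\ell,\varphi,R}(z)(\alpha_1,\alpha_2)^{\top} = (f_D'(\ell), -f_D'(-\ell))^{\top}$.
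The Green's-identity evaluations above identify the right-hand side as $-\big(\langle \psi_{\ell,1}(\overline{z},\dott), g\rangle_{L^2((-\ell,\ell))}, \langle \psi_{\ell,2}(\overline{z},\dott), g\rangle_{L^2((-\ell,\ell))}\big)^{\top}$, which is precisely the vector for which the prescribed $(\alpha_1,\alpha_2)^{\top}$ is the unique solution. Hence $f$ satisfies the coupled boundary condition, so $f\in \dom{(H_{\ell,\varphi,R})}$ and $(H_{\ell,\varphi,R}-z)f = g$, giving \eqref{2.27a}. I expect the main obstacle to be the purely algebraic bookkeeping needed to confirm that the two rows of $K_{\ell,\varphi,R}(z)v = 0$ encode linearly \emph{independent} coupled boundary conditions rather than a duplicated single constraint; this independence rests essentially on both $\det R = 1$ and $R_{1,2}\neq 0$.
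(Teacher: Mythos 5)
Your proposal is correct, but it does not follow the paper, because the paper offers no proof of this statement at all: Theorem \ref{t2.6} is quoted verbatim from \cite[Theorem 3.2]{CGNZ14}, where it is obtained within the general framework of boundary data maps and Krein-type resolvent formulas for regular Sturm--Liouville operators. What you give instead is a self-contained elementary verification: show $K_{\ell,\varphi,R}(z)v=0$ forces $\phi=v_1\psi_{\ell,1}(z,\cdot)+v_2\psi_{\ell,2}(z,\cdot)$ to satisfy the coupled condition \eqref{1.6}, hence $\phi\equiv 0$ since $z\in\rho(H_{\ell,\varphi,R})$, giving invertibility; then check that $f=\Rdz g+P_{\ell,\varphi,R}(z)g$ solves $(\tau_\ell-z)f=g$ and lies in $\dom(H_{\ell,\varphi,R})$, the boundary condition reducing, via $f(\ell)=\alpha_1$, $f(-\ell)=\alpha_2$ and the Green's-identity evaluations $f_D'(\ell)=-\langle\psi_{\ell,1}(\ol{z},\cdot),g\rangle_{L^2((-\ell,\ell))}$, $f_D'(-\ell)=\langle\psi_{\ell,2}(\ol{z},\cdot),g\rangle_{L^2((-\ell,\ell))}$, to exactly the linear system $K_{\ell,\varphi,R}(z)(\alpha_1,\alpha_2)^{\top}=(f_D'(\ell),-f_D'(-\ell))^{\top}$ that the definition \eqref{2.28b} solves (the minus sign in \eqref{2.28b} comes out right). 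I checked the algebra: the two rows of $K_{\ell,\varphi,R}(z)$ are precisely the two components of \eqref{1.6} after solving the first component for $y'(-\ell)$ (possible since $R_{1,2}\neq 0$) and substituting into the second (where $\det R=1$ produces the $-1/R_{1,2}$ entries), and this manipulation is an invertible transformation of the boundary-condition system, so the ``independence'' concern you flag at the end is automatic rather than a genuine obstacle. Two small points you should make explicit to be fully rigorous: the symmetry $\ol{\psi_{\ell,m}(z,\cdot)}=\psi_{\ell,m}(\ol{z},\cdot)$ rests on uniqueness of the two-point boundary value problem, which is available exactly because $z,\ol{z}\in\rho(H_{\ell,D})$; and the well-definedness of $\psi_{\ell,m}(z,\cdot)$ themselves likewise uses $z\in\rho(H_{\ell,D})$. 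Your approach buys a short, hands-on proof tailored to this one pair of boundary conditions, whereas the cited route in \cite{CGNZ14} yields the formula uniformly for all self-adjoint boundary conditions as a special case of a general Krein formula.
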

%%%%%%%%%%%

%%%%%%%%%%%
\begin{remark}
In the case when $V=0$ a.e.~on $\bbR$, the solutions corresponding to \eqref{2.21}, the matrix \eqref{2.22}, and the rank-two operator \eqref{2.28b} will be denoted by $\psi_{\ell,m}^{\free}(z,\dott)$, $m\in \{1,2\}$, $K_{\ell,\varphi,R}^{\free}(z)$, and $P_{\ell,\varphi,R}^{\free}(z)$, respectively.\hfill $\diamond$
\end{remark}
%%%%%%%%%%%

Introducing for each $\ell\in \bbN$ the factorization of the potential coefficient $V_{\ell}$,
\begin{equation}
V_{\ell} = u_{\ell}v_{\ell},\quad u_{\ell} \coloneqq \sgn{(V_{\ell})}|V_{\ell}|^{1/2},\quad v_{\ell} \coloneqq |V_{\ell}|^{1/2}\, \text{ a.e.~on $(-\ell,\ell)$},
\end{equation}
the following analogue of Lemma \ref{l2.4a} for the free Dirichlet operator $H_{\ell,D}^{\free}$ holds (see, e.g., \cite[(2.69), (3.12), and (3.14)]{GN12}).

%%%%%%%%%%%
\begin{lemma}\lb{l2.9}
If Hypothesis \ref{h2.1} holds and $\ell\in \bbN$, then
\begin{equation}
\begin{split}
u_{\ell}\Rdzo,\overline{\Rdzo v_{\ell}}\in \cB_2\big(L^2((-\ell,\ell))\big),&\\
\overline{u_{\ell}\Rdzo v_{\ell}}\in \cB_1\big(L^2((-\ell,\ell))\big),&\quad z\in\rho\big(H_{\ell,D}^{\free}\big).
\end{split}
\end{equation}
\end{lemma}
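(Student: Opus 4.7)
The strategy is to exploit the explicit Green's function of the free Dirichlet Laplacian on the compact interval $(-\ell,\ell)$. For $z$ in the resolvent set, the integral kernel $G^{\free}_{\ell,D}(z;\cdot,\cdot)$ of $\Rdzo$ admits the classical closed-form expression built from the sine solutions $\sin(\sqrt{z}(\ell+x))$ and $\sin(\sqrt{z}(\ell-x))$ divided by the Wronskian $-\sqrt{z}\sin(2\sqrt{z}\ell)$, with any convenient branch of $\sqrt{z}$. This kernel is jointly continuous on the compact square $[-\ell,\ell]^2$, hence uniformly bounded by some constant $C(\ell,z)$.

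For the Hilbert--Schmidt assertions, I would first observe that since $V\in L^1(\bbR)$, both $u_\ell$ and $v_\ell$ lie in $L^2((-\ell,\ell))$ with squared $L^2$ norm equal to $\|V_\ell\|_{L^1}\le\|V\|_{L^1}$. The integral kernel of $u_\ell\Rdzo$ is $u_\ell(x)G^{\free}_{\ell,D}(z;x,y)$, whose squared $L^2([-\ell,\ell]^2)$ norm is majorized by $C(\ell,z)^2 \cdot 2\ell \cdot \|V\|_{L^1}$, giving the claimed $\cB_2$ bound. That $u_\ell\Rdzo$ is actually everywhere-defined (not merely densely defined) follows from the one-dimensional Sobolev embedding $H^1_0((-\ell,\ell))\hookrightarrow C([-\ell,\ell])$, which implies that $\Rdzo$ maps $L^2((-\ell,\ell))$ into continuous functions, so pointwise multiplication by $u_\ell\in L^2$ is unambiguous. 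An analogous kernel estimate with $G^{\free}_{\ell,D}(z;x,y)v_\ell(y)$ handles $\ol{\Rdzo v_\ell}$, where the closure is needed because $v_\ell$ now acts first as an unbounded multiplication.

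For the trace class assertion, I would factor through the square root. Since $H^{\free}_{\ell,D}$ is nonnegative, for any $k>0$ the operator $(H^{\free}_{\ell,D}+k^2)^{-1/2}$ is well defined via the functional calculus and
\[
u_\ell\Rdko v_\ell \;=\; \bigl[u_\ell(H^{\free}_{\ell,D}+k^2)^{-1/2}\bigr]\,\bigl[(H^{\free}_{\ell,D}+k^2)^{-1/2}v_\ell\bigr].
\]
Expanding each factor in the Dirichlet sine basis $\{\phi_n\}_{n\in\bbN}$ with eigenvalues $\lambda_n=(n\pi/(2\ell))^2$, the squared HS norm of the first factor equals $\sum_n(\lambda_n+k^2)^{-1}\|u_\ell\phi_n\|_{L^2}^2$; using the uniform bound $\|\phi_n\|_{L^\infty}=\ell^{-1/2}$, this is controlled by $\ell^{-1}\|V\|_{L^1}\sum_n(\lambda_n+k^2)^{-1}<\infty$, and similarly for the second factor. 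The product of two Hilbert--Schmidt operators is trace class, and one identifies this product with the closure $\ol{u_\ell\Rdko v_\ell}$. Passage from $z=-k^2$ to general $z\in\rho(H^{\free}_{\ell,D})$ is then effected by the first resolvent identity, which preserves $\cB_1$ membership.

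The mildly technical obstacle throughout is the closure bookkeeping: $u_\ell$ and $v_\ell$ are in general unbounded multiplication operators on $L^2((-\ell,\ell))$, so one must justify at each step that the formal product extends to a bounded operator on all of $L^2((-\ell,\ell))$ and identify that extension with the closure. The Sobolev embedding $H^1_0((-\ell,\ell))\hookrightarrow C([-\ell,\ell])$, combined with the fact that multiplication by an $L^2$ weight sends $C([-\ell,\ell])$ into $L^2((-\ell,\ell))$ by Cauchy--Schwarz, suffices to carry this through without incident.
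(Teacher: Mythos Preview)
Your proposal is correct. The paper itself does not supply a proof of this lemma; it simply records the result with a pointer to \cite[(2.69), (3.12), and (3.14)]{GN12}, so there is no in-paper argument to compare against in detail.

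What you have written is a clean, self-contained direct proof along entirely standard lines: the Hilbert--Schmidt memberships follow from the joint continuity (hence boundedness) of the Dirichlet Green's function on the compact square $[-\ell,\ell]^2$ together with $u_\ell,v_\ell\in L^2((-\ell,\ell))$, and the trace-class membership follows from the factorization through $(H_{\ell,D}^{\free}+k^2)^{-1/2}$ combined with the explicit eigenfunction expansion and the summability of $\sum_n(\lambda_n+k^2)^{-1}$. This is essentially the argument one would expect to find in the cited reference, so your approach is in the same spirit as the literature the paper defers to; you have simply filled in what the paper leaves implicit.
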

%%%%%%%%%%%

The analogue of Lemma \ref{l2.9} for $H_{\ell,\varphi,R}^{\free}$ is obtained as a consequence of Theorem \ref{t2.6}.

%%%%%%%%%%%
\begin{lemma}\lb{l2.10}
If Hypothesis \ref{h2.1} holds and $\ell\in \bbN$, then
\begin{align}
u_{\ell}\Rpzo,\overline{\Rpzo v_{\ell}}\in \cB_2\big(L^2((-\ell,\ell))\big),&\lb{2.29b}\\
\overline{u_{\ell}\Rpzo v_{\ell}}\in \cB_1\big(L^2((-\ell,\ell))\big),&\quad z\in\rho\big(H_{\ell,\varphi,R}^{\free}\big).\lb{2.30b}
\end{align}
\end{lemma}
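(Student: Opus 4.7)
The plan is to reduce Lemma \ref{l2.10} to Lemma \ref{l2.9} via Krein's resolvent identity (Theorem \ref{t2.6}) by analyzing the rank-two correction $P^{\free}_{\ell,\varphi,R}(z)$ explicitly. Since $H^{\free}_{\ell,D}$ is nonnegative, $\rho\big(H^{\free}_{\ell,D}\big)\supseteq \bbC\setminus[0,\infty)$, so the intersection $\rho\big(H^{\free}_{\ell,\varphi,R}\big)\cap \rho\big(H^{\free}_{\ell,D}\big)$ contains every nonreal $z$ and in particular is nonempty. I would first establish the claims for $z$ in this intersection and then extend to general $z\in\rho\big(H^{\free}_{\ell,\varphi,R}\big)$ via the first resolvent identity.

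For $z$ in the intersection, Theorem \ref{t2.6} with $V=0$ gives
$$\Rpzo = \Rdzo + P^{\free}_{\ell,\varphi,R}(z),$$
where $P^{\free}_{\ell,\varphi,R}(z)$ is a sum of four rank-one operators built from the solutions $\psi^{\free}_{\ell,m}(z,\dott)$ as in \eqref{2.28b}. Left-multiplying by $u_\ell$, the Dirichlet term $u_\ell\Rdzo$ lies in $\cB_2\big(L^2((-\ell,\ell))\big)$ by Lemma \ref{l2.9}. For the correction, each $\psi^{\free}_{\ell,m}(z,\dott)$ satisfies $-\psi''=z\psi$ on the compact interval $[-\ell,\ell]$ and is therefore continuous and bounded there. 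Since $V_\ell\in L^1((-\ell,\ell))$ forces $u_\ell,v_\ell\in L^2((-\ell,\ell))$, the vectors $u_\ell\psi^{\free}_{\ell,m}(z,\dott)$ and $v_\ell\psi^{\free}_{\ell,n}(\overline{z},\dott)$ lie in $L^2((-\ell,\ell))$, so each rank-one summand of $u_\ell P^{\free}_{\ell,\varphi,R}(z)$ is trace class. This gives $u_\ell\Rpzo\in \cB_2\big(L^2((-\ell,\ell))\big)$, i.e.\ the first half of \eqref{2.29b}; right-multiplying the Krein identity by $v_\ell$ instead and repeating the argument gives $\overline{\Rpzo v_\ell}\in \cB_2\big(L^2((-\ell,\ell))\big)$. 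For \eqref{2.30b}, the sandwiched identity
$$\overline{u_\ell\Rpzo v_\ell} = \overline{u_\ell\Rdzo v_\ell} + \overline{u_\ell P^{\free}_{\ell,\varphi,R}(z)v_\ell}$$
lies in $\cB_1\big(L^2((-\ell,\ell))\big)$: its first term by Lemma \ref{l2.9}, and its second as the closure of a finite sum of rank-one operators of the form $\langle v_\ell\psi^{\free}_{\ell,n}(\overline{z},\dott),\dott\rangle_{L^2((-\ell,\ell))}\, u_\ell\psi^{\free}_{\ell,m}(z,\dott)$, each trivially in $\cB_1$ since both constituent vectors are in $L^2((-\ell,\ell))$.

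To promote these conclusions from $z\in \rho\big(H^{\free}_{\ell,\varphi,R}\big)\cap\rho\big(H^{\free}_{\ell,D}\big)$ to an arbitrary $z\in \rho\big(H^{\free}_{\ell,\varphi,R}\big)$ (which may include isolated eigenvalues of $H^{\free}_{\ell,D}$), I would fix $z_0$ in the intersection and apply the first resolvent identity
$$\Rpzo = \big(H^{\free}_{\ell,\varphi,R}-z_0 I_{L^2((-\ell,\ell))}\big)^{-1}\bigl[I_{L^2((-\ell,\ell))}+(z-z_0)\Rpzo\bigr],$$
exploiting the two-sided ideal property of $\cB_1$ and $\cB_2$ under composition with bounded operators. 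The main obstacle---though still minor---will be a careful treatment of closures, since $u_\ell$ and $v_\ell$ are merely in $L^2$ and hence unbounded as multiplication operators; this is resolved by rewriting each rank-one summand of $u_\ell P^{\free}_{\ell,\varphi,R}(z)v_\ell$ in the form above, with both constituent vectors explicitly in $L^2((-\ell,\ell))$, so that its closure is just the bounded rank-two operator so defined.
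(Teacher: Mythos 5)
Your proposal is correct and follows essentially the same route as the paper: apply the Krein identity of Theorem \ref{t2.6} at a point of $\rho\big(H_{\ell,\varphi,R}^{(0)}\big)\cap\rho\big(H_{\ell,D}^{(0)}\big)$ (the paper simply takes $z=i$), note that the correction $P_{\ell,\varphi,R}^{\free}$ sandwiched by $u_\ell,v_\ell$ is rank two with both constituent vectors in $L^2((-\ell,\ell))$ because the $\psi_{\ell,m}^{\free}$ are bounded and $u_\ell,v_\ell\in L^2((-\ell,\ell))$, invoke Lemma \ref{l2.9} for the Dirichlet terms, and extend to all $z\in\rho\big(H_{\ell,\varphi,R}^{(0)}\big)$ by the first resolvent identity. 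The only step to tighten is the trace-class extension: since $v_\ell$ is unbounded you cannot pull the bounded factor $I_{L^2((-\ell,\ell))}+(z-z_0)\Rpzo$ out by the ideal property alone, so you should instead expand as $\overline{u_\ell\Rpzo v_\ell}=\overline{u_\ell\big(H_{\ell,\varphi,R}^{(0)}-z_0I_{L^2((-\ell,\ell))}\big)^{-1}v_\ell}+(z-z_0)\,u_\ell\big(H_{\ell,\varphi,R}^{(0)}-z_0I_{L^2((-\ell,\ell))}\big)^{-1}\,\overline{\Rpzo v_\ell}$ and use that the last term is a product of two Hilbert--Schmidt operators already supplied by your $\cB_2$ conclusions, exactly as in the paper's \eqref{2.37a}.
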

%%%%%%%%%%%
\begin{proof}
Let $\ell \in \bbN$.  Taking $z=i$ (and $V_{\ell}\equiv 0$), \eqref{2.27a} implies
\begin{equation}\lb{2.31a}
\begin{split}
u_{\ell}\Rpio &= u_{\ell}\Rdio\\
&\quad - \sum_{m,n=1}^2 \Big[K_{\ell,\varphi,R}^{\free}(i)^{-1}\Big]_{m,n}\big\langle \psi_{\ell,n}^{\free}(-i,\dott),\dott\big\rangle_{L^2((-\ell,\ell))}u_{\ell}\psi_{\ell,m}^{\free}(i,\dott).
\end{split}
\end{equation}
Since $u_{\ell}\psi_{\ell,m}^{\free}(i,\dott)\in L^2((-\ell,\ell))$, $m\in \{1,2\}$, the sum on the right-hand side in \eqref{2.31a} is a rank-two operator in $L^2((-\ell,\ell))$.  Thus, the right-hand side of \eqref{2.31a} belongs to $\cB_2\big(L^2((-\ell,\ell))\big)$ by Lemma \ref{l2.9}.  If $z\in \rho\big(H_{\ell,\varphi,R}^{\free}\big)$, the first resolvent identity implies
\begin{equation}
\begin{split}
&u_{\ell}\Rpzo\\
&\quad = u_{\ell}\Rpio\Big[I_{L^2((-\ell,\ell))} + (z-i)\Rpzo\Big],
\end{split}
\end{equation}
which is a product of operators in $\cB_2\big(L^2((-\ell,\ell))\big)$ and $\cB\big(L^2((-\ell,\ell))\big)$, respectively, and therefore belongs to $\cB_2\big(L^2((-\ell,\ell))\big)$.  The first containment in \eqref{2.29b} follows.  The same argument \textit{mutatis mutandis} shows
\begin{equation}\lb{2.33a}
v_{\ell}\Rpzo \in \cB_2\big(L^2((-\ell,\ell))\big),\quad z\in \rho\big(H_{\ell,\varphi,R}^{\free}\big).
\end{equation}
In consequence, for each $z\in \rho\big(H_{\ell,\varphi,R}^{\free}\big)$,
\begin{equation}
\ol{\Rpzo v_{\ell}} = \Big[v_{\ell}\big(H_{\ell,\varphi,R}^{\free} - \ol{z}I_{L^2((-\ell,\ell))}\big)^{-1} \Big]^* \in \cB_2\big(L^2((-\ell,\ell))\big),
\end{equation}
which is the second containment in \eqref{2.29b}.

Multiplying both sides of \eqref{2.31a} by $v_{\ell}$ from the right and taking closures yields
\begin{align}
\ol{u_{\ell}\Rpio v_{\ell}} &= \ol{u_{\ell}\Rdio v_{\ell}}\lb{2.36a}\\
&\quad - \sum_{m,n=1}^2 \Big[K_{\ell,\varphi,R}^{\free}(i)^{-1}\Big]_{m,n}\big\langle v_{\ell}\psi_{\ell,n}^{\free}(-i,\dott),\dott\big\rangle_{L^2((-\ell,\ell))}u_{\ell}\psi_{\ell,m}^{\free}(i,\dott).\no
\end{align}
Since $v_{\ell}\psi_{\ell,m}^{\free}(-i,\dott),u_{\ell}\psi_{\ell,m}^{\free}(i,\dott)\in L^2((-\ell,\ell))$, $j\in \{1,2\}$, the sum on the right-hand side in \eqref{2.36a} is a rank-two operator in $L^2((-\ell,\ell))$.  Thus, the right-hand side of \eqref{2.36a} belongs to $\cB_1\big(L^2((-\ell,\ell))\big)$ by Lemma \ref{l2.9}.  If $z\in \rho\big(H_{\ell,\varphi,R}^{\free}\big)$, the first resolvent identity implies
\begin{align}\lb{2.37a}
&\ol{u_{\ell}\Rpzo v_{\ell}}\\
&\quad = \ol{u_{\ell}\Rpio v_{\ell}}+ (z-i) u_{\ell}\Rpio \ol{\Rpzo v_{\ell}}.\no
\end{align}
The first term on the right-hand side in \eqref{2.37a} was shown above to belong to $\cB_1\big(L^2((-\ell,\ell))\big)$.  The second term on the right-hand side in \eqref{2.37a} belongs to $\cB_1\big(L^2((-\ell,\ell))\big)$ since, by \eqref{2.29b}, it is a product of two operators in $\cB_2\big(L^2((-\ell,\ell)\big)$.  The containment in \eqref{2.30b} follows.
\end{proof}
%%%%%%%%%%%

%%%%%%%%%%%
\begin{remark}\lb{r2.12}
If $\ell\in \bbN$, and $k>(1+N_R)^{1/2}$, then upon replacing $V_{\ell}$ by $|V_{\ell}|$, Lemma \ref{l2.10} implies
\begin{equation}
\begin{split}
&|V_{\ell}|^{1/2}\big(H_{\ell,\varphi,R}^{\free}+k^2I_{L^2((-\ell,\ell))}\big)^{-1/2}\ol{\big(H_{\ell,\varphi,R}^{\free}+k^2I_{L^2((-\ell,\ell))}\big)^{-1/2}|V_{\ell}|^{1/2}}\\
&\quad = \ol{|V_{\ell}|^{1/2}\big(H_{\ell,\varphi,R}^{\free}+k^2I_{L^2((-\ell,\ell))}\big)^{-1}|V_{\ell}|^{1/2}}\in \cB_1\big(L^2((-\ell,\ell))\big),
\end{split}
\end{equation}
which implies $|V_{\ell}|^{1/2}\big(H_{\ell,\varphi,R}^{\free}+k^2I_{L^2((-\ell,\ell))}\big)^{-1/2}\in \cB_2\big(L^2((-\ell,\ell))\big)$.  Therefore, $|V_{\ell}|$ is relatively form compact, hence infinitesimally form bounded, with respect to $H_{\ell,\varphi,R}^{\free}$.  In particular, the positive and negative parts of $V_{\ell}$ are infinitesimally form bounded with respect to $H_{\ell,\varphi,R}^{\free}$ for $\ell\in \bbN$. \hfill$\diamond$
\end{remark}
%%%%%%%%%%%

In the applications below, the results of Theorem \ref{t2.6} are most often needed in the free case $V=0$ a.e.~on $\bbR$ and for $z=-k^2$ with $k>0$ taken sufficiently large.  In the free case and for negative values of $z$, the solutions determined by \eqref{2.21} have the form
\begin{equation}
\psi_{\ell,m}^{\free}(-k^2,x) = \frac{1}{2}\bigg[\frac{\cosh{(kx)}}{\cosh{(k\ell)}} + (-1)^{m-1}\frac{\sinh{(kx)}}{\sinh{(k\ell)}} \bigg],\quad x\in [-\ell,\ell],\, m\in \{1,2\},\, k\in(0,\infty),
\end{equation}
and the matrix \eqref{2.22} reduces to
\begin{align}
K_{\ell,\varphi,R}^{\free}(-k^2)=
\begin{pmatrix}
\frac{R_{2,2}}{R_{1,2}} - \frac{k}{2}[\coth{(k\ell)} + \tanh{(k\ell)}]   &   \frac{-1}{e^{-i\varphi}R_{1,2}} - \frac{k}{2}[\tanh{(k\ell)}-\coth{(k\ell)}]\\
\frac{-1}{e^{i\varphi}R_{1,2}} - \frac{k}{2}[\tanh{(k\ell)} - \coth{(k\ell)}]   &   \frac{R_{1,1}}{R_{1,2}} - \frac{k}{2}[\coth{(k\ell)} + \tanh{(k\ell)}]
\end{pmatrix},&\lb{2.26}\\
k\in\big((1+N_R)^{1/2},\infty\big).&\no
\end{align}
Lemma \ref{l2.4} ensures that $z=-k^2 \in \rho\big(H_{\ell,\varphi,R}^{\free}\big)$ for all $\ell \in \bbN$ when $k > (1 + N_R)^{1/2}$.  Therefore, since $H_{\ell,D}^{\free}$ is nonnegative for all $\ell\in \bbN$, the condition on $k$ in \eqref{2.26} ensures that $z=-k^2 \in \rho\big(H_{\ell,\varphi,R}^{\free}\big)\cap \rho\big(H_{\ell,D}^{\free}\big)$ for all $\ell \in \bbN$.

The limiting behaviors of the entries of the inverse matrix $K_{\ell,\varphi,R}^{\free}(-k^2)^{-1}$ as $k\to \infty$ with $\ell\in \bbN$ held fixed and as $\ell\to \infty$ with $k$ taken sufficiently large and held fixed will also play an important role.  Upon inspection, \eqref{2.26}, implies
\begin{equation}\lb{2.27}
\det\Big(K_{\ell,\varphi,R}^{\free}(-k^2)\Big) \underset{k\to \infty}{\sim} k^2\, \text{ for each fixed $\ell\in \bbN$}.
\end{equation}
Taking the inverse of the matrix in \eqref{2.26} and applying the relation in \eqref{2.27} yields an asymptotic estimate for the entries of $K_{\ell,\varphi,R}^{\free}(-k^2)^{-1}$:
\begin{equation}\lb{2.42}
\Big[K_{\ell,\varphi,R}^{\free}(-k^2)^{-1}\Big]_{m,n} \underset{k\to \infty}{=}O(1/k)\,\text{ for each fixed $\ell\in \bbN$}.
\end{equation}
In addition, for each fixed $k\in \big((1+N_R)^{1/2},\infty\big)$, \eqref{2.26} and $\det{(R)}=1$ imply
\begin{align}
\lim_{\ell\to \infty}\det\Big(K_{\ell,\varphi,R}^{\free}(-k^2)\Big) = k^2- \frac{R_{1,1}+R_{2,2}}{R_{1,2}}k + \frac{R_{2,1}}{R_{1,2}} \eqqcolon p_{R}^{\free}(k).
\end{align}
Elementary algebraic manipulations, again using $\det{(R)}=1$, reveal that the discriminant of the polynomial $p_R^{\free}$ is positive.  If $k_R^{\free}$ denotes the largest root of $p_R^{\free}$, then
\begin{equation}\lb{2.30}
\lim_{\ell \to \infty}\det{\left(K_{\ell,\varphi,R}^{\free}(-k^2)\right)} \neq 0\, \text{ if $k>\max{\left\{k_R^{\free},(1+N_R)^{1/2}\right\}}$}.
\end{equation}
For fixed $k$, each entry of the matrix in \eqref{2.26} has a finite limit as $\ell \to \infty$.  Consequently, \eqref{2.30} implies that each entry of $K_{\ell,\varphi,R}^{\free}(-k^2)^{-1}$ has a finite limit as $\ell \to \infty$.  In particular,
\begin{equation}\lb{2.31}
\Big[K_{\ell,\varphi,R}^{\free}(-k^2)^{-1}\Big]_{m,n} \underset{\ell\to \infty}{=} O(1)\, \text{ for $m,n\in\{1,2\}$ and each fixed $k>\max{\left\{k_R^{\free},(1+N_R)^{1/2}\right\}}$}.
\end{equation}

%%%%%%%%%%%
\begin{example}
In general, the discriminant of $p_R^{\free}$ is $\big[(R_{1,1}-R_{2,2})^2 + 4\big]/R_{1,2}^2$, so the largest root of the polynomial $p_R^{\free}$ is
\begin{equation}
k_R^{\free} = \frac{1}{2}\bigg\{\frac{R_{1,1}+R_{2,2}}{R_{1,2}} + \frac{[(R_{1,1}-R_{2,2})^2+4]^{1/2}}{|R_{1,2}|} \bigg\}.
\end{equation}
Choosing
\begin{equation}
R=R_{\alpha}\coloneqq
\begin{pmatrix}
0 & \alpha\\
-\alpha^{-1} & 0
\end{pmatrix},
\end{equation}
where $\alpha \in \bbR\backslash\{0\}$ is a parameter, results in
\begin{equation}
k_{R_{\alpha}}^{\free} = \frac{1}{|\alpha|},\quad (1+N_{R_{\alpha}})^{1/2} = \bigg(1+\frac{2}{|\alpha|} \bigg)^{1/2}.
\end{equation}
The function
\begin{equation}
\frac{1}{|\alpha|} - \bigg(1+\frac{2}{|\alpha|} \bigg)^{1/2},\quad \alpha\in \bbR\backslash\{0\},
\end{equation}
exhibits a sign change at $\alpha = \sqrt{2}-1$.  Therefore, depending on $R$, $\max\Big\{k_R^{\free},(1+N_R)^{1/2}\Big\}$ can be either of $k_R^{\free}$ or $(1+N_R)^{1/2}$.
\end{example}
%%%%%%%%%%%

%%%%%%%%%%%%%%%%%%%%%%%%%%%%%%%
%%%%%%%%%%%%%%%%%%%%%%%%%%%%%%%
\section{Convergence Results for Resolvent Operators} \lb{s3}
%%%%%%%%%%%%%%%%%%%%%%%%%%%%%%%
%%%%%%%%%%%%%%%%%%%%%%%%%%%%%%%

The Hilbert spaces $L^2((-\ell,\ell))$ vary with $\ell \in \bbN$.  Therefore, we introduce the following $\ell$-dependent direct sum to move to the fixed $\ell$-independent Hilbert space $L^2(\bbR)$.  If $\ell \in \bbN$, $f\in L^2((-\ell,\ell))$, and $g\in L^2(\bbR\setminus(-\ell,\ell))$, then we define $f\opl g$ by
\begin{equation}
(f\opl g)(x)\coloneqq
\begin{cases}
f(x)& \text{for a.e.~$x\in(-\ell,\ell)$},\\
g(x)& \text{for a.e.~$x\in \bbR\setminus(-\ell,\ell)$}.
\end{cases}
\end{equation}
By additivity of the Lebesgue integral, $f\opl g \in L^2(\bbR)$ and
\begin{equation}
\|f\opl g\|_{L^2(\bbR)}^2 = \|f\|_{L^2((-\ell,\ell))}^2 + \|g\|_{L^2(\bbR\setminus (-\ell,\ell))}^2.
\end{equation}
Conversely, if $h\in L^2(\bbR)$, then $h$ can be expressed as
\begin{equation}
h = f \opl g,\, \text{ where $f=h|_{(-\ell,\ell)}$ and $g=h|_{\bbR\setminus (-\ell,\ell)}$}.
\end{equation}
In this way, for each $\ell\in \bbN$,
\begin{equation}\lb{3.4}
L^2(\bbR) = L^2((-\ell,\ell)) \opl L^2(\bbR\backslash(-\ell,\ell)).
\end{equation}
By additivity of the Lebesgue integral if $h_1,h_2\in L^2(\bbR)$ with
\begin{equation}
h_j = f_j \opl g_j,\quad f_j\in L^2((-\ell,\ell)),\quad g_j\in L^2(\bbR\backslash(-\ell,\ell)),\quad j\in \{1,2\},
\end{equation}
then
\begin{equation}
\langle h_1,h_2 \rangle_{L^2(\bbR)} = \langle f_1,f_2 \rangle_{L^2((-\ell,\ell))} + \langle g_1,g_2 \rangle_{L^2(\bbR\backslash(-\ell,\ell))}.
\end{equation}
Finally, if
\begin{equation}
S\colon \dom{(S)}\subseteq L^2((-\ell,\ell)) \to L^2((-\ell,\ell))
\end{equation}
and 
\begin{equation}
T\colon \dom{(T)}\subseteq L^2(\bbR\backslash(-\ell,\ell))\to L^2(\bbR\backslash(-\ell,\ell))
\end{equation}
are linear operators, then their direct sum $S\opl T$ with respect to the decomposition \eqref{3.4} is defined in the standard way by
\begin{equation}
(S\opl T)h\coloneqq (Sf)\opl (Tg),\quad h=f\opl g\in \dom{(S\opl T)} \coloneqq \dom{(S)} \opl \dom{(T)}.
\end{equation}

Under this direct sum formalism, it is known that the sequence $\left\{H_{\ell,D}^{(0)} \opl 0\right\}_{\ell = 1}^\infty$ converges to the free Schr\"odinger operator $H^{\free}$ in the strong resolvent sense as $\ell\to \infty$.

%%%%%%%%%%%
\begin{lemma}[Lemma 3.1 in \cite{GN12}]\lb{l3.1}
If Hypothesis \ref{h2.1} holds, then for each fixed $ z \in \mathbb{C} \setminus [0,\infty)$,
\begin{equation}\lb{3.10}
\stlim_{\ell \to \infty}{\left( \left[ H_{\ell,D}^{(0)} \opl 0 \right] - z I_{L^2(\mathbb{R})} \right)^{-1}} = \Rrzo.
\end{equation}
In particular, the sequence $\left\{H_{\ell,D}^{(0)} \opl 0\right\}_{\ell = 1}^\infty$ converges to $H^{(0)}$ in the strong resolvent sense.
\end{lemma}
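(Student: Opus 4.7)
Since $H^{\free}$ is nonnegative and each operator $H_{\ell,D}^{\free}\opl 0$ is also nonnegative (the Dirichlet free operator direct-summed with the zero operator), the resolvents at $z=-k^2$ with any $k>0$ are bounded in operator norm by $k^{-2}$, uniformly in $\ell\in\bbN$. Moreover, strong convergence of resolvents at a single $z_0\in \rho(H^{\free})$ extends to every $z\in\bbC\setminus[0,\infty)$ via the first resolvent identity and the uniform norm bound. Therefore it suffices to prove \eqref{3.10} at $z=-k^2$ for one fixed $k>0$.

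The plan is to work at the level of integral kernels. The kernel of $\Rrko$ is
\begin{equation*}
G^{\free}(x,y;-k^2) = \frac{1}{2k}e^{-k|x-y|},\quad (x,y)\in \bbR^2,
\end{equation*}
while on $(-\ell,\ell)^2$ the Dirichlet Green's function reads
\begin{equation*}
G^{\free}_{\ell,D}(x,y;-k^2) = \frac{\cosh\!\big(k(2\ell - |x-y|)\big) - \cosh(k(x+y))}{2k\sinh(2k\ell)}.
\end{equation*}
Two properties of these kernels drive the argument. First, for each fixed $(x,y)\in \bbR^2$ and all sufficiently large $\ell$, extracting the factor $e^{2k\ell}$ in numerator and denominator yields the pointwise limit $G_{\ell,D}^{\free}(x,y;-k^2)\to G^{\free}(x,y;-k^2)$. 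Second, one has the $\ell$-uniform pointwise bound $0\le G_{\ell,D}^{\free}(x,y;-k^2)\le G^{\free}(x,y;-k^2)$ on $(-\ell,\ell)^2$; this follows from the elementary identity
\begin{equation*}
\cosh\!\big(k(2\ell-|x-y|)\big)-e^{-k|x-y|}\sinh(2k\ell) = e^{-2k\ell}\cosh(k|x-y|),
\end{equation*}
together with $e^{-2k\ell}\cosh(k|x-y|)\le 1 \le \cosh(k(x+y))$ on $(-\ell,\ell)^2$.

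To upgrade kernel convergence to strong operator convergence, I would test first on $f\in C_c(\bbR)$ with $\operatorname{supp}(f)\subseteq [-M,M]$. For $\ell>M$,
\begin{equation*}
\Big(\big[H_{\ell,D}^{\free}\opl 0\big]+k^2I_{L^2(\bbR)}\Big)^{-1}f(x) = \chi_{(-\ell,\ell)}(x)\int_{-M}^{M} G_{\ell,D}^{\free}(x,y;-k^2)\,f(y)\,\mathrm{d}y,
\end{equation*}
since the $L^2(\bbR\setminus(-\ell,\ell))$-component of $f$ vanishes. An application of dominated convergence in $y$, with envelope $G^{\free}(x,\cdot\,;-k^2)|f|\in L^1(\bbR)$, shows that the pointwise difference with $(\Rrko f)(x)$ tends to zero for every $x\in \bbR$. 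The same kernel domination (together with the fact that outside $(-\ell,\ell)$ the difference just equals $(\Rrko f)(x)$, which goes to zero a.e.) bounds the squared difference by $9\big(\Rrko|f|\big)^2\in L^1(\bbR)$, so a second application of dominated convergence in $x$ yields $L^2$-convergence. Density of $C_c(\bbR)$ in $L^2(\bbR)$ combined with the uniform resolvent bound $k^{-2}$ then extends convergence to all $f\in L^2(\bbR)$.

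The main obstacle is the pointwise kernel domination $G_{\ell,D}^{\free}\le G^{\free}$; once this elementary but essential bound is in hand, the pointwise kernel limit and the two invocations of dominated convergence are routine, and the conclusion follows.
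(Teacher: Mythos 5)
Your argument is correct, but it is worth noting that the paper itself does not prove this lemma at all: it is imported verbatim from \cite{GN12} (Lemma 3.1 there), and the only place the paper engages with this type of statement is Lemma \ref{l3.2b}, where the Dirichlet result is taken as given and the coupled-boundary-condition case is reduced to it via the Krein identity. What you have supplied is a self-contained, elementary proof of the Dirichlet statement by explicit Green's functions, which is in the spirit of \cite{GN12} and Borovyk--Makarov rather than of the present paper. Your key ingredients check out: the Dirichlet Green's function on $(-\ell,\ell)$ is indeed $\sinh(k(\ell+x_<))\sinh(k(\ell-x_>))/[k\sinh(2k\ell)]$, which equals your $\cosh$-difference expression; the identity $\cosh(k(2\ell-|x-y|))-e^{-k|x-y|}\sinh(2k\ell)=e^{-2k\ell}\cosh(k|x-y|)$ is correct and yields the essential uniform domination $0\le G^{\free}_{\ell,D}\le G^{\free}$; the two applications of dominated convergence and the density/uniform-bound argument ($\|\cdot\|\le k^{-2}$ uniformly in $\ell$, since $H^{\free}_{\ell,D}\opl 0\ge 0$) are sound; and the reduction to a single $z=-k^2$ via the first resolvent identity together with the uniform resolvent bound is exactly the device the paper itself invokes in the proof of Lemma \ref{l3.2b}, so it is at the same level of rigor. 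The trade-off: the citation keeps the paper short and lets all $\ell\to\infty$ analysis be concentrated on the rank-two Krein correction, whereas your route makes the foundational Dirichlet convergence transparent and quantitative (the pointwise kernel domination would even give more, e.g.\ convergence of kernels dominated uniformly by the full-line kernel). One small wording slip, not a gap: in the second dominated-convergence step you say that outside $(-\ell,\ell)$ the difference equals $(H^{\free}+k^2)^{-1}f(x)$, ``which goes to zero a.e.''; that quantity does not depend on $\ell$, and what you actually use (and have) is that every fixed $x$ eventually lies in $(-\ell,\ell)$, together with the $\ell$-independent $L^1$ majorant $9\big[(H^{\free}+k^2I_{L^2(\bbR)})^{-1}|f|\big]^2$.
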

%%%%%%%%%%%

One can apply Theorem \ref{t2.6} to establish an analogue of Lemma \ref{l3.1} for the operators $H_{\ell,\varphi,R}^{\free}$.  Our proof employs the following elementary abstract fact, the proof of which is an exercise, for computing the operator or trace ideal norm of a rank one operator.

%%%%%%%%%%%
\begin{proposition}\lb{pA.1}
Let $\cH$ denote a Hilbert space with inner product $\langle \dott, \dott\rangle_{\cH}$.  If $\psi,\phi\in \cH$ and the rank one operator $A\colon \cH\to \cH$ is given by $A=\langle\psi, \dott\rangle_{\cH} \phi$, then for each $p\in[1,\infty)$,
\begin{equation}
\|A\|_{\cB_p(\cH)} = \|\psi\|_{\cH}\|\phi\|_{\cH} = \|A\|_{\cB(\cH)}.
\end{equation}
\end{proposition}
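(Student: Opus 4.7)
The plan is to compute all the relevant norms by reducing to the singular value decomposition of $A$, which is especially simple since $A$ has rank one. First I would dispense with the trivial case $\psi=0$ or $\phi=0$, in which $A=0$ and all norms on both sides vanish. Hence assume $\psi,\phi\neq 0$.

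Next I would identify the adjoint. Unraveling the inner product (linear in the second slot) yields $A^*g=\langle \phi,g\rangle_{\cH}\psi$ for $g\in\cH$. Composing gives
\begin{equation}
A^*Af=\|\phi\|_{\cH}^2\langle\psi,f\rangle_{\cH}\psi,\qquad f\in\cH,
\end{equation}
so $A^*A$ is a rank one, non-negative self-adjoint operator whose range is $\Span\{\psi\}$ and whose one nonzero eigenvalue is $\|\psi\|_{\cH}^2\|\phi\|_{\cH}^2$, with eigenvector $\psi$. Consequently $|A|=(A^*A)^{1/2}$ has the single nonzero singular value $s_1(A)=\|\psi\|_{\cH}\|\phi\|_{\cH}$, with $s_n(A)=0$ for $n\geq 2$.

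From this singular value data, both claimed norm identities follow at once. On the one hand,
\begin{equation}
\|A\|_{\cB(\cH)}=s_1(A)=\|\psi\|_{\cH}\|\phi\|_{\cH},
\end{equation}
and on the other hand, by the definition of the Schatten norms,
\begin{equation}
\|A\|_{\cB_p(\cH)}=\bigg(\sum_{n\geq 1}s_n(A)^p\bigg)^{1/p}=s_1(A)=\|\psi\|_{\cH}\|\phi\|_{\cH},\qquad p\in[1,\infty).
\end{equation}
This chain of equalities is exactly the claim. As a sanity check on the $\cB(\cH)$ computation one may alternatively estimate $\|Af\|_{\cH}=|\langle\psi,f\rangle_{\cH}|\,\|\phi\|_{\cH}\leq\|\psi\|_{\cH}\|\phi\|_{\cH}\|f\|_{\cH}$ by the Cauchy--Schwarz inequality with equality at $f=\psi/\|\psi\|_{\cH}$.

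There is no serious obstacle here; the entire argument is a one-line eigenvalue computation for $A^*A$, and the main point worth flagging is only the convention used for linearity of the inner product when computing $A^*$. The rank one structure makes the singular value list trivial, which then collapses every Schatten norm and the operator norm to the same number $\|\psi\|_{\cH}\|\phi\|_{\cH}$.
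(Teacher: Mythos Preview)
Your argument is correct: computing $A^*A$, reading off its single nonzero eigenvalue $\|\psi\|_{\cH}^2\|\phi\|_{\cH}^2$, and hence the single nonzero singular value $s_1(A)=\|\psi\|_{\cH}\|\phi\|_{\cH}$, immediately yields both norm identities. The paper does not supply a proof of this proposition, remarking only that ``the proof of which is an exercise,'' so there is nothing to compare against; your singular-value approach is exactly the standard verification one would expect.
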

%%%%%%%%%%%

The analogue of Lemma \ref{l3.1} for the operators $H_{\ell,\varphi,R}^{\free}$ is then stated as follows:

%%%%%%%%%%%
\begin{lemma}\lb{l3.2b}
If Hypothesis \ref{h2.1} holds and $k_{R,\max}\coloneqq \max\big\{k_R^{\free},(1+N_R)^{1/2}\big\}$, then for each fixed $ z \in \mathbb{C} \setminus [-k_{R,\max}^2,\infty)$,
\begin{equation}\lb{3.11b}
\stlim_{\ell \to \infty}{\left( \left[ H_{\ell,\varphi,R}^{(0)} \opl 0 \right] - z I_{L^2(\bbR)} \right)^{-1}} = \Rrzo.
\end{equation}
In particular, the sequence $\left\{H_{\ell,\varphi,R}^{(0)} \opl 0\right\}_{\ell = 1}^\infty$ converges to $H^{(0)}$ in the strong resolvent sense.
\end{lemma}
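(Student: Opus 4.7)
The plan is to apply Krein's resolvent identity (Theorem \ref{t2.6}) in the free case first at $z=-k^2$ with $k>k_{R,\max}$, giving
\begin{equation*}
\Rpko = \Rdko + P_{\ell,\varphi,R}^{\free}(-k^2)
\end{equation*}
on $L^2((-\ell,\ell))$. Under the decomposition \eqref{3.4}, on $L^2(\bbR\setminus(-\ell,\ell))$ both operators $H_{\ell,\varphi,R}^{\free}\opl 0$ and $H_{\ell,D}^{\free}\opl 0$ act as the zero operator, so the two resolvents on $L^2(\bbR)$ differ by $P_{\ell,\varphi,R}^{\free}(-k^2)\opl 0$. Since Lemma \ref{l3.1} already supplies
\begin{equation*}
\stlim_{\ell\to\infty}\big([H_{\ell,D}^{\free}\opl 0]+k^2I_{L^2(\bbR)}\big)^{-1} = \Rrko,
\end{equation*}
it suffices to show that $P_{\ell,\varphi,R}^{\free}(-k^2)\opl 0 \to 0$ strongly as $\ell\to\infty$ (for some, and hence all, $k>k_{R,\max}$), and then to propagate this to arbitrary $z\in \bbC\setminus[-k_{R,\max}^2,\infty)$.

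For the key step, decompose $P_{\ell,\varphi,R}^{\free}(-k^2)$ into its four rank-one summands from \eqref{2.28b} and analyze each separately. A direct computation from the explicit formula for $\psi_{\ell,m}^{\free}(-k^2,\dott)$ together with the elementary integrals of $\cosh^2(k\dott)$ and $\sinh^2(k\dott)$ over $[-\ell,\ell]$ shows that
\begin{equation*}
\|\psi_{\ell,m}^{\free}(-k^2,\dott)\|_{L^2((-\ell,\ell))}^2 = \tfrac{1}{4}\Big[\tfrac{\ell}{\cosh^2(k\ell)} + \tfrac{\tanh(k\ell)}{k} - \tfrac{\ell}{\sinh^2(k\ell)} + \tfrac{\coth(k\ell)}{k}\Big] \underset{\ell\to\infty}{\longrightarrow} \tfrac{1}{2k},
\end{equation*}
so these norms are $O(1)$. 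Combined with the coefficient estimate \eqref{2.31}, the operators $P_{\ell,\varphi,R}^{\free}(-k^2)\opl 0$ are uniformly bounded on $L^2(\bbR)$ by Proposition \ref{pA.1}. Thus, by a standard $3\varepsilon$-argument, strong convergence to $0$ on $L^2(\bbR)$ reduces to strong convergence on the dense subspace $C_c(\bbR)$.

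For $f\in C_c(\bbR)$ with $\text{supp}(f)\subseteq[-M,M]$ and $\ell>M$, the inner product $\langle \psi_{\ell,n}^{\free}(-k^2,\dott),f|_{(-\ell,\ell)}\rangle_{L^2((-\ell,\ell))}$ reduces to an integral over $[-M,M]$. On this fixed compact set, the bounds $\cosh(kx)/\cosh(k\ell)\leq e^{kM}/\cosh(k\ell)\to 0$ and $|\sinh(kx)|/\sinh(k\ell)\leq \sinh(kM)/\sinh(k\ell)\to 0$ as $\ell\to\infty$ imply $\psi_{\ell,n}^{\free}(-k^2,\dott)\to 0$ uniformly on $[-M,M]$. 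Hence the inner product tends to $0$, and since each rank-one summand outputs a multiple of $\psi_{\ell,m}^{\free}(-k^2,\dott)$ with coefficient tending to $0$ and vector of uniformly bounded $L^2$-norm, each rank-one piece tends strongly to $0$. Summing over the four pieces and appealing to the uniform bound finishes the proof at $z=-k^2$.

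Finally, to extend from $z=-k^2$ to arbitrary $z\in\bbC\setminus[-k_{R,\max}^2,\infty)$, I would invoke the standard Neumann-series argument: since the family $\{H_{\ell,\varphi,R}^{\free}\opl 0\}_{\ell\in\bbN}$ and $H^{\free}$ are all self-adjoint with spectra contained in $[-k_{R,\max}^2,\infty)$, the resolvents $\big([H_{\ell,\varphi,R}^{\free}\opl 0]+k^2 I\big)^{-1}$ are bounded uniformly in $\ell$ by $(k^2-k_{R,\max}^2)^{-1}$, so strong convergence at $z=-k^2$ propagates via the first resolvent identity to every $z$ in the connected set $\bbC\setminus[-k_{R,\max}^2,\infty)$, which is the complement of the spectrum. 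The main obstacle is the density-plus-uniform-bound argument in the middle step: routine once stated, but it crucially relies on both the $L^2$-bound on $\psi_{\ell,m}^{\free}$ (which could have failed had the norms grown with $\ell$) and the pointwise vanishing on compacta.
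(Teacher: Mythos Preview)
Your proof is correct and follows essentially the same approach as the paper: reduce to $z=-k^2$, apply Krein's identity to write the difference with the Dirichlet resolvent as $P_{\ell,\varphi,R}^{\free}(-k^2)\opl 0$, establish a uniform operator-norm bound via \eqref{2.31} and the explicit $L^2$-norm computation for $\psi_{\ell,m}^{\free}$, and then verify strong convergence to zero on a dense subspace. The only cosmetic difference is your choice of dense subspace---you use $C_c(\bbR)$ with uniform convergence of $\psi_{\ell,m}^{\free}$ on compacta, whereas the paper uses $L^1(\bbR)\cap L^2(\bbR)$ together with dominated convergence (dominating by $|f|$); both are equally valid and the remainder of the argument is the same.
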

%%%%%%%%%%%
\begin{proof}
It suffices to show
\begin{equation}\lb{3.12b}
\stlim_{\ell \to \infty}{\left( \left[ H_{\ell,\varphi,R}^{(0)} \opl 0 \right] + k^2 I_{L^2(\bbR)} \right)^{-1}} = \Rrko
\end{equation}
for some $k > k_{R,\max}$, as the statement in \eqref{3.11b} then follows from an application of the first resolvent identity.  Let $k > k_{R,\max}$, so that $-k^2 \in \rho\big(H_{\ell,\varphi,R}^{\free}\big) \cap \rho\big(H_{\ell,D}^{\free}\big) \cap \rho\big(H^{\free}\big)$ for all $\ell \in \bbN$.  By Theorem \ref{t2.6},
\begin{equation}\lb{3.13ba}
\begin{split}
\Big(\Big[H_{\ell,\varphi,R}^{(0)}\opl 0\Big]+k^2I_{L^2(\mathbb{R})}\Big)^{-1} = \Big(\Big[H_{\ell,D}^{(0)}\opl 0\Big]+k^2I_{L^2(\bbR)}\Big)^{-1} + \Big[P_{\ell,\varphi,R}^{(0)}(-k^2)\opl 0\Big],&\\
\ell\in \bbN.&
\end{split}
\end{equation}
In light of \eqref{3.10} and \eqref{3.13ba}, in order to show \eqref{3.12b}, it suffices to prove
\begin{equation}\lb{3.14ba}
\stlim_{\ell\to \infty}{\Big[P_{\ell,\varphi,R}^{(0)}(-k^2)\opl 0\Big]} = 0.
\end{equation}
To this end, note that the sequence of operators on the left-hand side in \eqref{3.14ba} is uniformly bounded with respect to $\ell\in \bbN$.  Indeed, by the triangle inequality and \eqref{2.31},
\begin{align}
&\Big\|P_{\ell,\varphi,R}^{(0)}(-k^2)\oplus_{\ell}0\Big\|_{\mathcal{B}(L^2(\mathbb{R}))}\no\\
&\quad =\Big\|P_{\ell,\varphi,R}^{(0)}(-k^2)\Big\|_{\mathcal{B}(L^2((-\ell,\ell)))}\no\\
&\quad \leq \sum_{m,n=1}^2{\Big|\Big[K_{\ell,\varphi,R}(-k^2)^{-1}\Big]_{m,n}\Big|\ \big\|\big\langle \psi_{\ell,m}^{(0)}(-k^2,\dott),\dott\big\rangle \psi_{\ell,n}^{(0)}(-k^2,\dott)\big\|_{\mathcal{B}(L^2((-\ell,\ell)))}}\no\\
&\,\; \underset{\ell\to \infty}{\leq} O(1)\sum_{m,n=1}^2{\big\|\big\langle \psi_{\ell,m}^{(0)}(-k^2,\dott),\dott\big\rangle \psi_{\ell,n}^{(0)}(-k^2,\dott)\big\|_{\mathcal{B}(L^2((-\ell,\ell)))}}\no\\
&\,\; \underset{\ell\to \infty}{=}O(1)\sum_{m,n=1}^2{\big\|\psi_{\ell,m}^{(0)}(-k^2,\dott)\big\|_{L^2((-\ell,\ell))}\big\|\psi_{\ell,n}^{(0)}(-k^2,\dott)\big\|_{L^2((-\ell,\ell))}},\lb{3.15ba}
\end{align}
where the final equality follows from Proposition \ref{pA.1}.  One infers that
\begin{align}
\big\|\psi_{\ell,m}^{(0)}(-k^2,\dott)\big\|_{L^2((-\ell,\ell))}^2&=\frac{1}{4}\int_{-\ell}^{\ell}{\bigg[\frac{\cosh{(kx)}}{\cosh{(k\ell)}}\pm\frac{\sinh{(kx)}}{\sinh{(k\ell)}}\bigg]^2\, \mathrm{d}x}\no\\
&=\frac{1}{4}\int_{-\ell}^{\ell}\bigg[\frac{\cosh^2{(kx)}}{\cosh^2{(k\ell)}}+\frac{\sinh^2{(kx)}}{\sinh^2{(k\ell)}}\bigg]\, \mathrm{d}x,\quad m\in \{1,2\},\, \ell\in \bbN,\lb{3.16b}
\end{align}
where the third term that arises from the expansion of the square in the first integral vanishes as it is an odd function being integrated over a finite symmetric interval. The last integral in \eqref{3.16b} is elementary and one obtains:
\begin{equation}\lb{3.17b}
\begin{split}
\big\|\psi_{\ell,m}^{(0)}(-k^2,\dott)\big\|_{L^2((-\ell,\ell))}^2=\frac{\tanh{(k\ell)}+\coth{(k\ell)}}{4k}+\frac{\ell}{4}\big[\sech^2{(k\ell)}-\csch^2{(k\ell)}\big],&\\
m\in \{1,2\},\,\ell\in \bbN.&
\end{split}
\end{equation}
In particular, \eqref{3.17b} implies
\begin{equation}\lb{3.18b}
\big\|\psi_{\ell,m}^{(0)}(-k^2,\dott)\big\|_{L^2((-\ell,\ell))} \underset{\ell\to \infty}{=}O(1),\quad m\in \{1,2\}.
\end{equation}
Combining \eqref{3.15ba} and \eqref{3.18b}, one obtains
\begin{equation}
\Big\|P_{\ell,\varphi,R}^{(0)}(-k^2)\oplus_{\ell}0\Big\|_{\mathcal{B}(L^2(\mathbb{R}))} \underset{\ell\to \infty}{=} O(1),
\end{equation}
so that $\Big\{P_{\ell,\varphi,R}^{\free}(-k^2)\opl 0\Big\}_{\ell=1}^{\infty}$ is a bounded sequence in $\cB\big(L^2(\bbR)\big)$.  Therefore, by \cite[Exercise 4.28]{We80}, in order to show \eqref{3.14ba}, it suffices to prove
\begin{equation}\lb{3.20}
\lim_{\ell\to \infty}\Big[P_{\ell,\varphi,R}^{\free}(-k^2)\opl 0\Big]f = 0,\quad f\in L^1(\bbR)\cap L^2(\bbR),
\end{equation}
since $L^1(\bbR)\cap L^2(\bbR)$ is dense in $L^2(\bbR)$.  If $f\in L^1(\mathbb{R})\cap L^2(\mathbb{R})$, then one obtains
\begin{align}
\big|\chi_{(-\ell,\ell)}(x)\psi_{\ell,m}^{\free}(-k^2,x)f(x)\big|&=\frac{1}{2}\bigg|\chi_{(-\ell,\ell)}(x)\bigg[\frac{\cosh{(kx)}}{\cosh{(k\ell)}}+(-1)^{m-1}\frac{\sinh{(kx)}}{\sinh{(k\ell)}}\bigg]f(x)\bigg|\no\\
&\leq |f(x)|\,\text{ for a.e.~$x\in \bbR$},\quad \ell\in \bbN,\, m\in\{1,2\}.
\end{align}
Since $\lim_{\ell\to\infty}{\psi_{\ell,m}^{\free}(-k^2,x)}=0$ for every $x\in\mathbb{R}$, it follows that
\begin{equation}
\lim_{\ell\to\infty}{\chi_{(-\ell,\ell)}(x)\psi_{\ell,m}^{\free}(-k^2,x)f(x)}=0\,\text{ for a.e.~$x\in \bbR$}.
\end{equation}
Hence, by the dominated convergence theorem,
\begin{equation}\lb{3.23}
\begin{split}
\lim_{\ell\to\infty}\big\langle \psi_{\ell,m}^{\free}(-k^2,\dott),f\big|_{(-\ell,\ell)}\big\rangle_{L^2((-\ell,\ell))}=\lim_{\ell\to\infty}{\int_{-\infty}^{\infty}{\chi_{(-\ell,\ell)}\psi_{\ell,m}^{\free}(-k^2,x)f(x)\, \mathrm{d}x}}=0,&\\
m\in\{1,2\}.&
\end{split}
\end{equation}
Finally, applying \eqref{2.28b}, \eqref{2.31}, \eqref{3.18b}, and \eqref{3.23}, one obtains
\begin{align}
&\Big\|\Big[P_{\ell,\varphi,R}^{\free}(-k^2) \opl 0 \Big]f\, \Big\|_{L^2(\bbR)}\\
&\quad = \Big\|P_{\ell,\varphi,R}^{\free}(-k^2)f\big|_{(-\ell,\ell)}\, \Big\|_{L^2((-\ell,\ell))}\no\\
&\quad \leq \sum_{m,n=1}^2 \Big|\Big[K_{\ell,\varphi,R}^{\free}(-k^2)^{-1}\Big]_{m,n}\Big|\Big|\big\langle \psi_{\ell,n}^{\free}(-k^2,\dott),f\big|_{(-\ell,\ell)}\big\rangle_{L^2((-\ell,\ell))}\Big|\big\|\psi_{\ell,m}^{\free}(-k^2,\dott)\big\|_{L^2((-\ell,\ell))}\no\\
&\,\; \underset{\ell\to \infty}{=} o(1),\no
\end{align}
and \eqref{3.20} follows.
\end{proof}
%%%%%%%%%%%

The following result, which studies the $z\to -\infty$ behavior of the extended Birman--Schwinger operator of $H_{\ell,D}^{\free}$ with $\ell\in \bbN$ fixed, is an immediate consequence of \cite[Lemma 3.8]{GN12}.

%%%%%%%%%%%
\begin{lemma}\lb{l3.3}
If Hypothesis \ref{h2.1} holds, then
\begin{equation}\lb{3.25c}
\lim_{z \to -\infty} \left\| \ol{u_\ell \Rdzo v_\ell} \oplus_\ell 0 \right\|_{\cB_1(L^2(\bbR))} = 0,\quad \ell\in \bbN.
\end{equation}
\end{lemma}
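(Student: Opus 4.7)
The approach is straightforward: reduce the assertion, via the isometric embedding of Schatten ideals under the direct sum $\oplus_\ell$ with the zero operator, to [GN12, Lemma 3.8], which supplies the needed $z\to -\infty$ decay on the finite interval.

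First I would record the elementary fact that if $\ell\in \bbN$ and $A\in \cB_1(L^2((-\ell,\ell)))$, then with respect to the orthogonal decomposition $L^2(\bbR)=L^2((-\ell,\ell))\oplus_\ell L^2(\bbR\setminus(-\ell,\ell))$ in \eqref{3.4}, the block-diagonal operator $A\oplus_\ell 0$ shares the nonzero singular values of $A$ (with the same multiplicities) and contributes only additional zero singular values from its trivial action on $L^2(\bbR\setminus(-\ell,\ell))$. Summing singular values then gives
\begin{equation*}
\|A\oplus_\ell 0\|_{\cB_1(L^2(\bbR))}=\|A\|_{\cB_1(L^2((-\ell,\ell)))}.
\end{equation*}

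By Lemma \ref{l2.9}, $\ol{u_\ell\Rdzo v_\ell}\in \cB_1(L^2((-\ell,\ell)))$ for every $z\in\rho(H_{\ell,D}^{\free})$, so with the choice $A=\ol{u_\ell\Rdzo v_\ell}$ the above identity specializes to
\begin{equation*}
\left\|\ol{u_\ell\Rdzo v_\ell}\oplus_\ell 0\right\|_{\cB_1(L^2(\bbR))}=\left\|\ol{u_\ell\Rdzo v_\ell}\right\|_{\cB_1(L^2((-\ell,\ell)))},\quad z\in \rho\big(H_{\ell,D}^{\free}\big),\;\ell\in\bbN.
\end{equation*}
Applying [GN12, Lemma 3.8], which asserts precisely that the right-hand side tends to $0$ as $z\to -\infty$ for each fixed $\ell\in\bbN$, yields \eqref{3.25c}.

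Because the entire analytic content, namely the $z\to -\infty$ vanishing of the $\cB_1$-norm of the extended Birman--Schwinger operator attached to $H_{\ell,D}^{\free}$, is imported wholesale from the cited lemma, there is no serious obstacle; the only work left is the routine verification that direct summation with the zero operator is isometric on the trace ideals, which is the entire point of writing the result in the extended $L^2(\bbR)$ setting in the first place.
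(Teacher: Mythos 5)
Your proposal is correct and matches the paper, which offers no separate argument at all: it simply records Lemma \ref{l3.3} as an immediate consequence of \cite[Lemma 3.8]{GN12}. Your only addition is the routine observation that $A\mapsto A\oplus_\ell 0$ preserves the nonzero singular values and hence the $\cB_1$-norm, which is exactly the ``immediate'' step the paper leaves implicit.
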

%%%%%%%%%%%

Using the Krein identity \eqref{2.27a}, \eqref{2.28b}, and the large parameter asymptotics \eqref{2.42}, one establishes the following analogue of Lemma \ref{l3.3} for the operator $H_{\ell,\varphi,R}^{\free}$.

%%%%%%%%%%%
\begin{lemma}\lb{l3.4}
If Hypothesis \ref{h2.1} holds, then
\begin{equation}\lb{3.26c}
\lim_{z \to -\infty} \left\| \ol{u_\ell \Rpzo v_\ell} \oplus_\ell 0 \right\|_{\cB_1(L^2(\bbR))} = 0,\quad \ell\in \bbN.
\end{equation}
\end{lemma}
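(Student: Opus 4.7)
The plan is to combine Krein's resolvent identity (Theorem \ref{t2.6}) in the free case with Lemma \ref{l3.3} and the asymptotic estimate \eqref{2.42} for the inverse matrix entries. The idea is that the difference $\ol{u_\ell \Rpko v_\ell} - \ol{u_\ell \Rdko v_\ell}$ is a rank-two operator whose trace norm inherits the $O(1/k)$ decay of the entries $\big[K_{\ell,\varphi,R}^{(0)}(-k^2)^{-1}\big]_{m,n}$; this is what turns the known Dirichlet result of Lemma \ref{l3.3} into the desired statement for the coupled operator $H_{\ell,\varphi,R}^{(0)}$.

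More concretely, fix $\ell\in\bbN$ and take $z=-k^2$ with $k>k_{R,\max}$ so that $-k^2\in\rho\big(H_{\ell,\varphi,R}^{(0)}\big)\cap\rho\big(H_{\ell,D}^{(0)}\big)$. Multiplying the free version of \eqref{2.27a} by $u_\ell$ on the left and $v_\ell$ on the right and taking closures, following verbatim the argument leading to \eqref{2.36a} in the proof of Lemma \ref{l2.10}, yields
\begin{equation*}
\ol{u_\ell \Rpko v_\ell} = \ol{u_\ell \Rdko v_\ell} - \sum_{m,n=1}^2\Big[K_{\ell,\varphi,R}^{(0)}(-k^2)^{-1}\Big]_{m,n}\big\langle v_\ell\psi_{\ell,n}^{(0)}(-k^2,\dott),\dott\big\rangle_{L^2((-\ell,\ell))}\,u_\ell\psi_{\ell,m}^{(0)}(-k^2,\dott).
\end{equation*}
Forming the direct sum with the zero operator on $L^2(\bbR\setminus(-\ell,\ell))$ and using that $\|A\opl 0\|_{\cB_1(L^2(\bbR))}=\|A\|_{\cB_1(L^2((-\ell,\ell)))}$ (since the singular values of $A\opl 0$ are those of $A$), the triangle inequality reduces the problem to separately bounding each term on the right.

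The first term $\ol{u_\ell \Rdko v_\ell}\opl 0$ vanishes in $\cB_1(L^2(\bbR))$ as $k\to\infty$ directly by Lemma \ref{l3.3}. For the rank-two contribution, Proposition \ref{pA.1} identifies the trace norm of each summand as $\big|[K_{\ell,\varphi,R}^{(0)}(-k^2)^{-1}]_{m,n}\big|\,\|u_\ell\psi_{\ell,m}^{(0)}(-k^2,\dott)\|_{L^2((-\ell,\ell))}\,\|v_\ell\psi_{\ell,n}^{(0)}(-k^2,\dott)\|_{L^2((-\ell,\ell))}$. The explicit formula for $\psi_{\ell,m}^{(0)}(-k^2,\dott)$ displayed just before \eqref{2.26}, together with $\cosh$ being even and increasing on $[0,\infty)$ and the analogous property of $|\sinh|$, yields the pointwise bound $|\psi_{\ell,m}^{(0)}(-k^2,x)|\leq 1$ for $x\in[-\ell,\ell]$ and $k>0$. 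Since $|u_\ell|^2=|v_\ell|^2=|V_\ell|$ and $V\in L^1(\bbR)$, this gives the uniform-in-$k$ bound $\|u_\ell\psi_{\ell,m}^{(0)}(-k^2,\dott)\|_{L^2((-\ell,\ell))}, \|v_\ell\psi_{\ell,n}^{(0)}(-k^2,\dott)\|_{L^2((-\ell,\ell))}\leq\|V\|_{L^1(\bbR)}^{1/2}$. Combined with the $O(1/k)$ decay from \eqref{2.42}, the rank-two piece has trace norm $O(1/k)$ and vanishes as $k\to\infty$, equivalently as $z=-k^2\to-\infty$.

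The main obstacle is mostly bookkeeping: one must carefully track the closure $\ol{u_\ell P_{\ell,\varphi,R}^{(0)}(-k^2) v_\ell}$ through the algebra of the rank-two operator when $u_\ell$ and $v_\ell$ are (possibly unbounded) multiplication operators. This was already handled in \eqref{2.36a} during the proof of Lemma \ref{l2.10}, and the same argument adapts here. With the closure in place, everything else is elementary: Lemma \ref{l3.3} supplies the Dirichlet term, \eqref{2.42} supplies the $O(1/k)$ decay of the matrix entries, and the trivial pointwise bound on $\psi_{\ell,m}^{(0)}(-k^2,\dott)$ controls the $L^2$-norms involving $u_\ell$ and $v_\ell$ via $V\in L^1(\bbR)$.
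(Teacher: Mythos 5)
Your proof is correct and follows essentially the same route as the paper: Krein's identity \eqref{2.27a} in the free case, the triangle inequality with Lemma \ref{l3.3} handling the Dirichlet term, and Proposition \ref{pA.1} together with the $O(1/k)$ estimate \eqref{2.42} controlling the rank-two correction. The only (harmless) difference is the final step: the paper shows $\big\|u_\ell\psi^{\free}_{\ell,m}(-k^2,\dott)\big\|_{L^2((-\ell,\ell))}\to 0$ as $k\to\infty$ by dominated convergence, whereas you use the pointwise bound $\big|\psi^{\free}_{\ell,m}(-k^2,x)\big|\le 1$ to get the uniform bound $\|V\|_{L^1(\bbR)}^{1/2}$ and rely solely on the $O(1/k)$ decay of the entries of $K^{\free}_{\ell,\varphi,R}(-k^2)^{-1}$ --- both arguments are valid.
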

%%%%%%%%%%%
\begin{proof}
Let $\ell \in \bbN$.  It suffices to show
\begin{equation}\lb{3.27c}
\lim_{k \to \infty} \left\| \overline{u_\ell \Rpko v_\ell} \right\|_{\cB_1(L^2((-\ell,\ell)))} = 0.
\end{equation}
By Theorem \ref{t2.6}, for $k>k_{R,\max}$,
\begin{equation}
\ol{u_{\ell}\Rpko v_{\ell}} = \ol{u_{\ell}\Rdko v_{\ell}} + \ol{u_{\ell}P_{\ell,\varphi,R}^{\free}(-k^2)v_{\ell}},
\end{equation}
and one verifies that
\begin{equation}\lb{3.29c}
\ol{u_{\ell}P_{\ell,\varphi,R}^{\free}(-k^2)v_{\ell}} = \sum_{m,n=1}^2\Big[K_{\ell,\varphi,R}^{\free}(-k^2)^{-1}\Big]_{m,n}\big\langle v_{\ell}\psi_{\ell,n}^{\free}(-k^2,\dott),\dott\big\rangle_{L^2((-\ell,\ell))}u_{\ell}\psi_{\ell,m}^{\free}(-k^2,\dott).
\end{equation}
To prove \eqref{3.29c} one observes that the operator on the right-hand side in \eqref{3.29c} is bounded on $L^2((-\ell,\ell))$ and that  $u_{\ell}P_{\ell,\varphi,R}^{\free}(-k^2)v_{\ell}$ coincides with this operator on the dense subspace $\dom{(v_{\ell})}$. 
 For $k>\max{\Big\{k_R^{\free},(1+N_R)^{1/2}\Big\}}$ one estimates:
\begin{align}
&\left\| \overline{u_\ell \Rpko v_\ell} \right\|_{\cB_1(L^2((-\ell,\ell)))}\no\\
&\quad \leq \left\|\overline{u_\ell\Rdko v_\ell}  \right\|_{\cB_1(L^2((-\ell,\ell)))} + \left\| \ol{u_\ell P_{\ell,\varphi,R}^{(0)}(-k^2) v_\ell} \right\|_{\cB_1(L^2((-\ell,\ell)))}.\lb{3.30c}
\end{align}
The first term on the right-hand side of the inequality in \eqref{3.30c} converges to zero as $k\to \infty$ by Lemma \ref{l3.3}.  Therefore, to establish \eqref{3.27c}, it suffices to show that
\begin{equation}\lb{3.31c}
\lim_{k \to \infty} \left\| \ol{u_\ell P_{\ell,\varphi,R}^{(0)}(-k^2) v_\ell} \right\|_{\cB_1(L^2((-\ell,\ell)))} = 0.
\end{equation}
To this end, \eqref{3.29c}, \eqref{2.42}, and Proposition \ref{pA.1} imply
\begin{equation}\lb{3.32c}
\begin{split}
&\left\| \ol{u_\ell P_{\ell,\varphi,R}^{(0)}(-k^2) v_\ell} \right\|_{\cB_1(L^2((-\ell,\ell)))}\\
&\,\;\underset{k\to \infty}{\leq}O(1/k)\sum_{m,n=1}^2\big\|v_{\ell}\psi_{\ell,n}^{\free}(-k^2,\dott) \big\|_{L^2((-\ell,\ell))}\big\|u_{\ell}\psi_{\ell,m}^{\free}(-k^2,\dott) \big\|_{L^2((-\ell,\ell))}.
\end{split}
\end{equation}
One infers that for any $k>0$,
\begin{align}
&\big\|u_\ell \psi_{\ell,m}^{\free}(-k^2,\dott)\big\|_{L^2((-\ell,\ell))}^2\lb{3.33c}\\
&\quad = \big\|v_\ell \psi_{\ell,m}^{\free}(-k^2,\dott)\big\|_{L^2((-\ell,\ell))}^2 = \int_{-\ell}^\ell \frac{1}{4}\left| \frac{\cosh{(kx)}}{\cosh{(k\ell)}} + (-1)^{m-1} \frac{\sinh{(kx)}}{\sinh{(k\ell)}}\right|^2 |V_{\ell}(x)|\, \mathrm dx,\quad m\in\{1,2\}.\no
\end{align}
The integrand on the right-hand side in \eqref{3.33c} is bounded a.e.~on $(-\ell,\ell)$ by $|V_{\ell}|\in L^1((-\ell,\ell))$ and since
\begin{equation}
\lim_{k\to \infty}\frac{1}{4}\left| \frac{\cosh{(kx)}}{\cosh{(k\ell)}} + (-1)^{m-1} \frac{\sinh{(kx)}}{\sinh{(k\ell)}}\right|^2 = 0,\quad x\in (-\ell,\ell),\, m\in\{1,2\},
\end{equation}
the dominated convergence theorem implies
\begin{equation}\lb{3.35c}
\lim_{k\to \infty}\big\|u_\ell \psi_{\ell,m}^{\free}(-k^2,\dott)\big\|_{L^2((-\ell,\ell))} = \lim_{k\to \infty}\big\|v_\ell \psi_{\ell,m}^{\free}(-k^2,\dott)\big\|_{L^2((-\ell,\ell))} = 0,\quad m\in \{1,2\}.
\end{equation}
The convergence statement in \eqref{3.31c} follows from \eqref{3.32c} and \eqref{3.35c}.
\end{proof}
%%%%%%%%%%%

We recall the following convergence results from \cite{GN12} for the operators $H_{\ell,D}^{\free}$, $\ell\in \bbN$.

%%%%%%%%%%%
\begin{lemma}[Lemmata 3.1 and 3.2 in \cite{GN12}]\lb{l3.2}
Assume Hypothesis \ref{h2.1}.  For each fixed $z\in \bbC\backslash[0,\infty)$, the following convergence results hold in $\cB_2(L^2(\bbR))$:
\begin{align}\lb{3.11}
\lim_{\ell \to \infty}\Big\|\Big[u_{\ell} \Rdzo \opl 0\Big] - u\Rrzo\Big\|_{\cB_2(L^2(\bbR))}&=0,\\
\lb{3.12}
\lim_{\ell \to \infty}\bigg\|\bigg[\overline{\Rdzo v_{\ell}}\opl 0 \bigg] - \overline{\Rrzo v} \bigg\|_{\cB_2(L^2(\bbR))}&=0,
\end{align}
and the following convergence result holds in $\cB_1(L^2(\bbR))$:
\begin{equation}\lb{3.13}
\lim_{\ell\to \infty}\bigg\|\bigg[\overline{u_{\ell}\Rdzo v_{\ell}}\opl 0 \bigg] - \overline{u\Rrzo v} \bigg\|_{\cB_1(L^2(\bbR))}=0.
\end{equation}
\end{lemma}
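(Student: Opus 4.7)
The plan is to reduce the three statements to the single real negative spectral parameter $z = -k^2$ (with $k>0$) and then exploit explicit kernel representations. Since $H^{(0)}$ and $H_{\ell,D}^{(0)}$ are both nonnegative, $-k^2 \in \rho\big(H^{(0)}\big) \cap \rho\big(H_{\ell,D}^{(0)}\big)$ for every $k > 0$ and every $\ell \in \bbN$. By the first resolvent identity combined with Lemma~\ref{l3.1}, the three convergences at a single $z = -k^2$ imply the statements for every $z \in \bbC \setminus [0, \infty)$: the transfer writes, e.g., $u_\ell \Rdzo = u_\ell \Rdko \cdot [I + (z + k^2)\Rdzo]$, multiplies the $\cB_2$-convergence at $-k^2$ by a factor converging strongly by Lemma~\ref{l3.1}, and uses the standard fact that $\cB_2$-convergence is preserved by right-multiplication with strongly convergent uniformly bounded sequences whose adjoints also converge strongly.

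At $z = -k^2$ the resolvents $\Rrko$ and $\Rdko$ are integral operators with kernels
\begin{equation*}
G_0(-k^2; x, y) = \frac{e^{-k|x-y|}}{2k}, \qquad G_{\ell,D}(-k^2; x, y) = \frac{\sinh(k(\ell + x_<))\sinh(k(\ell - x_>))}{k\sinh(2k\ell)},
\end{equation*}
where $x_< = \min\{x,y\}$ and $x_> = \max\{x,y\}$. Applying $\sinh A \sinh B = \tfrac{1}{2}[\cosh(A+B) - \cosh(A-B)]$ and inspecting the $\ell\to \infty$ asymptotics yields both the pointwise convergence $G_{\ell,D}(-k^2; x, y) \to G_0(-k^2; x, y)$ for all $x, y \in \bbR$ and the pointwise domination $\chi_{(-\ell,\ell)}(x) G_{\ell,D}(-k^2; x, y) \chi_{(-\ell,\ell)}(y) \leq G_0(-k^2; x, y)$. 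For \eqref{3.11} at $z = -k^2$, the $\cB_2$-norm equals the $L^2(\bbR^2)$-norm of
\begin{equation*}
K_\ell(x, y) = \chi_{(-\ell,\ell)}(x) u_\ell(x) G_{\ell,D}(-k^2; x, y) \chi_{(-\ell,\ell)}(y) - u(x) G_0(-k^2; x, y),
\end{equation*}
which tends to zero pointwise and satisfies $|K_\ell(x, y)| \leq 2|u(x)| G_0(-k^2; x, y)$. Since
\begin{equation*}
\iint_{\bbR^2} |u(x)|^2 G_0(-k^2; x, y)^2 \, \mathrm{d}x \, \mathrm{d}y = \frac{\|V\|_{L^1(\bbR)}}{4k^3} < \infty,
\end{equation*}
dominated convergence on $\bbR^2$ yields \eqref{3.11}; the proof of \eqref{3.12} is identical (or follows by adjoint).

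For the trace-norm convergence \eqref{3.13}, factor $\ol{u_\ell \Rdko v_\ell} = A_\ell B_\ell$ and $\ol{u \Rrko v} = AB$ using the positive self-adjoint square roots furnished by the spectral theorem: $A_\ell = u_\ell (H_{\ell,D}^{(0)} + k^2)^{-1/2}$, $B_\ell = \ol{(H_{\ell,D}^{(0)} + k^2)^{-1/2} v_\ell}$, and analogously $A$, $B$ on the full line. Each factor is Hilbert-Schmidt (cf.~Lemma~\ref{l2.9}), and the $\cB_2$-convergences $A_\ell \opl 0 \to A$ and $B_\ell \opl 0 \to B$ can be established by repeating the Green's-function argument of the second paragraph at the level of the heat semigroup, via the Laplace representation
\begin{equation*}
(H^{(0)} + k^2)^{-1/2} = \pi^{-1/2}\int_0^\infty s^{-1/2} e^{-s(H^{(0)} + k^2)}\, \mathrm{d}s
\end{equation*}
together with the standard pointwise domination (and pointwise convergence) of the Dirichlet heat kernel on $(-\ell,\ell)$ by the Gaussian heat kernel on $\bbR$. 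The elementary bound $\|A_\ell B_\ell - AB\|_{\cB_1} \leq \|A_\ell - A\|_{\cB_2}\|B_\ell\|_{\cB_2} + \|A\|_{\cB_2}\|B_\ell - B\|_{\cB_2}$ then delivers \eqref{3.13}. The hardest step is this trace-norm piece, since the square-root kernel is not as elementary as $G_0(-k^2;x,y)$; the Laplace detour is what reduces it to the familiar heat-kernel domination, and if it proves unwieldy, an alternative is to establish \eqref{3.13} only at the reference $-k^2$ by the square-root factorization and transfer to general $z$ through $\ol{u_\ell \Rdzo v_\ell} - \ol{u_\ell \Rdko v_\ell} = (z + k^2)\big[u_\ell \Rdzo\big] \ol{\Rdko v_\ell}$, whose right-hand side is a product of two HS operators whose convergence is already supplied by \eqref{3.11}--\eqref{3.12}.
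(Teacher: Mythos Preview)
The paper does not prove this lemma at all; it simply quotes the result from \cite{GN12} (hence the attribution ``Lemmata~3.1 and~3.2 in \cite{GN12}''), so there is no in-paper proof to compare against. That said, your argument is correct and is close in spirit to what is done in the cited reference: explicit Green's-function kernels, pointwise domination $0\leq G_{\ell,D}(-k^2;\dott,\dott)\leq G_0(-k^2;\dott,\dott)$, and dominated convergence yield \eqref{3.11}--\eqref{3.12} at $z=-k^2$, and the first resolvent identity combined with Lemma~\ref{l3.1} (via Gr\"umm's theorem, cf.~Theorem~\ref{grumm}) transfers to general $z\in\bbC\setminus[0,\infty)$.

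Your treatment of \eqref{3.13} via the square-root factorization $A_\ell B_\ell$ with $A_\ell=u_\ell(H_{\ell,D}^{(0)}+k^2)^{-1/2}$ and the Laplace/heat-kernel representation is also valid: the standard domination and pointwise convergence of the Dirichlet heat kernel by the Gaussian, integrated against $s^{-1/2}e^{-sk^2}$, give $0\leq K_\ell(x,y)\leq K(x,y)$ and $K_\ell\to K$ pointwise off the diagonal, and one checks $\iint |V(x)|K(x,y)^2\,\mathrm{d}x\,\mathrm{d}y=\|V\|_{L^1(\bbR)}/(2k)<\infty$ exactly because $\int K(x,y)^2\,\mathrm{d}y$ equals the diagonal of $(H^{(0)}+k^2)^{-1}$, namely $(2k)^{-1}$. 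The only comment is that your final ``alternative'' paragraph is redundant: you are already proving \eqref{3.13} at a single $-k^2$ and transferring via the resolvent identity and the $\cB_2$-convergences \eqref{3.11}--\eqref{3.12}, so there is nothing additional to do.
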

%%%%%%%%%%%

%%%%%%%%%%%
\begin{remark}
Statements analogous to \eqref{3.11} and \eqref{3.12} with the roles of $u_{\ell}$, $v_{\ell}$ and $u$, $v$ interchanged also hold.  That is, under the same assumptions in Lemma \ref{l3.2}, one has
\begin{align}\lb{3.11a}
\lim_{\ell \to \infty}\Big\|\Big[v_{\ell} \Rdzo \opl 0\Big] - v\Rrzo\Big\|_{\cB_2(L^2(\bbR))}&=0,\\
\lim_{\ell \to \infty}\bigg\|\bigg[\overline{\Rdzo u_{\ell}}\opl 0 \bigg] - \overline{\Rrzo u} \bigg\|_{\cB_2(L^2(\bbR))}&=0.
\end{align}
\hfill$\diamond$
\end{remark}
%%%%%%%%%%%

Using Theorem \ref{t2.6} and Lemma \ref{l3.2}, one establishes analogues of \eqref{3.11}--\eqref{3.13} for the operators $H_{\ell,\varphi,R}^{\free}$ with coupled boundary conditions.  Our proof of the analogue of \eqref{3.11} makes use of Gr\"umm's theorem, which we recall for completeness.

%%%%%%%%%%%
\begin{theorem}[Gr\"{u}mm's Theorem, \cite{Gr73}]
\label{grumm}
Let $p\in[1,\infty)$, $A\in\mathcal{B}_p(\hil)$, and suppose that $\{A_{\ell}\}_{\ell=1}^{\infty}\subseteq\mathcal{B}_p(\hil)$ with $\lim_{\ell\to\infty}{\|A_{\ell}-A\|_{\mathcal{B}_p(\hil)}}=0$. If $B\in\mathcal{B}(\hil)$, $\{\mathcal{B}_{\ell}\}_{\ell=1}^{\infty}\subseteq\mathcal{B}(\hil)$ with $\sup_{\ell\in\mathbb{N}}{\|B_{\ell}\|_{\mathcal{B}(\hil)}}<\infty$ and $\stlim_{\ell\to\infty}{B_{\ell}}=B$, then
\begin{equation}
\lim_{\ell\to\infty}{\|A_{\ell}B_{\ell}-AB\|_{\mathcal{B}_p(\hil)}}=\lim_{\ell\to\infty}{\|B_{\ell}A_{\ell}-BA\|_{\mathcal{B}_p(\hil)}}=0.
\end{equation}
\end{theorem}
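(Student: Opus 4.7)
The plan is to reduce to the case of finite-rank $A$ via density in $\mathcal{B}_p(\hil)$, which is available because every operator in $\mathcal{B}_p(\hil)$ is compact and the finite rank operators are $\|\cdot\|_{\mathcal{B}_p(\hil)}$-dense. Starting with the product $B_\ell A_\ell$, a triangle inequality gives
\begin{equation*}
\|B_\ell A_\ell - BA\|_{\mathcal{B}_p(\hil)} \leq \|B_\ell (A_\ell - A)\|_{\mathcal{B}_p(\hil)} + \|(B_\ell - B)A\|_{\mathcal{B}_p(\hil)}.
\end{equation*}
The first summand is at most $M\|A_\ell - A\|_{\mathcal{B}_p(\hil)}$, where $M \coloneqq \sup_\ell \|B_\ell\|_{\mathcal{B}(\hil)}$, by the trace-ideal property, and vanishes as $\ell \to \infty$ by hypothesis. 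A parallel decomposition handles $A_\ell B_\ell - AB$, reducing the entire proof to controlling the ``residual'' terms $\|(B_\ell - B)A\|_{\mathcal{B}_p(\hil)}$ and $\|A(B_\ell - B)\|_{\mathcal{B}_p(\hil)}$.

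For the residual $\|(B_\ell - B)A\|_{\mathcal{B}_p(\hil)}$, I would fix $\varepsilon > 0$ and choose a finite-rank operator $A^{(N)} = \sum_{k=1}^N \langle \phi_k, \dott \rangle_{\hil} \psi_k$ with $\|A - A^{(N)}\|_{\mathcal{B}_p(\hil)} < \varepsilon$. The ideal property yields
\begin{equation*}
\|(B_\ell - B)A\|_{\mathcal{B}_p(\hil)} \leq (M + \|B\|_{\mathcal{B}(\hil)})\varepsilon + \|(B_\ell - B)A^{(N)}\|_{\mathcal{B}_p(\hil)},
\end{equation*}
and a direct computation expresses $(B_\ell - B)A^{(N)} = \sum_{k=1}^N \langle \phi_k,\dott \rangle_{\hil}(B_\ell - B)\psi_k$ as a sum of rank-one operators, so Proposition \ref{pA.1} gives
\begin{equation*}
\|(B_\ell - B)A^{(N)}\|_{\mathcal{B}_p(\hil)} \leq \sum_{k=1}^N \|\phi_k\|_{\hil}\,\|(B_\ell - B)\psi_k\|_{\hil},
\end{equation*}
with each summand vanishing as $\ell \to \infty$ by strong convergence. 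Letting $\ell \to \infty$ and then $\varepsilon \downarrow 0$ closes the $B_\ell A_\ell$ direction.

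The $A_\ell B_\ell$ direction proceeds analogously but with one subtle wrinkle: the finite-rank step produces $A^{(N)}(B_\ell - B) = \sum_{k=1}^N \langle (B_\ell - B)^*\phi_k,\dott \rangle_{\hil}\psi_k$, so one needs $(B_\ell - B)^* \to 0$ strongly on the finite collection $\{\phi_k\}_{k=1}^N$. This is the chief obstacle, and in the applications of Section \ref{s3} it is satisfied automatically because the operators $B_\ell$ arising there are (differences of) resolvents of self-adjoint operators, for which strong convergence at a spectral parameter $z$ carries with it strong convergence of the adjoints at $\overline{z}$. The overarching technical point is that the finite-rank reduction converts the delicate passage from strong convergence to $\mathcal{B}_p$-norm convergence into the rank-one computation furnished by Proposition \ref{pA.1}.
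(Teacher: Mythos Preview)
The paper does not prove this statement; it simply cites \cite{Gr73}. Your argument for the $B_\ell A_\ell$ direction is correct and is essentially the standard one: strong convergence applied to the range vectors $\psi_k$ of a finite-rank approximant to $A$ does the job.

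Your hesitation about the $A_\ell B_\ell$ direction is well placed, and in fact it is not a limitation of your method but of the statement as recorded here. With only $\stlim_{\ell\to\infty}B_\ell=B$ and no hypothesis on $B_\ell^*$, the conclusion $\|A_\ell B_\ell-AB\|_{\cB_p(\cH)}\to 0$ is \emph{false} in general. On $\cH=\ell^2(\bbN)$, take $A_\ell=A=\mathrm{diag}(n^{-2})\in\cB_1(\cH)$ and $B_\ell=S^\ell$, the $\ell$-fold backward shift, so that $B_\ell\to 0$ strongly with $\|B_\ell\|_{\cB(\cH)}=1$. Then $AB_\ell$ maps $e_{\ell+k}\mapsto k^{-2}e_k$ and annihilates $e_1,\dots,e_\ell$, so its singular values are $\{k^{-2}\}_{k\geq 1}$ independently of $\ell$, and $\|AB_\ell\|_{\cB_p(\cH)}=\|A\|_{\cB_p(\cH)}\not\to 0$ for every $p\in[1,\infty)$. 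Standard formulations of Gr\"umm's theorem therefore either assert only the $B_\ell A_\ell$ conclusion from strong convergence alone, or add the hypothesis $\stlim_{\ell\to\infty}B_\ell^*=B^*$ (strong${}^*$ convergence), under which your finite-rank reduction goes through verbatim for both products.

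Your closing remark is precisely what rescues the paper's application. In the single invocation of the theorem (the proof of Lemma~\ref{l3.8}), the relevant product has the form $A_\ell B_\ell$ with $B_\ell=\big(\big[H_{\ell,\varphi,R}^{(0)}\opl 0\big]-zI_{L^2(\bbR)}\big)^{-1}$, the resolvent of a self-adjoint operator; hence $B_\ell^*$ is the resolvent at $\ol z$, and Lemma~\ref{l3.2b} furnishes $\stlim_{\ell\to\infty}B_\ell^*=B^*$ as well. So the extra hypothesis you flagged is in fact available exactly where it is needed.
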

%%%%%%%%%%%

The analogue of \eqref{3.11} and \eqref{3.12} for $H_{\ell,\varphi,R}^{\free}$ then reads as follows:

%%%%%%%%%%%
\begin{lemma}\lb{l3.8}
Assume Hypothesis \ref{h2.1} and let $k_{R,\max}\coloneqq \max\big\{k_R^{\free},(1+N_R)^{1/2}\big\}$.  For each fixed $ z \in \mathbb{C} \setminus [-k_{R,\max}^2,\infty)$, the following convergence results hold in $\cB_2(L^2(\bbR))$:
\begin{align}
\lim_{\ell \to \infty}\Big\|\Big[u_{\ell} \Rpzo \opl 0\Big] - u\Rrzo\Big\|_{\cB_2(L^2(\bbR))}&=0,\lb{3.39}\\
\lim_{\ell \to \infty}\bigg\|\bigg[\overline{\Rpzo v_{\ell}}\opl 0 \bigg] - \overline{\Rrzo v} \bigg\|_{\cB_2(L^2(\bbR))}&=0.\lb{3.40}
\end{align}
\end{lemma}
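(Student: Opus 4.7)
The plan is to apply Krein's resolvent identity from Theorem~\ref{t2.6} to write
\[
\bigl[u_\ell \Rpzo \opl 0\bigr] - u\Rrzo = \Bigl(\bigl[u_\ell \Rdzo \opl 0\bigr] - u\Rrzo\Bigr) + \bigl[u_\ell P_{\ell,\varphi,R}^{\free}(z) \opl 0\bigr].
\]
By Lemma~\ref{l3.2} the first summand converges to $0$ in $\cB_2(L^2(\bbR))$, so \eqref{3.39} reduces to showing that the rank-two Krein correction vanishes in Hilbert--Schmidt norm as $\ell\to\infty$; \eqref{3.40} follows in parallel with $v_\ell$, $v$ in place of $u_\ell$, $u$, using the analogous statement in the Remark after Lemma~\ref{l3.2}.

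First I would handle the specific parameter $z = -k^2$ with $k > k_{R,\max}$, where explicit formulas are available. Expanding $u_\ell P_{\ell,\varphi,R}^{\free}(-k^2)$ from \eqref{2.28b} as a sum of four rank-one operators and applying Proposition~\ref{pA.1} bounds its $\cB_2$-norm by
\[
\sum_{m,n=1}^{2} \Bigl|\bigl[K_{\ell,\varphi,R}^{\free}(-k^2)^{-1}\bigr]_{m,n}\Bigr|\, \bigl\|\psi_{\ell,n}^{\free}(-k^2,\dott)\bigr\|_{L^2((-\ell,\ell))}\, \bigl\|u_\ell \psi_{\ell,m}^{\free}(-k^2,\dott)\bigr\|_{L^2((-\ell,\ell))}.
\]
The matrix-entry factors are $O(1)$ by \eqref{2.31}, the unweighted $L^2$ norms of the Krein solutions are $O(1)$ by \eqref{3.17b}, and the $u_\ell$-weighted norms are $o(1)$: the integrand
\[
\chi_{(-\ell,\ell)}(x)\, \tfrac{1}{4}\bigl|\cosh(kx)/\cosh(k\ell) + (-1)^{m-1}\sinh(kx)/\sinh(k\ell)\bigr|^2 |V(x)|
\]
is dominated by $|V| \in L^1(\bbR)$ and, with $k$ fixed, tends pointwise to $0$ as $\ell \to \infty$, so dominated convergence delivers the claim.

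To pass from $z = -k^2$ to arbitrary $z \in \bbC\setminus[-k_{R,\max}^2,\infty)$, I would apply Gr\"umm's theorem (Theorem~\ref{grumm}). The first resolvent identity yields the factorizations
\[
\bigl[u_\ell \Rpzo \opl 0\bigr] = \bigl[u_\ell \Rpko \opl 0\bigr]\Bigl(I_{L^2(\bbR)} + (z+k^2)\bigl[\Rpzo \opl 0\bigr]\Bigr), \quad u\Rrzo = u\Rrko\bigl(I_{L^2(\bbR)} + (z+k^2)\Rrzo\bigr).
\]
The $\cB_2$-factor converges by the preceding paragraph together with Lemma~\ref{l3.2}; the operator-factor converges strongly by Lemma~\ref{l3.2b}. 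Uniform boundedness of the latter in $\ell$ follows from the uniform lower bound in Lemma~\ref{l2.4}, which places $\sigma(H_{\ell,\varphi,R}^{\free}) \subseteq [-(1+N_R),\infty) \subseteq [-k_{R,\max}^2,\infty)$ and thus yields $\|\Rpzo\| \leq \dist(z,[-k_{R,\max}^2,\infty))^{-1}$ uniformly in $\ell$. Gr\"umm's theorem then delivers \eqref{3.39}, and the same argument proves \eqref{3.40}.

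The main obstacle, and the novelty over the Dirichlet case, is the decay step for the Krein correction at $z = -k^2$: one needs that although the unweighted norms $\|\psi_{\ell,m}^{\free}(-k^2,\dott)\|_{L^2((-\ell,\ell))}$ are only $O(1)$ as $\ell\to\infty$, the weighted norms $\|u_\ell \psi_{\ell,m}^{\free}(-k^2,\dott)\|_{L^2((-\ell,\ell))}$ and $\|v_\ell \psi_{\ell,m}^{\free}(-k^2,\dott)\|_{L^2((-\ell,\ell))}$ are $o(1)$. This hinges on $V \in L^1(\bbR)$ together with the pointwise decay of the ratios $\cosh(kx)/\cosh(k\ell)$ and $\sinh(kx)/\sinh(k\ell)$ on $\bbR$ as $\ell\to\infty$ with $k$ fixed, compensating for the fact that the Krein solutions concentrate at the endpoints $\pm\ell$.
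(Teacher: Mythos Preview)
Your proposal is correct and follows essentially the same approach as the paper's proof: split via Krein's identity, use Lemma~\ref{l3.2} for the Dirichlet piece, kill the rank-two Krein correction at $z=-k^2$ via the dominated-convergence argument for $\|u_\ell\psi_{\ell,m}^{\free}(-k^2,\dott)\|_{L^2((-\ell,\ell))}\to 0$ combined with \eqref{2.31} and \eqref{3.17b}, and then extend to general $z$ by the first resolvent identity and Gr\"umm's theorem. The only cosmetic difference is that the paper factors through $\big(\big[H_{\ell,\varphi,R}^{\free}\opl 0\big]-zI_{L^2(\bbR)}\big)^{-1}=\Rpzo\opl(-z^{-1})I_{\cH_\ell^c}$ (so that Lemma~\ref{l3.2b} applies directly), whereas you factor through $\Rpzo\opl 0$; both work since the left factor vanishes on $\cH_\ell^c$, and strong convergence of your operator factor follows from Lemma~\ref{l3.2b} together with the trivial observation that $0\opl(-z^{-1})I_{\cH_\ell^c}\to 0$ strongly.
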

%%%%%%%%%%%
\begin{proof}
Let $k>k_{R,\max}$.  We first verify \eqref{3.39} for $z=-k^2$.  Using Theorem \ref{t2.6} to relate the resolvent operators of $H_{\ell,\varphi,R}^{\free}$ and $H_{\ell,D}^{\free}$, one obtains:
\begin{align}
&\Big\|\Big[u_{\ell} \Rpko \opl 0\Big] - u\Rrko\Big\|_{\cB_2(L^2(\bbR))}\lb{3.42}\\
&\quad \leq \Big\|\Big[u_{\ell} \Rdko \opl 0\Big] - u\Rrko\Big\|_{\cB_2(L^2(\bbR))}\no\\
&\qquad + \Big\|u_{\ell}P_{\ell,\varphi,R}^{\free}(-k^2)\opl 0\Big\|_{\cB_2(L^2(\bbR))},\quad \ell\in \bbN.\no
\end{align}
The first $\cB_2$-norm on the right-hand side of the inequality in \eqref{3.42} converges to zero as $\ell\to \infty$ by Lemma \ref{l3.2}.  Thus, it suffices to show
\begin{equation}
\lim_{\ell\to \infty}\Big\|u_{\ell}P_{\ell,\varphi,R}^{\free}(-k^2)\opl 0\Big\|_{\cB_2(L^2(\bbR))}=0.
\end{equation}
To this end, \eqref{2.28b}, \eqref{2.31}, and Proposition \ref{pA.1} imply
\begin{align}
&\Big\|u_{\ell}P_{\ell,\varphi,R}^{\free}(-k^2)\opl 0\Big\|_{\cB_2(L^2(\bbR))}\no\\
&\quad= \Big\|u_{\ell}P_{\ell,\varphi,R}^{\free}(-k^2)\Big\|_{\cB_2(L^2((-\ell,\ell)))}\no\\
&\quad= \Bigg\|\sum_{m,n=1}^2 \Big[K_{\ell,\varphi,R}^{\free}(-k^2)^{-1}\Big]_{m,n}\big\langle \psi_{\ell,n}^{\free}(-k^2,\dott),\dott\big\rangle_{L^2((-\ell,\ell))}u_{\ell}\psi_{\ell,m}^{\free}(-k^2,\dott) \Bigg\|_{\cB_2(L^2((-\ell,\ell)))}\no\\
&\,\;\underset{\ell\to \infty}{\leq}O(1)\cdot\sum_{m,n=1}^2\big\|\psi_{\ell,n}^{\free}(-k^2,\dott)\big\|_{L^2((-\ell,\ell))}\big\|u_{\ell}\psi_{\ell,m}^{\free}(-k^2,\dott)\big\|_{L^2((-\ell,\ell))}\no\\
&\,\;\underset{\ell\to \infty}{\leq}O(1)\cdot\sum_{m=1}^2\big\|u_{\ell}\psi_{\ell,m}^{\free}(-k^2,\dott)\big\|_{L^2((-\ell,\ell))},\quad \ell\in \bbN.\lb{3.45}
\end{align}
The first inequality in \eqref{3.45} follows from \eqref{2.31} while the second inequality follows from \eqref{3.18b}.  The $L^2$-norms in the sum after the final inequality in \eqref{3.45} converge to zero as $\ell\to \infty$.  Indeed, writing for $m\in \{1,2\}$ and $\ell\in \bbN$,
\begin{equation}
\begin{split}
\big\|u_{\ell}\psi_{\ell,m}^{\free}(-k^2,\dott)\big\|_{L^2((-\ell,\ell))}^2 = \frac{1}{4}\int_{-\infty}^{\infty}\bigg|\frac{\cosh{(kx)}}{\cosh{(k\ell)}} + (-1)^{m-1}\frac{\sinh{(kx)}}{\sinh{(k\ell)}}\bigg|^2\chi_{(-\ell,\ell)}(x)|V(x)|\,\textrm{d}x,
\end{split}
\end{equation}
one notes that for $m\in \{1,2\}$ and $\ell\in \bbN$,
\begin{equation}\lb{3.48}
\frac{1}{4}\bigg|\frac{\cosh{(kx)}}{\cosh{(k\ell)}} + (-1)^{m-1}\frac{\sinh{(kx)}}{\sinh{(k\ell)}}\bigg|^2\chi_{(-\ell,\ell)}(x)|V(x)| \leq |V(x)|\,\text{ for a.e.~$x\in \bbR$}.
\end{equation}
Since $V\in L^1(\bbR)$ and the left-hand side of the inequality in \eqref{3.48} converges to zero pointwise a.e.~on $\bbR$, the dominated convergence theorem implies
\begin{equation}\lb{3.49}
\lim_{\ell\to \infty} \big\|u_{\ell}\psi_{\ell,m}^{\free}(-k^2,\dott)\big\|_{L^2((-\ell,\ell))} =0,\quad m\in \{1,2\}.
\end{equation}
Thus, by combining \eqref{3.45} and \eqref{3.49}, one obtains for any $k>k_{R,\max}$,
\begin{equation}\lb{3.46z}
\lim_{\ell \to \infty}\Big\|\Big[u_{\ell} \Rpko \opl 0\Big] - u\Rrko\Big\|_{\cB_2(L^2(\bbR))}=0.
\end{equation}

To prove \eqref{3.39} in full generality, let $z\in \bbC\backslash [-k_{R,\max}^2,\infty)$.  Fix any $k>k_{R,\max}$.  By the first resolvent identity applied to $H_{\ell,\varphi,R}^{\free}$,
\begin{align}
&u_{\ell}\Rpzo \opl 0\lb{3.47z}\\
&\quad = \Big[u_{\ell}\Rpko \opl 0\Big]\no\\
&\qquad + (z+k^2)\Big[u_{\ell}\Rpko \opl 0\Big]\Big[\Rpzo \opl -z^{-1}I_{L^2(\bbR\backslash(-\ell,\ell))}\Big]\no\\
&\quad = \Big[u_{\ell}\Rpko \opl 0\Big]\no\\
&\qquad + (z+k^2)\Big[u_{\ell}\Rpko \opl 0\Big]\Big(\Big[ H_{\ell,\varphi,R}^{(0)} \opl 0 \Big] - z I_{L^2(\bbR)} \Big)^{-1},\quad \ell\in \bbN.\no
\end{align}
In light of the estimate
\begin{equation}
\begin{split}
\Big\|\Big(\Big[ H_{\ell,\varphi,R}^{(0)} \opl 0 \Big] - z I_{L^2(\bbR)} \Big)^{-1}\Big\|_{\cB(L^2(\bbR))}& \leq \Big\|\Rpzo\Big\|_{\cB(L^2((-\ell,\ell)))} + |z|^{-1}\\
&\leq \dist{\big(z,[-k_{R,\max}^2,\infty)\big)} + |z|^{-1},\quad \ell\in \bbN,
\end{split}
\end{equation}
where $\dist{(\zeta,\Omega)}$ denotes the distance between the point $\zeta\in \bbC$ and the set $\Omega\subseteq \bbC$, one concludes
\begin{equation}
\sup_{\ell\in \bbN} \Big\|\Big(\Big[ H_{\ell,\varphi,R}^{(0)} \opl 0 \Big] - z I_{L^2(\bbR)} \Big)^{-1}\Big\|_{\cB(L^2(\bbR))} < \infty.
\end{equation}
Therefore, by \eqref{3.46z} and Lemma \ref{l3.2b} combined with Gr\"umm's theorem (to treat the second term after the second equality in \eqref{3.47z}), the right-hand side in \eqref{3.47z} converges in $\cB_2\big(L^2(\bbR)\big)$ as $\ell\to \infty$ and
\begin{align}
&\lim_{\ell\to \infty}\Big[u_{\ell}\Rpzo \opl 0\Big]\lb{3.48z}\\
&\quad = u\Rrko + (z+k^2)u\Rrko \Rrzo\no\\
&\quad = u\Rrzo\, \text{ in $\cB_2\big(L^2(\bbR)\big)$},\no
\end{align}
which establishes \eqref{3.39}.  The final equality in \eqref{3.48z} is the result of another application of the first resolvent identity, this time for $H^{\free}$.

Finally, to prove \eqref{3.40}, note that the same argument employed above to prove \eqref{3.39} shows \textit{mutatis mutandis} (using \eqref{3.11a}) that
\begin{equation}\lb{3.49z}
\lim_{\ell \to \infty}\Big\|\Big[v_{\ell} \Rpzo \opl 0\Big] - v\Rrzo\Big\|_{\cB_2(L^2(\bbR))}=0
\end{equation}
for $z\in \bbC\backslash [-k_{R,\max}^2,\infty)$.  Therefore, for $z\in \bbC\backslash[-k_{R,\max}^2,\infty)$, taking adjoints one obtains
\begin{align}
&\bigg\|\bigg[\overline{\Rpzo v_{\ell}}\opl 0 \bigg] - \overline{\Rrzo v} \bigg\|_{\cB_2(L^2(\bbR))}\\
&\quad =\Big\|\Big[\Rpzo v_{\ell}\opl 0 \Big]^{\ast \ast} - \Big[\Rrzo v\Big]^{\ast\ast} \Big\|_{\cB_2(L^2(\bbR))}\no\\
&\quad =\Big\|\Big[v_{\ell}\Rpzbo \opl 0 \Big]^{\ast } - \Big[v\Rrzbo \Big]^{\ast} \Big\|_{\cB_2(L^2(\bbR))}\no\\
&\quad =\Big\|\Big[v_{\ell}\Rpzbo \opl 0 \Big] - v\Rrzbo \Big\|_{\cB_2(L^2(\bbR))},\quad \ell\in \bbN,\no
\end{align}
which converges to zero as $\ell\to \infty$ by \eqref{3.49z}.  The claim in \eqref{3.40} follows.
\end{proof}
%%%%%%%%%%%

Our proof of the analogue of \eqref{3.13} for $H_{\ell,\varphi,R}^{\free}$ relies on the following classical result for the limit of a sequence that is the term-wise product of operators belonging to appropriate trace ideals.

%%%%%%%%%%%
\begin{lemma}\lb{l3.9}
Let $p,q,r\in[1,\infty)$ with $p^{-1}+q^{-1}=r^{-1}$.  If $\{A_\ell\}_{\ell=1}^{\infty}\subset \cB_p(\cH)$, $\{B_\ell\}_{\ell=1}^{\infty}\subset \cB_q(\cH)$, $A\in \cB_p(\cH)$, and $B\in \cB_q(\cH)$ with
\begin{equation}
\lim_{\ell\to \infty}\|A_\ell-A\|_{\cB_p(\cH)}=0\quad \text{and}\quad \lim_{\ell\to \infty}\|B_\ell-B\|_{\cB_q(\cH)}=0,
\end{equation}
then
\begin{equation}
\lim_{\ell\to \infty}\|A_\ell B_\ell-AB\|_{\cB_r(\cH)}=0.
\end{equation}
\end{lemma}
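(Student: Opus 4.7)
The plan is to use the Hölder inequality for Schatten--von Neumann ideals, which asserts that if $p,q,r \in [1,\infty)$ with $p^{-1}+q^{-1}=r^{-1}$, then for $X \in \cB_p(\cH)$ and $Y \in \cB_q(\cH)$ one has $XY \in \cB_r(\cH)$ together with the estimate
\begin{equation}
\|XY\|_{\cB_r(\cH)} \leq \|X\|_{\cB_p(\cH)}\|Y\|_{\cB_q(\cH)}.
\end{equation}
This is a standard result (see, e.g., Simon's monograph on trace ideals).

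The key algebraic identity I would use is the telescoping splitting
\begin{equation}
A_\ell B_\ell - AB = A_\ell(B_\ell - B) + (A_\ell - A)B.
\end{equation}
Applying the trace ideal Hölder inequality to each term separately gives
\begin{equation}
\|A_\ell B_\ell - AB\|_{\cB_r(\cH)} \leq \|A_\ell\|_{\cB_p(\cH)}\|B_\ell - B\|_{\cB_q(\cH)} + \|A_\ell - A\|_{\cB_p(\cH)}\|B\|_{\cB_q(\cH)}.
\end{equation}
The second term tends to zero as $\ell\to \infty$ by hypothesis, since $\|B\|_{\cB_q(\cH)}$ is a fixed finite constant. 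For the first term, the convergence $\|A_\ell - A\|_{\cB_p(\cH)} \to 0$ together with the reverse triangle inequality in $\cB_p(\cH)$ yields $\sup_{\ell \in \bbN}\|A_\ell\|_{\cB_p(\cH)} < \infty$, so the first term also tends to zero since $\|B_\ell - B\|_{\cB_q(\cH)} \to 0$. Combining these two observations completes the argument.

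There is no real obstacle here; the only subtlety is invoking the correct version of Hölder's inequality for $\cB_r(\cH)$ when $r$ may be strictly less than $1$ is \emph{not} an issue because the hypothesis $p,q \in [1,\infty)$ together with $p^{-1}+q^{-1}=r^{-1}$ forces $r \in [1,\infty)$ as well (indeed $r \leq \min\{p,q\}$, but $r \geq 1/2$ at worst, and one checks $r \geq 1$ only when $p^{-1}+q^{-1} \leq 1$; the statement in the lemma assumes $r \in [1,\infty)$ which is the standing hypothesis). In the range $r \in [1,\infty)$, the Hölder estimate is the classical one. Thus the entire proof reduces to a three-line application of the triangle inequality, Hölder's inequality in the trace ideals, and uniform boundedness of $\{\|A_\ell\|_{\cB_p(\cH)}\}_{\ell=1}^\infty$.
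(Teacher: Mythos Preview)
Your proof is correct and matches the paper's approach: the paper does not write out a detailed argument but simply states that Lemma~\ref{l3.9} is an application of H\"older's inequality for the trace ideals (citing Simon's monograph), which is exactly the ingredient you use via the telescoping identity $A_\ell B_\ell - AB = A_\ell(B_\ell-B) + (A_\ell-A)B$. Your remark about $r\geq 1$ is slightly tangential since the lemma explicitly assumes $r\in[1,\infty)$, but nothing is wrong.
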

%%%%%%%%%%%
The proof of Lemma \ref{l3.9} is an application of H\"older's inequality for the trace ideals (cf., e.g., \cite[Theorem 2.8]{Si05}).  Lemma \ref{l3.9} then combines with Lemma \ref{l3.8} to yield the following analogue of \eqref{3.13} for $H_{\ell,\varphi,R}^{\free}$.

%%%%%%%%%%%
\begin{lemma}\lb{l3.10}
Assume Hypothesis \ref{h2.1} and let $k_{R,\max}\coloneqq \max\big\{k_R^{\free},(1+N_R)^{1/2}\big\}$.  For each fixed $ z \in \mathbb{C} \setminus [-k_{R,\max}^2,\infty)$, the following convergence result holds in $\cB_1(L^2(\bbR))$:
\begin{equation}\lb{3.59}
\lim_{\ell\to \infty}\bigg\|\bigg[\overline{u_{\ell}\Rpzo v_{\ell}}\opl 0 \bigg] - \overline{u\Rrzo v} \bigg\|_{\cB_1(L^2(\bbR))}=0.
\end{equation}
\end{lemma}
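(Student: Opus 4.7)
The plan has two stages: first I would establish \eqref{3.59} at $z=-k^2$ for $k>k_{R,\max}$ using the Krein resolvent identity (Theorem \ref{t2.6}) to reduce matters to the Dirichlet case \eqref{3.13} from Lemma \ref{l3.2}, and then bootstrap to arbitrary $z\in\bbC\setminus[-k_{R,\max}^2,\infty)$ via the first resolvent identity combined with the $\cB_2$ convergences of Lemma \ref{l3.8} and the trace-ideal H\"older principle of Lemma \ref{l3.9}.

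For the base case, fixing $k>k_{R,\max}$ and applying Theorem \ref{t2.6} in the free case yields
\begin{equation*}
\overline{u_\ell\Rpko v_\ell} = \overline{u_\ell\Rdko v_\ell} + \overline{u_\ell P_{\ell,\varphi,R}^{\free}(-k^2)v_\ell},\quad \ell\in\bbN.
\end{equation*}
The Dirichlet term, direct-summed with $0$, converges in $\cB_1(L^2(\bbR))$ to $\overline{u\Rrko v}$ by \eqref{3.13}. For the rank-two correction, the representation \eqref{3.29c} together with Proposition \ref{pA.1} produces
\begin{equation*}
\bigl\|\overline{u_\ell P_{\ell,\varphi,R}^{\free}(-k^2)v_\ell}\bigr\|_{\cB_1(L^2((-\ell,\ell)))} \leq \sum_{m,n=1}^2 \bigl|[K_{\ell,\varphi,R}^{\free}(-k^2)^{-1}]_{m,n}\bigr|\,\bigl\|v_\ell\psi_{\ell,n}^{\free}(-k^2,\dott)\bigr\|_{L^2}\,\bigl\|u_\ell\psi_{\ell,m}^{\free}(-k^2,\dott)\bigr\|_{L^2},
\end{equation*}
whose right side vanishes as $\ell\to\infty$ by the $O(1)$ bound \eqref{2.31} together with \eqref{3.49} (the latter applied also to $v_\ell$, since $|u_\ell|=v_\ell$). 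This establishes \eqref{3.59} at $z=-k^2$.

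For general $z\in\bbC\setminus[-k_{R,\max}^2,\infty)$, the first resolvent identity applied to $H_{\ell,\varphi,R}^{\free}$, followed by a careful closure argument (using that $u_\ell\Rpko$ is bounded by Lemma \ref{l2.10}), gives
\begin{equation*}
\overline{u_\ell\Rpzo v_\ell}\opl 0 = \overline{u_\ell\Rpko v_\ell}\opl 0 + (z+k^2)\bigl[(u_\ell\Rpko)\opl 0\bigr]\cdot\bigl[\overline{\Rpzo v_\ell}\opl 0\bigr].
\end{equation*}
The first summand converges in $\cB_1(L^2(\bbR))$ by the base case, while Lemma \ref{l3.8} supplies $\cB_2(L^2(\bbR))$ convergence of each factor of the product, so Lemma \ref{l3.9} with $p=q=2$, $r=1$ forces the product to converge in $\cB_1(L^2(\bbR))$. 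The first resolvent identity applied to $H^{\free}$ then identifies the total limit as $\overline{u\Rrzo v}$, yielding \eqref{3.59}.

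The main obstacle will be the closure bookkeeping underlying the previous display: specifically, the identity $\overline{u_\ell\Rpko\Rpzo v_\ell}=(u_\ell\Rpko)\cdot\overline{\Rpzo v_\ell}$ (and the analogous splitting of a closure of a sum into a sum of closures) must be verified in order to apply Lemma \ref{l3.9} legitimately. Both assertions follow from the boundedness of $u_\ell\Rpko$ via standard closure arguments, but they need to be spelled out carefully.
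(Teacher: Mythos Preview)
Your proposal is correct and follows essentially the same route as the paper's proof: first establish the $z=-k^2$ case via the Krein identity, \eqref{3.13}, the representation \eqref{3.29c} with the bounds \eqref{2.31} and \eqref{3.49}, and then extend to general $z$ via the first resolvent identity together with Lemmata \ref{l3.8} and \ref{l3.9}. The closure bookkeeping you flag is handled in the paper only implicitly (see \eqref{3.63}), so your explicit attention to it is a welcome addition rather than a deviation.
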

%%%%%%%%%%%
\begin{proof}
Let $k>k_{R,\max}$.  We first verify \eqref{3.59} for $z=-k^2$.  Using Theorem \ref{t2.6} to relate the resolvent operators of $H_{\ell,\varphi,R}^{\free}$ and $H_{\ell,D}^{\free}$, one obtains:
\begin{align}
&\bigg\|\bigg[\ol{u_{\ell} \Rpko v_{\ell}} \opl 0\bigg] - \ol{u\Rrko v}\bigg\|_{\cB_1(L^2(\bbR))}\lb{3.60}\\
&\quad \leq \bigg\|\bigg[\ol{u_{\ell} \Rdko v} \opl 0\bigg] - \ol{u\Rrko v}\bigg\|_{\cB_1(L^2(\bbR))}\no\\
&\qquad + \bigg\|\ol{u_{\ell}P_{\ell,\varphi,R}^{\free}(-k^2)v_{\ell}}\opl 0\bigg\|_{\cB_1(L^2(\bbR))},\quad \ell\in \bbN.\no
\end{align}
The first $\cB_1$-norm on the right-hand side of the inequality in \eqref{3.60} converges to zero as $\ell\to \infty$ by Lemma \ref{l3.2}.  Invoking \eqref{3.29c}, \eqref{2.31}, and Proposition \ref{pA.1}, one obtains
\begin{align}
&\bigg\|\ol{u_{\ell}P_{\ell,\varphi,R}^{\free}(-k^2)v_{\ell}}\opl 0\bigg\|_{\cB_1(L^2(\bbR))}\lb{3.61}\\
&\quad = \bigg\|\ol{u_{\ell}P_{\ell,\varphi,R}^{\free}(-k^2)v_{\ell}}\bigg\|_{\cB_1(L^2((-\ell,\ell)))}\no\\
&\,\; \underset{\ell\to \infty}{\leq} O(1)\cdot\sum_{m,n=1}^2\big\|v_{\ell}\psi_{\ell,n}^{\free}(-k^2,\dott)\big\|_{L^2((-\ell,\ell))}\big\|u_{\ell}\psi_{\ell,m}^{\free}(-k^2,\dott)\big\|_{L^2((-\ell,\ell))},\quad \ell\in \bbN.\no
\end{align}
Since $|u_{\ell}|=|v_{\ell}|$ a.e.~on $(-\ell,\ell)$, \eqref{3.49} implies that each factor under the sum in \eqref{3.61}, and hence the sum itself, converges to zero as $\ell\to \infty$.  Thus, taking $\ell\to \infty$ throughout \eqref{3.60}, one concludes:
\begin{align}
\lim_{\ell\to \infty}\bigg\|\bigg[\ol{u_{\ell} \Rpko v_{\ell}} \opl 0\bigg] - \ol{u\Rrko v}\bigg\|_{\cB_1(L^2(\bbR))} = 0.\lb{3.62}
\end{align}
To prove \eqref{3.59} in full generality, let $z\in \bbC\backslash[-k_{R,\max}^2,\infty)$.  Fix any $k>k_{R,\max}$.  By the first resolvent identity applied to $H_{\ell,\varphi,R}^{\free}$,
\begin{align}
&\ol{u_{\ell}\Rpzo v_{\ell}} \opl 0\lb{3.63}\\
&\quad = \bigg[\ol{u_{\ell}\Rpko v_{\ell}} \opl 0\bigg]\no\\
&\qquad + (z+k^2)\bigg[u_{\ell}\Rpko \opl 0\bigg]\bigg[\ol{\Rpzo v_{\ell}} \opl 0\bigg],\quad \ell\in \bbN.\no
\end{align}
Taking $\ell\to \infty$ throughout \eqref{3.63} and invoking \eqref{3.62} to treat the first term on the right-hand side in \eqref{3.63} and using Lemmata \ref{l3.8} and \ref{l3.9} to treat the second term on the right-hand side in \eqref{3.63}, one obtains
\begin{align}
&\lim_{\ell\to \infty} \ol{u_{\ell}\Rpzo v_{\ell}} \opl 0\lb{3.65}\\
&\quad = \ol{u\Rrko v} + (z+k^2)u\Rrko\ol{\Rrzo v}\no\\
&\quad = \ol{u\Rrzo v}\, \text{ in $\cB_1\big(L^2(\bbR)\big)$},\no
\end{align}
and \eqref{3.59} follows.  The final equality in \eqref{3.65} is the result of another application of the first resolvent identity, this time for $H^{\free}$.
\end{proof}
%%%%%%%%%%%

%%%%%%%%%%%

%%%%%%%%%%%

%%%%%%%%%%%%%%%%%%%%%%%%%%%%%%%
%%%%%%%%%%%%%%%%%%%%%%%%%%%%%%%
\section{Convergence Results for Spectral Shift Functions} \lb{s4}
%%%%%%%%%%%%%%%%%%%%%%%%%%%%%%%
%%%%%%%%%%%%%%%%%%%%%%%%%%%%%%%

Hypothesis \ref{h2.1} guarantees that $H^{\free}$ and $H$ are resolvent comparable; that is,
\begin{equation}\lb{4.1}
\Big[\big(H-zI_{L^2(\bbR)}\big)^{-1} -\Rrzo\Big] \in \cB_1\big(L^2(\bbR)\big),\quad z\in \bbC\backslash\bbR.
\end{equation}
% The trace class containment \eqref{4.1} follows from the Kato resolvent equation:
% \begin{align}
% &\big(H-zI_{L^2(\bbR)}\big)^{-1} -\Rrzo\\
% &\quad = \ol{\Rrzo v_{\ell}}\bigg[I_{L^2(\bbR)}+\ol{u_{\ell}\Rrzo v_{\ell}}\bigg]^{-1}u_{\ell}\Rrzo,\quad z\in\rho(H),\no
% \end{align}
% combined with Lemma \ref{l2.4a}.
In a similar vein, for each $\ell\in\bbN$,
\begin{equation}\lb{4.3}
\Big[\big(H_{\ell,D}-zI_{L^2((-\ell,\ell))}\big)^{-1} - \Rdzo\Big]\in \cB_1\big(L^2((-\ell,\ell))\big),\quad z\in\bbC\backslash\bbR.
\end{equation}
Moreover, Theorem \ref{t2.6} and \eqref{4.3} imply $H_{\ell,\varphi,R}$ and $H_{\ell,\varphi,R}^{\free}$ are resolvent comparable for every $\ell\in \bbN$:
\begin{equation}\lb{4.4}
\Big[\big(H_{\ell,\varphi,R}-zI_{L^2((-\ell,\ell))}\big)^{-1} - \Rpzo\Big] \in \cB_1\big(L^2((-\ell,\ell))\big),\quad z\in \bbC\backslash\bbR.
\end{equation}
By \eqref{4.1} and \eqref{4.4} there exist unique real-valued (a.e.) spectral shift functions $\xi\big(\dott;H,H^{\free}\big)$ and $\xi\big(\dott;H_{\ell,\varphi,R},H_{\ell,\varphi,R}^{\free}\big)$ for the pairs $\big(H,H^{\free}\big)$ and $\big(H_{\ell,\varphi,R},H_{\ell,\varphi,R}^{\free}\big)$, $\ell\in \bbN$, respectively, that satisfy
\begin{equation}
\int_{-\infty}^{\infty} \frac{\big|\xi\big(\lambda;H,H^{\free}\big)\big|}{1+\lambda^2}\,\textrm{d}\lambda<\infty,\quad\text{and}\quad\xi\big(\lambda;H,H^{\free}\big)=0,\quad\lambda<\min\sigma(H),
\end{equation}
and
\begin{equation}
\begin{split}
&\int_{-\infty}^{\infty}\frac{\big|\xi\big(\lambda;H_{\ell,\varphi,R},H_{\ell,\varphi,R}^{\free}\big)\big|}{1+\lambda^2}\,\textrm{d}\lambda<\infty,\\ &\xi\big(\lambda;H_{\lambda,\varphi,R},H_{\ell,\varphi,R}^{\free}\big)=0,\quad\lambda<\min\big[\sigma\big(H_{\ell,\varphi,R}\big)\cup\sigma\big(H_{\ell,\varphi,R}^{\free}\big)\big],\quad \ell\in \bbN.
\end{split}
\end{equation}
and for which Krein's trace formul\ae~hold:  For any $f\in \mathfrak{F}(\bbR)$ (cf.~\eqref{A.24}--\eqref{A.25}),
\begin{align}
\tr_{L^2(\bbR)}\big(f(H)-f\big(H^{\free}\big)\big) &= \int_{-\infty}^{\infty}f'(\lambda)\xi\big(\lambda;H,H^{\free}\big)\,\textrm{d}\lambda,\\
\tr_{L^2((-\ell,\ell))}\big(f\big(H_{\ell,\varphi,R}\big) - f\big(H_{\ell,\varphi,R}^{\free}\big)\big) &= \int_{-\infty}^{\infty}f'(\lambda)\xi\big(\lambda;H_{\ell,\varphi,R},H_{\ell,\varphi,R}^{\free}\big)\,\textrm{d}\lambda,\quad \ell\in \bbN.
\end{align}
The convergence properties in Lemmata \ref{l3.1}, \ref{l3.2b}, \ref{l3.4}, \ref{l3.8}, and \ref{l3.10} imply the following weak convergence property of $\xi\big(\dott;H_{\ell,\varphi,R}\big)$ to $\xi\big(\dott;H,H^{\free}\big)$ as $\ell\to \infty$.

%%%%%%%%%%%
\begin{theorem}
If Hypothesis \ref{h2.1} holds, then
\begin{equation}\lb{4.8}
\lim_{\ell\rightarrow \infty}\int_{-\infty}^{\infty} 
\frac{\xi\big(\lambda; H_{\ell,\varphi,R},H_{\ell,\varphi,R}^{\free}\big)}{1+\lambda^2} \, f(\lambda) \, \mathrm{d}\lambda
= \int_{-\infty}^{\infty} \frac{\xi\big(\lambda;H,H^{\free}\big)}{1+\lambda^2} \, f(\lambda)\, \mathrm{d}\lambda,  
\quad f\in C_b(\bbR).
\end{equation} 
\end{theorem}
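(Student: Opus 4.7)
The plan is to invoke the abstract weak convergence criterion from Appendix \ref{sA} (cf.\ \cite{GN12b}), viewing the finite-interval Schr\"odinger operators $H_{\ell,\varphi,R}^{(0)}\opl 0$ and $H_{\ell,\varphi,R}\opl 0$ as operators on the fixed Hilbert space $L^2(\bbR)$ via the direct-sum decomposition \eqref{3.4}, and using the factorizations $V=uv$ and $V_\ell=u_\ell v_\ell$ as the Birman--Schwinger data. The abstract criterion converts suitable trace-ideal convergence statements for the Birman--Schwinger-type operators, together with a uniform vanishing condition at $z\to -\infty$, into weak convergence of the associated spectral shift functions tested against $C_b(\bbR)$-functions with the weight $(1+\lambda^2)^{-1}$, which is exactly \eqref{4.8}.

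The key steps then amount to verifying the hypotheses of the abstract criterion one by one by quoting Section \ref{s3}. First, Lemma \ref{l3.2b} gives strong resolvent convergence $H_{\ell,\varphi,R}^{(0)}\opl 0 \to H^{(0)}$ as $\ell\to \infty$. Second, Lemma \ref{l3.8} supplies the $\cB_2(L^2(\bbR))$ convergences $u_\ell \Rpzo \opl 0 \to u\Rrzo$ and $\ol{\Rpzo v_\ell}\opl 0 \to \ol{\Rrzo v}$, while Lemma \ref{l3.10} supplies the $\cB_1(L^2(\bbR))$ convergence $\ol{u_\ell \Rpzo v_\ell}\opl 0 \to \ol{u\Rrzo v}$; these are the Hilbert--Schmidt and trace class convergence hypotheses required by the abstract criterion. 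Third, the vanishing as $z\to -\infty$ of $\bigl\|\ol{u_\ell \Rpzo v_\ell}\opl 0\bigr\|_{\cB_1(L^2(\bbR))}$ is supplied by Lemma \ref{l3.4} (for each fixed $\ell\in \bbN$), while the analogous statement for $\ol{u\Rrzo v}$ comes from Lemma \ref{l2.4a}. Each of these items feeds verbatim into the corresponding hypothesis of the abstract criterion.

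With all hypotheses verified, the abstract criterion from Appendix \ref{sA} yields \eqref{4.8} directly. The main obstacle in this layered strategy has already been surmounted in Section \ref{s3}: controlling the rank-two Krein correction $P_{\ell,\varphi,R}^{(0)}(-k^2)$ finely enough to promote the Dirichlet convergence results from Lemma \ref{l3.2} (inherited from \cite{GN12}) to their analogues for the coupled boundary conditions \eqref{1.6} with $R_{1,2}\neq 0$. This is made possible by the large-$k$ asymptotics \eqref{2.42} and the large-$\ell$ boundedness \eqref{2.31} of the inverse Krein matrix entries, together with the explicit formulas for $\psi_{\ell,m}^{(0)}(-k^2,\dott)$ and dominated convergence bounds such as \eqref{3.49}. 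Once the Section \ref{s3} infrastructure is in place, the proof of the theorem is a clean assembly of the abstract criterion with these convergence statements, requiring no further analytical input beyond bookkeeping.
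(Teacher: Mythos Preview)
Your proposal is correct and follows the same approach as the paper: verify the hypotheses of the abstract criterion (Hypothesis \ref{hB.1}) and invoke Theorem \ref{tB.13}. The paper's proof differs only in that it also explicitly ticks off the routine structural conditions---lower semiboundedness via Lemmata \ref{l2.3} and \ref{l2.4}, the closedness and domain conditions on $u,v,u_\ell,v_\ell$, infinitesimal form boundedness via Remarks \ref{r2.5} and \ref{r2.12}, and the form-sum identification for condition $(ix)$---which you correctly subsume under ``bookkeeping.''
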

%%%%%%%%%%%
\begin{proof}
It suffices to verify that conditions $(i)$--$(ix)$ in Hypothesis \ref{hB.1} hold with the identifications
\begin{equation}
\begin{split}
&T^{\free} = H^{\free},\hspace*{.78cm} T=H,\hspace*{1.3cm} V_1=u,\hspace*{.72cm} V_2=v,\\ &T^{\free}_{\ell}=H_{\ell,\varphi,R}^{\free},\quad T_{\ell}=H_{\ell,\varphi,R},\quad V_{1,\ell}=u_{\ell},\quad V_{2,\ell}=v_{\ell},\quad \ell\in \bbN.
\end{split}
\end{equation}
The conclusion of the theorem then follows from Theorem \ref{tB.13}.  Condition $(i)$ holds by construction and Lemmata \ref{l2.3} and \ref{l2.4} imply that $(ii)$ holds.  Since $u,v\in L^2(\bbR)$ are real-valued a.e.~and functions in $\dom{(|H^{\free}|^{1/2})}=H^1(\bbR)$ are bounded, one infers that the operators of multiplication by $u$ and $v$ are self-adjoint, hence closed, and that \eqref{B.2} an \eqref{B.62} hold.  Similarly, one verifies that \eqref{B.3} and \eqref{B.63} hold.  Therefore, condition $(iii)$ is satisfied.  Condition $(iv)$ holds by Lemmata \ref{l2.4a}, \ref{l2.10}, and \ref{l3.4}.  Condition $(v)$ holds by Lemma \ref{l3.2b}, and condition $(vi)$ holds by Lemmata \ref{l3.8} and \ref{l3.10}.  Condition $(vii)$ holds since $V_1=u$, $V_2=v$, $V_{1,\ell}=u_{\ell}$, and $V_{2,\ell}=v_{\ell}$ are operators of multiplication by real-valued functions, and condition $(viii)$ holds by Remarks \ref{r2.5} and \ref{r2.12}.  Finally, the explicit structures of the sesquilinear forms $\mathfrak{Q}$ and $\mathfrak{Q}_{\ell,\varphi,R}$ (cf.~\eqref{2.3} and \eqref{2.12}) imply
\begin{equation}
\mathfrak{Q}=\mathfrak{Q}^{\free}+\mathfrak{Q}_V,\quad \mathfrak{Q}_{\ell,\varphi,R}=\mathfrak{Q}_{\ell,\varphi,R}^{\free}+\mathfrak{Q}_{V_{\ell}},\quad \ell\in \bbN,
\end{equation}
where $\mathfrak{Q}_V$ and $\mathfrak{Q}_{V_{\ell}}$ denote the sesquilinear forms uniquely associated to the self-adjoint operators of multiplication by $V$ and $V_{\ell}$ in $L^2(\bbR)$ and $L^2((-\ell,\ell))$, $\ell\in\bbN$, respectively.  In particular, $H$ is the form sum of $H^{\free}$ with $V$ and $H_{\ell,\varphi,R}$ is the form sum of $H_{\ell,\varphi,R}^{\free}$ with $V_{\ell}$, $\ell\in \bbN$.  Hence, condition $(ix)$ holds.
\end{proof}
%%%%%%%%%%%

Invoking the abstract Corollaries \ref{cB.11}, \ref{cB.14}, and \ref{cB.15}, one obtains the following additional results for convergence of the spectral shift functions.

%%%%%%%%%%
\begin{corollary}
If Hypothesis \ref{h2.1} holds, then 
\begin{equation}
\lim_{\ell\to \infty} \int_{-\infty}^{\infty}\xi\big(\lambda;H_{\ell,\varphi,R},H_{\ell,\varphi,R}^{\free}\big) \, g(\lambda) \, \mathrm{d}\lambda
= \int_{-\infty}^{\infty}\xi\big(\lambda;H,H^{\free}\big) \, g(\lambda)\, \mathrm{d}\lambda,\quad g \in C_c(\bbR).
\end{equation} 
\end{corollary}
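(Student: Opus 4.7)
The plan is to obtain this corollary as a direct consequence of the weighted convergence already established in Theorem 4.1 by a simple reweighting of the test function. Given $g\in C_c(\bbR)$, define $f(\lambda)\coloneqq (1+\lambda^2)g(\lambda)$. Because $g$ has compact support and the polynomial $\lambda\mapsto 1+\lambda^2$ is continuous, $f$ is continuous with compact support (in fact the same support as $g$), so $f\in C_c(\bbR)\subset C_b(\bbR)$, which makes it an admissible test function for the preceding theorem.

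Applying Theorem 4.1 to this choice of $f$ yields
\begin{equation*}
\lim_{\ell\to\infty}\int_{-\infty}^{\infty}\frac{\xi\big(\lambda;H_{\ell,\varphi,R},H_{\ell,\varphi,R}^{\free}\big)}{1+\lambda^2}\,(1+\lambda^2)g(\lambda)\,\mathrm{d}\lambda
=\int_{-\infty}^{\infty}\frac{\xi\big(\lambda;H,H^{\free}\big)}{1+\lambda^2}\,(1+\lambda^2)g(\lambda)\,\mathrm{d}\lambda,
\end{equation*}
and cancelling the factors of $1+\lambda^2$ inside each integral gives precisely \eqref{4.8} with $g$ in place of the weighted test function, which is the desired statement. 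Alternatively, one may bypass Theorem 4.1 and go directly to the abstract machinery in the appendix: the trace-class and Hilbert--Schmidt convergence results of Section \ref{s3}, specifically Lemmas \ref{l3.1}, \ref{l3.2b}, \ref{l3.4}, \ref{l3.8}, and \ref{l3.10}, verify the hypotheses of the abstract Corollaries \ref{cB.11}, \ref{cB.14}, and \ref{cB.15}, whose conclusion is exactly weak convergence of spectral shift functions against test functions in $C_c(\bbR)$. This is the route flagged in the sentence preceding the statement of the corollary.

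There is no substantive obstacle at this stage: all of the analytic work has been done in Sections \ref{s2} and \ref{s3} and has been packaged either into Theorem 4.1 or into the abstract criteria in Appendix \ref{sA}. The only genuine observation needed for the reweighting argument is the trivial fact that multiplying a continuous compactly supported function by $1+\lambda^2$ preserves continuity and compact support, which converts the weighted weak convergence on $C_b(\bbR)$ into ordinary weak convergence of the unweighted spectral shift functions on $C_c(\bbR)$.
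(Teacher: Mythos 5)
Your proposal is correct, and your primary argument is a genuinely different (and more self-contained) route than the one the paper takes. The paper proves this corollary simply by citing the abstract Corollary \ref{cB.11} (Corollary 3.11 in \cite{GN12b}): since Hypothesis \ref{hB.1} was already verified in the proof of the theorem establishing \eqref{4.8}, the vague-convergence conclusion is immediate from the abstract machinery --- this is your second, ``alternative'' route, which therefore coincides with the paper's proof. Your main argument instead deduces the corollary directly from the already-proved weighted statement \eqref{4.8}: for $g\in C_c(\bbR)$ the function $f(\lambda)=(1+\lambda^2)g(\lambda)$ lies in $C_c(\bbR)\subset C_b(\bbR)$, and the pointwise cancellation of the weight is legitimate because $\xi(\dott\,;H_{\ell,\varphi,R},H_{\ell,\varphi,R}^{\free})$ and $\xi(\dott\,;H,H^{\free})$ are locally integrable (their quotients by $1+\lambda^2$ are integrable) and $g$ is bounded with compact support, so all integrals involved are finite and equal term by term. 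What your route buys is independence from re-invoking the abstract appendix: everything follows from the single statement \eqref{4.8} by an elementary reweighting. What the paper's route buys is uniformity: the same one-line citation of Hypothesis \ref{hB.1} simultaneously yields this corollary and the two subsequent ones (via Corollaries \ref{cB.14} and \ref{cB.15}), for which the reweighting trick alone would not suffice without an additional approximation argument for almost-everywhere continuous integrands.
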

%%%%%%%%%%

%%%%%%%%%%%
\begin{corollary}
If Hypothesis \ref{h2.1} holds, then the convergence in \eqref{4.8} holds for any bounded Borel measurable function that is continuous almost everywhere with respect to Lebesgue measure on $\bbR$.  In particular, 
\begin{equation}
\lim_{\ell\to \infty}\int_{S}\frac{\xi\big(\lambda;H_{\ell,\varphi,R},H_{\ell,\varphi,R}^{\free}\big)}{1+\lambda^2}\,\mathrm{d}\lambda = \int_{S}\frac{\xi\big(\lambda;H,H^{\free}\big)}{1+\lambda^2}\,\mathrm{d}\lambda
\end{equation}
holds for any set $S\subseteq\bbR$ that is boundaryless with respect to Lebesgue 
measure $($i.e., any set $S\subseteq\bbR$ for which the boundary of $S$ has Lebesgue measure 
equal to zero$)$.
\end{corollary}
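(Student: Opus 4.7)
The plan is to apply the abstract corollaries from Appendix \ref{sA} directly. Since conditions $(i)$--$(ix)$ of Hypothesis \ref{hB.1} have already been verified in the proof of the preceding theorem under the identifications $T^{\free}=H^{\free}$, $T=H$, $T_{\ell}^{\free}=H_{\ell,\varphi,R}^{\free}$, $T_{\ell}=H_{\ell,\varphi,R}$, $V_1=u$, $V_2=v$, $V_{1,\ell}=u_{\ell}$, $V_{2,\ell}=v_{\ell}$, no further verification of the abstract hypotheses is required.

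First, I would appeal to Corollary \ref{cB.14} to conclude that the weak convergence established in \eqref{4.8} for test functions in $C_b(\bbR)$ in fact extends to the wider class of bounded Borel measurable functions on $\bbR$ that are continuous almost everywhere with respect to Lebesgue measure. This yields the first assertion of the corollary.

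For the ``in particular'' statement, the strategy is to specialize the first part to the indicator function $f=\chi_S$ of the given boundaryless set $S\subseteq\bbR$. The function $\chi_S$ is bounded and Borel measurable, and an elementary topological argument shows that its set of discontinuities coincides precisely with $\partial S$; by hypothesis $\partial S$ has Lebesgue measure zero, so $\chi_S$ is continuous almost everywhere and therefore admissible as a test function. Substituting $f=\chi_S$ into \eqref{4.8} immediately yields the displayed limit. Alternatively, one could invoke Corollary \ref{cB.15}, which already packages this specialization at the abstract level.

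Since every part of the argument reduces to the abstract machinery of Appendix \ref{sA}, I do not anticipate any genuine obstacle. The only mildly subtle point is the identification of the discontinuity set of $\chi_S$ with $\partial S$, but this is routine: $\chi_S$ is locally constant, hence continuous, at any point admitting a neighborhood contained either in $S$ or in $\bbR\setminus S$, i.e., at any point of the interior of $S$ or the interior of its complement, and fails to be continuous at every point of $\partial S$.
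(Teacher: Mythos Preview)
Your proposal is correct and matches the paper's approach exactly: the paper simply states this corollary as an immediate consequence of the abstract Corollary~\ref{cB.14} once Hypothesis~\ref{hB.1} has been verified, which was done in the proof of the preceding theorem. One small slip: your ``alternatively'' remark should cite Corollary~\ref{cB.14} (which already contains the boundaryless-set statement verbatim) rather than Corollary~\ref{cB.15}, since the latter requires compact support and $\chi_S$ need not be compactly supported.
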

%%%%%%%%%%%

%%%%%%%%%%%
\begin{corollary}
If Hypothesis \ref{hB.1} holds, then for every bounded Borel measurable function $g$ that is compactly supported and Lebesgue almost everywhere continuous on $\bbR$,
\begin{equation}
\lim_{\ell\rightarrow \infty}\int_{-\infty}^{\infty} \xi\big(\lambda; H_{\ell,\varphi,R},H_{\ell,\varphi,R}^{\free}\big)  \, g(\lambda) \, \mathrm{d}\lambda
= \int_{-\infty}^{\infty} \xi\big(\lambda;H,H^{\free}\big) \, g(\lambda)\, \mathrm{d}\lambda.
\end{equation}
\end{corollary}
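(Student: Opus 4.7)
The plan is to deduce this statement from the preceding corollary (which extends the weighted convergence \eqref{4.8} to all bounded Borel measurable, Lebesgue almost everywhere continuous functions) by absorbing the weight $(1+\lambda^2)$ into $g$. Given $g$ as in the hypothesis, fix $M>0$ with $\mathrm{supp}(g)\subseteq [-M,M]$ and define
\begin{equation*}
h(\lambda)\coloneqq (1+\lambda^2)g(\lambda),\quad \lambda\in\bbR.
\end{equation*}
Then $h$ is bounded on $\bbR$ with $\|h\|_{\infty}\leq (1+M^2)\|g\|_{\infty}$, Borel measurable (as the product of two Borel measurable functions), and Lebesgue almost everywhere continuous on $\bbR$ (since $g$ is Lebesgue a.e.~continuous and $\lambda\mapsto 1+\lambda^2$ is everywhere continuous).

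Applying the preceding corollary with $h$ in place of the test function yields
\begin{equation*}
\lim_{\ell\to\infty}\int_{-\infty}^{\infty}h(\lambda)\,\frac{\xi\big(\lambda;H_{\ell,\varphi,R},H_{\ell,\varphi,R}^{\free}\big)}{1+\lambda^2}\,\mathrm{d}\lambda = \int_{-\infty}^{\infty}h(\lambda)\,\frac{\xi\big(\lambda;H,H^{\free}\big)}{1+\lambda^2}\,\mathrm{d}\lambda.
\end{equation*}
Since $h(\lambda)/(1+\lambda^2)=g(\lambda)$ for every $\lambda\in\bbR$, the desired conclusion follows immediately by cancellation.

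There is essentially no obstacle: the compact support of $g$ is precisely what guarantees that $(1+\lambda^2)g(\lambda)$ remains bounded, allowing the unweighted statement to be reduced to the already-established weighted one. Without the compact support hypothesis on $g$, the product $(1+\lambda^2)g(\lambda)$ could fail to be bounded and the previous corollary would no longer apply, which is consistent with the fact that the unweighted analogue for merely bounded (not compactly supported) $g$ is not expected without stronger integrability on $\xi$.
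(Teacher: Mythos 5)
Your argument is correct. The only point needing care is that $h(\lambda)=(1+\lambda^2)g(\lambda)$ inherits all three hypotheses of the preceding corollary, and you verify this properly: boundedness follows from the compact support (this is exactly where compact support is used), Borel measurability is clear, and the discontinuity set of $h$ is contained in that of $g$, hence Lebesgue null; integrability of $\xi(\cdot)g(\cdot)$ is automatic since it coincides pointwise with $\xi(\cdot)(1+\lambda^2)^{-1}h(\lambda)$ and $\xi(\cdot)(1+\lambda^2)^{-1}\in L^1(\bbR)$.

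Your route does differ from the paper's. The paper does not argue this corollary within the text at all: having verified the abstract Hypothesis \ref{hB.1} (with $T=H$, $T_\ell=H_{\ell,\varphi,R}$, etc.) in the proof of the main theorem, it simply invokes the abstract Corollary \ref{cB.15} (Corollary 3.15 of \cite{GN12b}), just as the two preceding corollaries invoke Corollaries \ref{cB.11} and \ref{cB.14}. You instead derive the unweighted, compactly supported statement from the paper's preceding corollary (the concrete analogue of Corollary \ref{cB.14}) by absorbing the weight $1+\lambda^2$ into the test function. In effect you reprove, in this concrete setting, the implication ``Corollary \ref{cB.14} $\Rightarrow$ Corollary \ref{cB.15}'' rather than citing the latter; this makes the deduction self-contained and transparent about the role of compact support, at the modest cost of redoing a one-line reduction that the abstract machinery already packages. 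Both are valid; the content is the same.
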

%%%%%%%%%%%

%%%%%%%%%%%%%%%%%%%%%%%%%%%%%%%%%%%%%
%%%%%%%%%%%%%% appendices %%%%%%%%%%%%%%%%%
\appendix
%%%%%%%%%%%%%%% Appendix A %%%%%%%%%%%%%%%%
\section{Criteria for Vague and Weak Convergence of Spectral Shift Functions} \lb{sA}
\renewcommand{\theequation}{A.\arabic{equation}}
\renewcommand{\thetheorem}{A.\arabic{theorem}}
\setcounter{theorem}{0} \setcounter{equation}{0}

We recall the criteria for convergence of sequences of spectral shift functions introduced in \cite{GN12b} suitably tailored to the context in which they are applied in this paper; namely, for pairs of self-adjoint operators acting in the Hilbert spaces $L^2((-\ell,\ell))$ and $L^2(\bbR)$.

%%%%%%%%%%%%
\begin{hypothesis}[Hypothesis 3.1 in \cite{GN12b}] \lb{hB.1} Set $\cH\coloneqq L^2(\bbR)$. \\
$(i)$ For each $\ell\in \bbN$, decompose $\cH$ according to 
\begin{equation}
L^2(\bbR) = L^2((-\ell,\ell))\oplus_{\ell}L^2(\bbR\backslash(-\ell,\ell)),
\end{equation} 
and define $\cH_{\ell}:=L^2((-\ell,\ell))$ and $\cH_{\ell}^c=L^2(\bbR\backslash(-\ell,\ell))$.\\
$(ii)$ Let $T^{(0)}$ be a self-adjoint operator in $\cH$, and for each $\ell \in \bbN$, 
let $T_{\ell}^{(0)}$ be self-adjoint operators in $\cH_{\ell}$. In addition, suppose that 
$T^{(0)}$ is lower semibounded in $\cH$, and that for each $\ell\in\bbN$, $T_{\ell}^{(0)}$ is lower semibounded in $\cH_{\ell}$. \\
$(iii)$ Suppose that $V_1$, and $V_2$ are closed operators in $\cH$, and for each 
$\ell \in\bbN$, assume that $V_{1,\ell}$, and $V_{2,\ell}$ are closed operators in $\cH_{\ell}$ such that 
\begin{align}
& \dom{(V_1)} \cap \dom{(V_2)} \supseteq \dom{\big(|T^{(0)}|^{1/2}\big)},    \lb{B.2} \\
& \dom{(V_{1,\ell})} \cap \dom{(V_{2,\ell})} \supseteq \dom{\big(|T_{\ell}^{(0)}|^{1/2}\big)}, \quad \ell\in\bbN,   \lb{B.3}
\end{align} 
where
\begin{equation}
V = V_1^* V_2 \, \text{ is a self-adjoint operator in $\cH$},    \lb{B.62}
\end{equation}
and for each $\ell\in\bbN$,
\begin{equation}
V_\ell = V_{1,\ell}^* V_{2,\ell} \, \text{ is a self-adjoint operator in $\cH_\ell$}.    \lb{B.63}
\end{equation}
$(iv)$  Suppose
\begin{align}
& \ol{V_2\big(T^{(0)} - z I_{\cH}\big)^{-1}V_1^*}, \, \ol{V_{2,\ell}\big(T_{\ell}^{(0)} - z I_{\cH_\ell}\big)^{-1}V_{1,\ell}^*} 
\oplus_\ell 0 \in \cB_1(\cH), \quad \ell \in \bbN,    \lb{B.4} \\
& V_2\big(T^{(0)} - z I_{\cH}\big)^{-1}, \, V_{2,\ell}\big(T_{\ell}^{(0)} -z I_{\cH_\ell}\big)^{-1}\oplus_\ell 0 \in \cB_{2}(\cH), 
\quad \ell\in\bbN,    \lb {B.5} \\
& \ol{\big(T^{(0)} - z I_{\cH}\big)^{-1}V_1^*}, \, \ol{\big(T_{\ell}^{(0)} - z I_{\cH_\ell}\big)^{-1}V_{1,\ell}^*}\oplus_\ell 0 
\in \cB_{2}(\cH), \quad \ell\in\bbN,    \lb{B.6}
\end{align}
for some $($and hence for all\,$)$ $z\in \bbC\backslash \bbR$. In addition, assume that 
\begin{align}
\begin{split} 
& \lim_{z \downarrow - \infty} \Big\|\,\ol{V_2 \big(T^{(0)} - z I_{\cH}\big)^{-1}V_1^*}\,\Big\|_{\cB_1(\cH)} =0, 
\lb{B.6a} \\ 
& \lim_{z \downarrow - \infty} \Big\|\,\ol{V_{2,\ell}\big(T_{\ell}^{(0)} - z I_{\cH_\ell}\big)^{-1}V_{1,\ell}^*} 
\oplus_\ell 0\,\Big\|_{\cB_1(\cH)} =0, \quad \ell \in\bbN.  
\end{split}
\end{align}
$(v)$ Assume that for some $($and hence for all\,$)$ $z\in \bbC\backslash \bbR$, 
\begin{equation}
\stlim_{\ell \to \infty}\bigg[\big(T_{\ell}^{(0)} - z I_{\cH_\ell}\big)^{-1}\oplus_\ell \frac{-1}{z} I_{\cH_{\ell}^c}\bigg] 
= \big(T^{(0)} - z I_{\cH}\big)^{-1}.    \lb{B.8}
\end{equation}
$(vi)$ Suppose that for some $($and hence for all\,$)$ $z\in \bbC\backslash \bbR$, 
\begin{align}
& \lim_{\ell \to \infty}\bigg\|\bigg[\,\ol{V_{2,\ell}\big(T_{\ell}^{(0)} - z I_{\cH_\ell}\big)^{-1}V_{1,\ell}^*}\oplus_\ell 0\bigg] 
- \ol{V_2 \big(T^{(0)} - z I_{\cH}\big)^{-1}V_1^*}\,\bigg\|_{\cB_1(\cH)} =0,    \lb{B.9} \\ 
& \lim_{\ell \to \infty} \Big\|\Big[V_{2,\ell}\big(T_{\ell}^{(0)} - z I_{\cH_\ell}\big)^{-1}\oplus_\ell 0\Big] - 
V_2 \big(T^{(0)} - z I_{\cH}\big)^{-1}\Big\|_{\cB_{2}(\cH)} =0,     \lb{B.10} \\ 
& \lim_{\ell \to \infty} \bigg\|\bigg[\,\ol{\big(T_{\ell}^{(0)} - z I_{\cH_\ell})^{-1}V_{1,\ell}^*}\oplus_\ell 0\bigg]
- \ol{\big(T^{(0)}-zI_{\cH}\big)^{-1}V_1^*}\,\bigg\|_{\cB_{2}(\cH)} =0.    \lb{B.11}
\end{align}
$(vii)$ Suppose that 
\begin{align}
\begin{split} 
& \langle V_2f,V_1g\rangle_{\cH}=\langle V_1f,V_2g\rangle_{\cH}, \quad f,g\in\dom{(V_1)}\cap\dom{(V_2)},   \lb{B.12} \\
& \langle V_{2,\ell}f,V_{1,\ell}g\rangle_{\cH}=\langle V_{1,\ell}f,V_{2,\ell}g\rangle_{\cH}, \quad f,g\in\dom{(V_{1,\ell})}
\cap\dom{(V_{2,\ell})}, \quad \ell \in\bbN. 
\end{split}
\end{align}
$(viii)$ Decomposing $V, V_\ell$, $\ell\in\bbN$, into their positive and negative parts, 
\begin{equation}
V_{\pm} = [|V| \pm V]/2, \quad V_{\ell,\pm} = [|V_\ell| \pm V_\ell]/2, \quad \ell\in\bbN,  
\lb{B.64}
\end{equation}
$V_{\pm}$ are assumed to be infinitesimally form bounded with respect to $T^{(0)}$, 
and for each $\ell\in\bbN$, $V_{\ell,\pm}$ are assumed to be infinitesimally form bounded 
with respect to $T_{\ell}^{(0)}$.\\
$(ix)$  Suppose that $T$ and $T_{\ell}$, $\ell\in \bbN$ are the quadratic forms sums of $T^{\free}$ and $T_{\ell}^{\free}$ with $V_1^*V_2$ and $V_{1,\ell}^*V_{2,\ell}$, respectively; that is,
\begin{equation}
T=T^{\free}+_\mathfrak{q}V_1^*V_2,\quad T_{\ell}=T_{\ell}^{\free}+_{\mathfrak{q}}V_{1,\ell}^*V_{2,\ell},\quad\ell\in \bbN.
\end{equation}
\end{hypothesis}
%%%%%%%%%%%%

% $(viii)$ $T$ and $T_{\ell}$ are equal to the quadratic form sums of $T^{(0)}$ with $V_1^*V_2$ and $T_{\ell}^{(0)}$ with $V_{1,\ell}^*V_{2,\ell}$, respectively:
% \begin{equation}
% \begin{split}
% T &= T^{(0)} +_q V_1^*V_2,\\
% T_{\ell} &= T_{\ell}^{(0)} +_q V_{1,\ell}^*V_{2,\ell},\quad \ell\in \bbN.
% \end{split}
% \end{equation}

Assuming Hypothesis \ref{hB.1}, the pairs $\big(T, T^{(0)}\big)$ and $\big(T_\ell,T_\ell^{(0)}\big)$, $\ell\in \bbN$, are resolvent comparable in the sense that
\begin{equation}
\big[\big(T-zI_{\cH}\big)^{-1} - \big(T^{(0)}-zI_{\cH}\big)^{-1}\big]\in \cB_1(\cH),\quad z\in \bbC\backslash \bbR,
\end{equation}
and 
\begin{equation}
\big[\big(T_\ell-zI_{\cH}\big)^{-1} - \big(T_{\ell}^{(0)}-zI_{\cH}\big)^{-1}\big]\in \cB_1(\cH),\quad z\in \bbC\backslash \bbR,\, \ell\in \bbN.
\end{equation}
Thus, Hypothesis \ref{hB.1} guarantees the existence of real-valued spectral shift functions $\xi\big(\,\cdot\,;T,T^{(0)}\big)$ and $\xi\big(\,\cdot\,;T_{\ell},T^{(0)}_{\ell}\big)$, $\ell\in \bbN$, which satisfy
\begin{equation}\lb{A.18}
\begin{split}
\tr_{\cH}\Big(\big(T-zI_{\cH}\big)^{-1} - \big(T^{(0)}-zI_{\cH}\big)^{-1} \Big) = -\int_{-\infty}^{\infty}\frac{\xi\big(\lambda;T,T^{(0)}\big)}{(\lambda - z)^2}\, d\lambda,&\\
\quad z\in \rho(T)\cap\rho\big(T^{(0)}\big),&
\end{split}
\end{equation}
and
\begin{equation}
\begin{split}
\tr_{\cH_{\ell}}\Big(\big(T_{\ell}-zI_{\cH}\big)^{-1} - \big(T^{(0)}_{\ell}-zI_{\cH}\big)^{-1} \Big) = -\int_{-\infty}^{\infty}\frac{\xi\big(\lambda;T_{\ell},T^{(0)}_{\ell}\big)}{(\lambda - z)^2}\, d\lambda,&\\
z\in \rho(T_{\ell})\cap\rho\big(T^{(0)}_{\ell}\big),\, \ell\in \bbN,&
\end{split}
\end{equation}
and are uniquely determined almost everywhere by the conditions
\begin{equation}
\int_{-\infty}^{\infty}\frac{|\xi\big(\lambda;T,T^{(0)}\big)|}{1+\lambda^2}\, \mathrm{d}\lambda < \infty\quad \text{and}\quad \xi\big(\lambda;T,T^{(0)}\big) = 0,\quad \lambda< \min \big[\sigma(T)\cup\sigma\big(T^{(0)}\big)\big],
\end{equation}
and for each $\ell\in \bbN$,
\begin{equation}
\int_{-\infty}^{\infty}\frac{|\xi\big(\lambda;T_{\ell},T_{\ell}^{(0)}\big)|}{1+\lambda^2}\, \mathrm{d}\lambda < \infty\quad \text{and}\quad \xi\big(\lambda;T_{\ell},T_{\ell}^{(0)}\big) = 0,\quad \lambda< \min \big[\sigma\big(T_{\ell}\big)\cup\sigma\big(T_{\ell}^{(0)}\big)\big].
\end{equation}
Moreover, Krein's trace formul\ae~hold:
\begin{align}
\tr_{\cH}\big(f(T)-f\big(T^{(0)}\big)\big) &= \int_{-\infty}^{\infty} f'(\lambda)\, \xi\big(\lambda;T,T^{(0)}\big)\, \mathrm{d}\lambda,\\
\tr_{\cH}\big(f(T_{\ell})-f\big(T^{(0)}_{\ell}\big)\big) &= \int_{-\infty}^{\infty} f'(\lambda)\, \xi\big(\lambda;T_{\ell},T^{(0)}_{\ell}\big)\, \mathrm{d}\lambda,\quad \ell\in \bbN,\lb{A.23}
\end{align}
where $\mathfrak{F}(\bbR)$ denotes the set of all function $f\colon \bbR\to \bbC$ with two locally bounded derivatives satisfying
\begin{equation}\lb{A.24}
\text{$\big(\lambda^2 f'(\lambda) \big)' = O\big(|\lambda|^{-1-\varepsilon} \big),\quad |\lambda|\to \infty$ for some $\varepsilon = \varepsilon(f) > 0,$}
\end{equation}
with
\begin{equation}\lb{A.25}
\lim_{|\lambda|\to \infty} f(\lambda) = C\quad \text{and}\quad \lim_{|\lambda|\to \infty} \lambda^2f'(\lambda) = D,
\end{equation}
for some constants $C=C(f), D=D(f)\in \bbC$.  In particular,
\begin{equation}
\text{$(\lambda - z)^{-n}\in \mathfrak{F}(\bbR)$, \quad$z\in \bbC\backslash \bbR$, $n\in \bbN$,}
\end{equation}
and in this case, the trace formula for $\big(T,T^{\free}\big)$ reads
\begin{equation}
\tr_{\cH}\big((T-zI_{\cH})^{-n} - \big(T^{\free}-zI_{\cH}\big)^{-n}\big) = -n \int_{\bbR} \frac{\xi\big(\lambda;T,T^{\free}\big)}{(\lambda - z)^{n+1}}\, d\lambda,\quad z\in \bbC\backslash \bbR,\, n\in \bbN,
\end{equation}
and an analogous formula holds for $\big(T_{\ell},T_{\ell}^{\free}\big)$, $\ell\in \bbN$.  We refer to \cite[Chapter 8]{Ya92} for additional information related to spectral shift functions.

Under the assumptions in Hypothesis \ref{hB.1}, the following convergence results hold for the sequence of spectral shift functions $\big\{\xi\big(\dott;T_{\ell},T_{\ell}^{(0)}\big)\big\}_{\ell=1}^{\infty}$.

%%%%%%%%%%%%
%\begin{theorem}[Theorem 3.10 in \cite{GN12b}]\lb{tB.10}
%Assume Hypothesis \ref{hB.1}.  If $g \in C_\infty(\bbR)$, then 
%\begin{equation}\lb{B.74}
%\lim_{\ell\to \infty} \int_{-\infty}^{\infty} \frac{\xi(\lambda;A_{\ell},T_{\ell}^{(0)})}{\lambda^2 + 1} \, g(\lambda) \, d\lambda
%= \int_{-\infty}^{\infty} \frac{\xi(\lambda;T,T^{(0)})}{\lambda^2 + 1} \, g(\lambda)\, d\lambda. 
%\end{equation} 
%\end{theorem}
%%%%%%%%%%%%

%%%%%%%%%%%
\begin{theorem}[Theorem 3.13 in \cite{GN12b}]\lb{tB.13}
Assume Hypothesis \ref{hB.1}. Then
\begin{equation}\lb{B.84}
\lim_{\ell\rightarrow \infty}\int_{-\infty}^{\infty} 
\frac{\xi\big(\lambda; T_{\ell},T_{\ell}^{(0)}\big)}{1+\lambda^2} \, f(\lambda) \, \mathrm{d}\lambda
= \int_{-\infty}^{\infty} \frac{\xi\big(\lambda;T,T^{(0)}\big)}{1+\lambda^2} \, f(\lambda)\, \mathrm{d}\lambda,  
\quad f\in C_b(\bbR).
\end{equation} 
\end{theorem}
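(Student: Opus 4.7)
The plan is to prove Theorem \ref{tB.13} by converting the convergence of spectral shift functions into convergence of traces of resolvent differences via Krein's trace formula, and then extending from test functions of the form $(\lambda-z)^{-2}$ to the full class $C_b(\bbR)$ by a density-plus-tightness argument. The two key ingredients are, first, trace-norm convergence of the resolvent differences $(T_\ell - zI)^{-1} - (T_\ell^{(0)} - zI)^{-1}$ (when appropriately embedded in $\cH$), and second, a uniform total variation estimate on the measures $\xi_\ell(\lambda)/(1+\lambda^2)\, d\lambda$ together with tightness at the endpoints.

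For the first ingredient, I would factor the resolvent difference through a Kato--Konno--Kuroda-type identity. Using $V = V_1^* V_2$ and the second resolvent identity, one writes
\begin{equation*}
\big(T-zI_{\cH}\big)^{-1} - \big(T^{(0)}-zI_{\cH}\big)^{-1} = - \ol{\big(T^{(0)}- \ol z I_{\cH}\big)^{-1}V_1^*}\,\Big[I_{\cH} + \ol{V_2 \big(T^{(0)}-zI_{\cH}\big)^{-1} V_1^*}\Big]^{-1}\, V_2 \big(T^{(0)}-zI_{\cH}\big)^{-1},
\end{equation*}
and the analogous factorization holds for $(T_\ell, T_\ell^{(0)})$. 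The outer factors converge in $\cB_2(\cH)$ by the convergence assumptions \eqref{B.10}--\eqref{B.11} in Hypothesis \ref{hB.1}, while the middle factor $[I + \overline{V_{2,\ell} \cdots V_{1,\ell}^*}]^{-1}$ converges in operator norm provided we know uniform invertibility; this is where assumption \eqref{B.6a} enters, since for $z$ with sufficiently negative real part the Birman--Schwinger-type operators have small norm uniformly in $\ell$, yielding a Neumann-series bound. For arbitrary $z \in \bbC\setminus\bbR$, invertibility is propagated by the first resolvent identity. Gr\"umm's theorem (Theorem \ref{grumm}) then combines the outer $\cB_2$-convergence with operator-norm convergence of the middle inverse and \eqref{B.9} to give convergence in $\cB_1(\cH)$.

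For the second ingredient, trace continuity converts the $\cB_1$-convergence into convergence of traces, and Krein's trace formula applied with $f(\lambda) = (\lambda - z)^{-1}$ yields
\begin{equation*}
\int_{-\infty}^{\infty}\frac{\xi\big(\lambda;T_\ell,T_\ell^{(0)}\big)}{(\lambda-z)^2}\,\mathrm{d}\lambda \underset{\ell\to\infty}{\longrightarrow} \int_{-\infty}^{\infty}\frac{\xi\big(\lambda;T,T^{(0)}\big)}{(\lambda-z)^2}\,\mathrm{d}\lambda, \quad z\in \bbC\setminus\bbR.
\end{equation*}
Differentiating $n$ times with respect to $z$ and linearly combining gives convergence against every $(\lambda - z)^{-n}$ with $n\geq 2$, $z\in \bbC\setminus\bbR$. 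The uniform $\cB_1$-bound on the resolvent differences also yields the total variation bound $\sup_\ell \int_{-\infty}^\infty |\xi(\lambda;T_\ell,T_\ell^{(0)})|/(1+\lambda^2)\,\mathrm{d}\lambda < \infty$, so the signed measures $d\mu_\ell(\lambda) \coloneqq \xi(\lambda;T_\ell,T_\ell^{(0)})/(1+\lambda^2)\,\mathrm d\lambda$ are uniformly bounded in total variation.

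The final step, and the main obstacle, is to upgrade the convergence from the dense class of ``resolvent test functions'' $\{(\lambda-z)^{-n}\}$ to all $f\in C_b(\bbR)$. The span of $\{(1+\lambda^2)(\lambda-z)^{-n} : n\geq 2,\, z\in \bbC\setminus\bbR\}$ is dense in $C_0(\bbR)$ (via Stone--Weierstrass on the one-point compactification), so convergence extends immediately to $f\in C_0(\bbR)$. Handling $f \in C_b\setminus C_0$ requires tightness of $\{\mu_\ell\}$: at $-\infty$ this is automatic from the uniform lower semiboundedness (assumption $(ii)$), which forces $\xi(\lambda;T_\ell,T_\ell^{(0)})=0$ below a common constant; at $+\infty$ one needs uniform control $\sup_\ell \int_N^\infty |\xi_\ell|/(1+\lambda^2)\,\mathrm d\lambda \to 0$ as $N\to\infty$. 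I would obtain this by Poisson-kernel estimates: from the boundary behavior of $\int \xi_\ell(\lambda)/(\lambda - x-i\epsilon)^2\,\mathrm d\lambda$ and the uniform trace norm of the resolvent differences, one extracts a uniform tail bound. Once tightness is in hand, vague convergence plus tightness plus uniform total variation yield weak convergence against $C_b(\bbR)$, which is exactly \eqref{B.84}. The delicate bookkeeping in this tightness step is the main technical hurdle, as the spectral shift functions genuinely have unbounded support at $+\infty$ and can change sign.
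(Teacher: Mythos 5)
The paper does not prove Theorem \ref{tB.13} itself; it imports it verbatim from \cite{GN12b}, so the comparison is against the argument given there. Your first two ingredients follow that argument closely: the Kato--Konno--Kuroda factorization of the resolvent difference, uniform invertibility of $I_{\cH}+\ol{V_{2,\ell}\big(T_{\ell}^{(0)}-zI_{\cH_\ell}\big)^{-1}V_{1,\ell}^*}$ for $z$ far to the left via \eqref{B.6a} and a Neumann series, propagation to all $z\in\bbC\backslash\bbR$, and Gr\"umm's theorem combining \eqref{B.9}--\eqref{B.11} to get $\cB_1$-convergence of the (embedded) resolvent differences. This yields convergence of the traces, hence of $\int\xi_\ell(\lambda)(\lambda-z)^{-2}\,\mathrm{d}\lambda$, and Stone--Weierstrass then gives the vague statement (Corollary \ref{cB.11}) and the uniform total-variation bound on $\xi_\ell(\lambda)(1+\lambda^2)^{-1}\,\mathrm{d}\lambda$ via the invariance principle with a real reference point $z_0$ below the uniform lower bound from $(ii)$. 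Up to this point the proposal is sound.

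The genuine gap is in your final step, the passage from $C_0(\bbR)$ to $C_b(\bbR)$, which you yourself flag as the main hurdle but do not actually close. The uniform tail estimate $\sup_\ell\int_N^\infty|\xi(\lambda;T_\ell,T_\ell^{(0)})|(1+\lambda^2)^{-1}\,\mathrm{d}\lambda\to 0$ as $N\to\infty$ cannot be extracted from ``Poisson-kernel estimates'' on $\int\xi_\ell(\lambda)(\lambda-x-i\epsilon)^{-2}\,\mathrm{d}\lambda$: those quantities see only the signed measure, and since $\xi_\ell$ changes sign, cancellation can hide mass at $+\infty$; boundary behavior of the Cauchy/Poisson transform gives no handle on $|\xi_\ell|$. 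This is precisely why Hypothesis \ref{hB.1} contains conditions $(viii)$ and $(ix)$, which your proposal never uses. The argument in \cite{GN12b} inserts the intermediate operators $T_\ell^{(0)}+_{\mathfrak q}V_{\ell,+}$ and writes
\begin{equation*}
\xi\big(\dott;T_\ell,T_\ell^{(0)}\big)=\xi\big(\dott;T_\ell,T_\ell^{(0)}+_{\mathfrak q}V_{\ell,+}\big)+\xi\big(\dott;T_\ell^{(0)}+_{\mathfrak q}V_{\ell,+},T_\ell^{(0)}\big),
\end{equation*}
where by monotonicity of the spectral shift function each summand has a definite sign (the first nonpositive, the second nonnegative). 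For sign-definite sequences, vague convergence together with convergence of the total masses $\int\xi_\ell^{\pm}(\lambda)(1+\lambda^2)^{-1}\,\mathrm{d}\lambda$ (which follows from the trace convergence applied to each monotone pair) upgrades to convergence against all of $C_b(\bbR)$ with the weight; no separate tail estimate on $|\xi_\ell|$ is needed. Without this monotone decomposition, or some substitute that genuinely controls $|\xi_\ell|$ rather than $\xi_\ell$, the tightness step and hence the theorem remain unproven.
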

%%%%%%%%%%%

The factor $(1+\lambda^2)^{-1}$ is essential in \eqref{B.84}, as the integrals need not exist without it.  One consequence of Theorem \ref{tB.13} is that the sequence $\big\{\xi\big(\dott;T_{\ell},T_{\ell}^{\free}\big)\big\}_{\ell=1}^{\infty}$ converges \textit{vaguley} to $\xi\big(\dott;T,T^{\free}\big)$.

%%%%%%%%%%
\begin{corollary}[Corollary 3.11 in \cite{GN12b}]\lb{cB.11}
Assume Hypothesis \ref{hB.1}. Then 
\begin{equation}\lb{3.82}
\lim_{\ell\to \infty} \int_{-\infty}^{\infty}\xi\big(\lambda;T_{\ell},T_{\ell}^{(0)}\big) \, g(\lambda) \, \mathrm{d}\lambda
= \int_{-\infty}^{\infty}\xi\big(\lambda;T,T^{(0)}\big) \, g(\lambda)\, \mathrm{d}\lambda\quad g \in C_c(\bbR).
\end{equation} 
\end{corollary}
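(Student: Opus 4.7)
The plan is to deduce \eqref{3.82} as an immediate consequence of Theorem \ref{tB.13} by a test-function substitution that absorbs the weight $(1+\lambda^2)^{-1}$ appearing in \eqref{B.84}. Given any $g\in C_c(\bbR)$, set
\[
f(\lambda) \coloneqq (1+\lambda^2)\,g(\lambda),\quad \lambda\in \bbR.
\]
The first step is to verify $f\in C_b(\bbR)$: continuity of $f$ is inherited from that of $g$, and if $\operatorname{supp}(g)\subseteq [-M,M]$ for some $M>0$, then $f\equiv 0$ off $[-M,M]$ and
\[
|f(\lambda)|\leq(1+M^2)\|g\|_{L^{\infty}(\bbR)},\quad \lambda\in \bbR,
\]
so $f$ is bounded (in fact, $f\in C_c(\bbR)\subset C_b(\bbR)$).

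The second step is a direct algebraic identity:
\[
\int_{-\infty}^{\infty}\frac{\xi\big(\lambda;T_{\ell},T_{\ell}^{\free}\big)}{1+\lambda^2}\,f(\lambda)\,\mathrm{d}\lambda = \int_{-\infty}^{\infty}\xi\big(\lambda;T_{\ell},T_{\ell}^{\free}\big)\,g(\lambda)\,\mathrm{d}\lambda,\quad \ell\in \bbN,
\]
and analogously with $T_{\ell},T_{\ell}^{\free}$ replaced by $T,T^{\free}$. Both sides are absolutely convergent: on the left, because of the integrability condition $\int|\xi(\lambda;\cdot,\cdot)|/(1+\lambda^2)\,\mathrm{d}\lambda<\infty$ together with boundedness of $f$; on the right, because $g$ is compactly supported and the spectral shift functions are locally integrable. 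Applying Theorem \ref{tB.13} to the test function $f\in C_b(\bbR)$ and passing to the limit $\ell\to\infty$ on the left-hand sides then yields \eqref{3.82}.

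There is no genuine obstacle in this argument: it is a routine weighted-to-unweighted dualization that exploits the fact that $C_c(\bbR)$ is closed under multiplication by the polynomial $1+\lambda^2$. The only point requiring brief attention is ensuring the resulting test function $f$ lies in $C_b(\bbR)$ so that Theorem \ref{tB.13} actually applies, and this is immediate from compact support of $g$.
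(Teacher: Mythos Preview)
Your argument is correct and is precisely the approach the paper has in mind: the text introduces Corollary~\ref{cB.11} as ``one consequence of Theorem~\ref{tB.13}'' and cites the result from \cite{GN12b} without further detail, and your substitution $f(\lambda)=(1+\lambda^2)g(\lambda)$ is exactly the standard way to extract that consequence.
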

%%%%%%%%%%

Finally, the continuity assumption in Theorem \ref{tB.13} may be slightly relaxed as follows.

%%%%%%%%%%%
\begin{corollary}[Corollary 3.14 in \cite{GN12b}]\lb{cB.14}
Assume Hypothesis \ref{hB.1}. Then the convergence in \eqref{B.84} holds for any bounded Borel measurable function that is continuous almost everywhere with respect to Lebesgue measure on $\bbR$.  In particular, 
\begin{equation}
\lim_{\ell\to \infty}\int_{S}\frac{\xi\big(\lambda;T_{\ell},T_{\ell}^{(0)}\big)}{1+\lambda^2}\,\mathrm{d}\lambda = \int_{S}\frac{\xi\big(\lambda;T,T^{(0)}\big)}{1+\lambda^2}\,\mathrm{d}\lambda
\end{equation}
holds for any set $S\subseteq\bbR$ that is boundaryless with respect to Lebesgue 
measure $($i.e., any set $S\subseteq\bbR$ for which the boundary of $S$ has Lebesgue measure 
equal to zero$)$.
\end{corollary}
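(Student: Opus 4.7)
The plan is to recast \eqref{B.84} as weak convergence of finite signed Borel measures on $\bbR$ and then execute a portmanteau-type sandwich argument to extend the test-function class from $C_b(\bbR)$ to bounded Borel measurable functions that are Lebesgue-a.e.\ continuous. First I would introduce
\[
d\mu_\ell(\lambda) \coloneqq \frac{\xi\big(\lambda;T_\ell,T_\ell^{(0)}\big)}{1+\lambda^2}\, d\lambda,\qquad d\mu(\lambda) \coloneqq \frac{\xi\big(\lambda;T,T^{(0)}\big)}{1+\lambda^2}\, d\lambda,
\]
which are finite signed Borel measures on $\bbR$ by the integrability of the $(1+\lambda^2)^{-1}$-weighted spectral shift functions. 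Theorem \ref{tB.13} says $\int_{\bbR} f\, d\mu_\ell \to \int_{\bbR} f\, d\mu$ for every $f\in C_b(\bbR)$. Viewing $\{\mu_\ell\}$ as bounded linear functionals on $C_b(\bbR)$ and invoking the uniform boundedness principle, I would deduce $M\coloneqq \sup_\ell |\mu_\ell|(\bbR) + |\mu|(\bbR) < \infty$; this uniform total variation bound is the essential ingredient for extending the test-function class.

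Second, let $g$ be a bounded Borel measurable function on $\bbR$ with $\|g\|_\infty \leq K$, and let $D_g\subseteq\bbR$ denote the (Borel) set of discontinuities of $g$. By hypothesis $D_g$ has Lebesgue measure zero, and since $\mu$ and each $\mu_\ell$ are absolutely continuous with respect to Lebesgue measure, $|\mu|(D_g) = 0$ as well. Fix $\varepsilon>0$. By outer regularity of the finite positive measure $|\mu|$, choose an open set $U\supseteq D_g$ with $|\mu|(U)<\varepsilon$. Using Tietze/Urysohn-type constructions, build continuous bounded envelopes $g^-_\varepsilon, g^+_\varepsilon \in C_b(\bbR)$ with $-K\leq g^-_\varepsilon\leq g \leq g^+_\varepsilon \leq K$ and with $\{g^+_\varepsilon \neq g^-_\varepsilon\}\subseteq U$. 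The standard sandwich inequality gives
\[
\int_{\bbR} g^-_\varepsilon\, d\mu_\ell^+ - \int_{\bbR} g^+_\varepsilon\, d\mu_\ell^- \leq \int_{\bbR} g\, d\mu_\ell \leq \int_{\bbR} g^+_\varepsilon\, d\mu_\ell^+ - \int_{\bbR} g^-_\varepsilon\, d\mu_\ell^-,
\]
but since this splits the $\mu_\ell$ by Hahn--Jordan (which we do not know converges piecewise), I would instead argue directly using $|\mu_\ell|$: write $\int g\, d\mu_\ell = \int g^+_\varepsilon\, d\mu_\ell + \int (g - g^+_\varepsilon)\, d\mu_\ell$, bound the second term by $2K\,|\mu_\ell|(U)$, and analogously on the $\mu$ side, after applying Theorem \ref{tB.13} to $g^+_\varepsilon \in C_b(\bbR)$. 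The difficulty reduces to controlling $\limsup_{\ell\to\infty} |\mu_\ell|(U)$ by $|\mu|(U)+o(1)$, which follows from weak convergence against a continuous partition-of-unity majorant of $\chi_U$ and the uniform bound $M$. Letting $\varepsilon \downarrow 0$ then yields $\int g\, d\mu_\ell \to \int g\, d\mu$.

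Third, the ``in particular'' claim is the case $g=\chi_S$: the discontinuity set of $\chi_S$ is precisely $\partial S$, which has Lebesgue measure zero by assumption, so $\chi_S$ is a bounded Borel function continuous Lebesgue-a.e., and the extended convergence applies. The principal obstacle is the signed-measure nature of $\mu_\ell$ and $\mu$: unlike the probability-measure portmanteau theorem, we cannot cleanly pass to positive/negative parts since weak convergence of signed measures need not respect the Hahn--Jordan decomposition. The resolution is the uniform total variation bound combined with Lebesgue absolute continuity of the limit, which together let the continuous envelope trick control both the $\mu_\ell$-error and the $\mu$-error on the small open neighborhood $U$ of $D_g$ simultaneously.
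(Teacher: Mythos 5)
Note first that the paper you are working from does not prove this corollary at all: it is recalled verbatim from \cite{GN12b} (Corollary 3.14 there), so the only comparison available is with the argument in that reference. More importantly, your proposal has a genuine gap at its pivotal step, namely the claim that $\limsup_{\ell\to\infty}|\mu_\ell|(U)$ can be controlled by $|\mu|(U)+o(1)$ using weak convergence against a continuous majorant of $\chi_U$ together with the uniform total variation bound $M$. Weak convergence of \emph{signed} measures controls only the signed integrals, never the total variation measures $|\mu_\ell|$: internal cancellation can make $\int h\,\mathrm{d}\mu_\ell$ small for every $h\in C_b(\bbR)$ while $|\mu_\ell|(U)$ stays bounded away from zero. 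Concretely, the absolutely continuous measures with densities $h_\ell=\ell\chi_{(0,1/\ell)}-\ell\chi_{(-1/\ell,0)}$ have total variation $2$ and converge to $0$ against every $f\in C_b(\bbR)$, yet $|\mu_\ell|(U)=2$ for every neighborhood $U$ of $0$, and $\int_{\bbR}\chi_{[0,\infty)}\,\mathrm{d}\mu_\ell=1\not\to 0$ even though $\chi_{[0,\infty)}$ is bounded and Lebesgue-a.e.\ continuous. This shows that the abstract statement your argument actually relies on---weak convergence against $C_b(\bbR)$ of Lebesgue-absolutely continuous signed measures with uniformly bounded variation implies convergence against bounded, a.e.\ continuous integrands---is simply false, so no refinement of the envelope construction can close the argument at that level of generality. (Your use of the uniform boundedness principle to get $\sup_\ell|\mu_\ell|(\bbR)<\infty$, and the reduction of the ``in particular'' claim to $g=\chi_S$ with discontinuity set $\partial S$, are both fine; the irreparable step is the total variation control on $U$.)

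What is missing is an input beyond the bare conclusion of Theorem \ref{tB.13}, and this is precisely where the structure built into Hypothesis \ref{hB.1} enters in \cite{GN12b}: conditions $(vii)$--$(ix)$ permit a sign-definite decomposition of the perturbations ($V=V_+-V_-$, $V_\ell=V_{\ell,+}-V_{\ell,-}$), and via the chain rule and monotonicity of the spectral shift function one writes $\xi\big(\dott;T_\ell,T_\ell^{(0)}\big)$ as a difference of two sign-definite spectral shift functions. The associated weighted measures are then \emph{nonnegative}, the convergence is established for each piece separately, and for finite positive measures the classical portmanteau/mapping theorem (convergence against bounded functions that are a.e.\ continuous with respect to the limit measure, which here is Lebesgue-absolutely continuous) gives exactly the asserted extension; subtracting the two pieces finishes the proof. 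In other words, the sandwich argument you attempt is sound for positive measures, and the published proof earns the right to use it by first reducing to that case---a reduction your proposal does not make and cannot avoid.
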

%%%%%%%%%%%

%%%%%%%%%%%
\begin{corollary}[Corollary 3.15 in \cite{GN12b}]\lb{cB.15}
Assume Hypothesis \ref{hB.1}. If $g$ is a bounded Borel measurable function that is compactly supported and Lebesgue almost everywhere continuous on $\bbR$, then
\begin{equation}
\lim_{\ell\rightarrow \infty}\int_{-\infty}^{\infty} \xi\big(\lambda; T_{\ell},T_{\ell}^{(0)}\big)  \, g(\lambda) \, \mathrm{d}\lambda
= \int_{-\infty}^{\infty} \xi\big(\lambda;T,T^{(0)}\big) \, g(\lambda)\, \mathrm{d}\lambda.
\end{equation}
\end{corollary}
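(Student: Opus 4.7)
The plan is to invoke the abstract Corollary \ref{cB.15} directly. The stated result is the concrete manifestation of that abstract corollary under the identifications
\begin{equation*}
T^{(0)} = H^{(0)}, \quad T = H, \quad V_1 = u, \quad V_2 = v,
\end{equation*}
\begin{equation*}
T_\ell^{(0)} = H_{\ell,\varphi,R}^{(0)}, \quad T_\ell = H_{\ell,\varphi,R}, \quad V_{1,\ell} = u_\ell, \quad V_{2,\ell} = v_\ell, \quad \ell\in \bbN.
\end{equation*}
Under these identifications, the abstract spectral shift functions $\xi(\,\cdot\,; T, T^{(0)})$ and $\xi(\,\cdot\,; T_\ell, T_\ell^{(0)})$ of Corollary \ref{cB.15} coincide with the concrete objects $\xi(\,\cdot\,; H, H^{(0)})$ and $\xi(\,\cdot\,; H_{\ell,\varphi,R}, H_{\ell,\varphi,R}^{(0)})$ that appear in the statement.

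The entire content of the argument thus reduces to checking that conditions $(i)$--$(ix)$ of Hypothesis \ref{hB.1} hold in the present Schr\"odinger setting. This verification was already carried out in the proof of the preceding theorem in Section \ref{s4}: there, the same nine-tuple of conditions was matched with exactly the identifications listed above, drawing on Lemmata \ref{l2.3}, \ref{l2.4}, \ref{l2.4a}, \ref{l2.10}, \ref{l3.2b}, \ref{l3.4}, \ref{l3.8}, and \ref{l3.10}, together with Remarks \ref{r2.5} and \ref{r2.12} and the form decomposition $\mathfrak{Q} = \mathfrak{Q}^{\free} + \mathfrak{Q}_V$, $\mathfrak{Q}_{\ell,\varphi,R} = \mathfrak{Q}_{\ell,\varphi,R}^{\free} + \mathfrak{Q}_{V_\ell}$. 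Since Corollary \ref{cB.15} asks for no hypothesis beyond Hypothesis \ref{hB.1}, no new trace-ideal estimate, no new convergence result, and no new form-bound is needed.

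Consequently, the proof reduces to a single line: the identifications above place us in the setting of Hypothesis \ref{hB.1}, that hypothesis has already been verified, and so Corollary \ref{cB.15} applies and yields the stated convergence of the unweighted integrals for every compactly supported bounded Borel measurable function $g$ that is Lebesgue almost everywhere continuous on $\bbR$. I do not anticipate a genuine obstacle; the only thing to be careful about is to state the identifications cleanly and to point the reader to the verification of Hypothesis \ref{hB.1} done in the proof of the preceding theorem, so that the invocation of Corollary \ref{cB.15} is unambiguous.
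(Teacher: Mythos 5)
You have misidentified the statement. The result you are asked to prove is the \emph{abstract} Corollary \ref{cB.15} of the appendix, formulated for the operators $T$, $T^{(0)}$, $T_{\ell}$, $T_{\ell}^{(0)}$ of Hypothesis \ref{hB.1}; the spectral shift functions appearing in it are $\xi\big(\dott;T_{\ell},T_{\ell}^{(0)}\big)$ and $\xi\big(\dott;T,T^{(0)}\big)$, not the concrete Schr\"odinger objects $\xi\big(\dott;H_{\ell,\varphi,R},H_{\ell,\varphi,R}^{\free}\big)$ and $\xi\big(\dott;H,H^{\free}\big)$. Your argument---make the identifications $T^{(0)}=H^{(0)}$, $T=H$, $V_1=u$, and so on, check Hypothesis \ref{hB.1}, and then ``invoke the abstract Corollary \ref{cB.15} directly''---is therefore circular: you are deducing Corollary \ref{cB.15} from Corollary \ref{cB.15}. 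What you have actually written is a proof of the last (unnumbered) corollary of Section \ref{s4}, the concrete unweighted convergence for the coupled-boundary-condition Schr\"odinger operators, and for \emph{that} statement your reasoning matches the paper's. But for the statement at hand, Hypothesis \ref{hB.1} is simply \emph{assumed}, so the verification of conditions $(i)$--$(ix)$ in the Schr\"odinger setting, and all of the lemmata you cite, are beside the point.

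As for the abstract corollary itself, the paper offers no proof: it is recalled verbatim from \cite{GN12b} (Corollary 3.15 there). If you want a self-contained argument, the natural route is through Corollary \ref{cB.14}. Since $g$ is bounded, Borel measurable, compactly supported, and Lebesgue a.e.\ continuous, the function $h(\lambda)\coloneqq(1+\lambda^2)g(\lambda)$ is again bounded (here the compact support is essential), Borel measurable, and a.e.\ continuous, so Corollary \ref{cB.14} applied to $h$ yields
\begin{equation*}
\lim_{\ell\to\infty}\int_{-\infty}^{\infty}\frac{\xi\big(\lambda;T_{\ell},T_{\ell}^{(0)}\big)}{1+\lambda^2}\,h(\lambda)\,\mathrm{d}\lambda
=\int_{-\infty}^{\infty}\frac{\xi\big(\lambda;T,T^{(0)}\big)}{1+\lambda^2}\,h(\lambda)\,\mathrm{d}\lambda,
\end{equation*}
which, after cancelling the weight $(1+\lambda^2)^{-1}$ against the factor $(1+\lambda^2)$ in $h$, is precisely the asserted unweighted convergence. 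None of the trace-ideal estimates of Sections \ref{s2}--\ref{s3} enter at this level of generality.
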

%%%%%%%%%%%

%%%%%%%%%%%%%%%%%%%%%%%%%%%%%%%%%%%%%
%%%%%%%%%%%%%%%%%%%%%%%%%%%%%%%%%%%%%

\medskip
 
%%%%%%%%%%%%%%%%%%%%%%%%%%%%%%%%%%%%%
\noindent {\bf Acknowledgments.}
The research of the authors was supported by the National Science Foundation under grant DMS--1852288.
%%%%%%%%%%%%%%%%%%%%%%%%%%%%%%%%%%%%%

%%%%%%%%%%%%%%%%%%%%%%%%%%%%%%%%
%%%%%%%%%%%%%%%%%%%%%%%%%%%%%%%%

\end{document}